\numberwithin{equation}{section}  
\newtheorem{theorem}{Theorem}[section]
\newtheorem{proposition}[theorem]{Proposition}
\newtheorem{coring}[theorem]{Corollary}
\newtheorem{lemma}[theorem]{Lemma}
\DeclareMathAlphabet{\mathpzc}{OT1}{pzc}{m}{it}
\newtheorem{definition}[theorem]{Definition}
\newtheorem{example}[theorem]{Example}
\newtheorem{remark}[theorem]{Remark}
\newtheorem{theoremletter}{Theorem}
\title[On the $\sigma_2$-curvature and Volume of compact manifolds]{On the $\sigma_2$-curvature and Volume of compact manifolds}
\author[M. Andrade]{Maria Andrade}
\address[M. Andrade]{Department of Mathematics, 
	Federal University of Sergipe
	\newline\indent 
	49100-000, Sao Cristov\~ao-SE, Brazil}
\email{\href{mailto: maria@mat.ufs.br}{ maria@mat.ufs.br}}
\author[T. Cruz]{Tiarlos Cruz}
\address[T. Cruz]{ Institute of Mathematics, 
	Federal University of Alagoas
	\newline\indent 
	57072-970, Maceió-AL, Brazil}
\email{\href{mailto: cicero.cruz@im.ufal.br}{cicero.cruz@im.ufal.br}}
\author[A. Silva Santos]{Almir Silva Santos}
\address[A. Silva Santos]{Department of Mathematics, 
	Federal University of Sergipe
	\newline\indent 
	49100-000, Sao Cristov\~ao-SE, Brazil}
\email{\href{mailto: almir@mat.ufs.br}{ almir@mat.ufs.br}}
\thanks{The authors thank the anonymous referee for the valuable suggestions and comments. The first and third authors were partially supported by Brazilian National Council for Scientific and Technological Development (CNPq Grant 403349/2021-4) and FAPITEC/SE/Brazil. The second author was partially supported by the Brazilian National Council for Scientific and Technological Development  (CNPq Grant 405468/2021-0 and 311803/2019-9)}
\subjclass[2020]{53C18, 53C20, 53C21}
\keywords{$\sigma_2$-curvature, critical metrics,  volume functional, volume comparison}
\begin{document}

\maketitle
\begin{abstract}
 In this work we are interested in studying deformations of the $\sigma_2$-curvature and the volume.  For closed manifolds, we relate critical points of the total $\sigma_2$-curvature functional to the $\sigma_2$-Einstein metrics and, as a consequence  of  results  of  M. J. Gursky and J. A. Viaclovsky \cite{MR1872547} and Z. Hu and H. Li \cite{MR2052939},  we obtain  a sufficient and necessary condition  for a critical metric to be Einstein. Moreover,  we show a volume comparison result for Einstein manifolds with respect to $\sigma_2$-curvature  which  shows that the volume can be controlled by the $\sigma_2$-curvature under certain conditions. Next, for compact manifold with nonempty boundary, we study variational properties of the volume functional restricted to the space of metrics with constant $\sigma_2$-curvature and with fixed induced metric on the boundary.  We  characterize the critical points to this functional as the solutions of an equation and show that  in space forms they are  geodesic balls. Studying  second order properties of the volume functional we show that there is a variation for which geodesic balls are indeed local minima in a natural direction. 
\end{abstract}


\section{Introduction}

Given a compact smooth Riemannian manifold $(M^{n},g)$ of dimension $n\geq 3$, there exists an orthogonal decomposition of the curvature tensor $Rm_g$ which is given by $Rm_g=W_g+A_g\odot g$, where $\odot$ is the Kulkarni-Nomizu product, $W_g$ is the Weyl tensor and $A_g$ is the Schouten tensor defined as
\begin{equation}\label{eq050}
    A_g=\frac{1}{n-2}\left(Ric_g-\frac{R_g}{2(n-1)}g\right).
\end{equation}
Here $Ric_g$ and $R_g$ are the Ricci and scalar curvature of
the metric g, respectively, (e.g., see \cite{MR2274812,besse}). For $k\in\{1,\ldots,n\}$, the $\sigma_k$-curvature is defined as 
$$\sigma_k(g):=\sum_{1\leq i_1<\cdots< i_k\leq n}\lambda_{i_1}\cdot\ldots\cdot\lambda_{i_k},$$
where $\lambda_1,\ldots,\lambda_{n}$ are the eigenvalues of $A_g$; that is, $\sigma_k(g)$ is the $k$-th elementary symmetric function of the eigenvalues of $A_g$.

The $\sigma_k$ equation is always elliptic for $k=1$. The $\sigma_1$-curvature is the scalar curvature, up to a dimensional constant, and it was extensively studied along the years. For $k\geq 2$ the picture is quite different. In fact, to assure the ellipticity of the equation we need some additional assumption. For example, a sufficient condition for this is that $g$ is {\it positive} or {\it negative} {\it $k$-admissible}. By definition, a metric $g$ on $M$ is said to be positive $k$-admissible if it belongs to the $k$-th cone $\Gamma_k^+$; this means that a metric $g$ belongs to $\Gamma_k^+$ if and only if $\sigma_i(g)>0$ for all $i=1,\ldots,k$. The negative cone $\Gamma_k^-$ is defined similarly. See \cite[Section 6]{MR1738176} and \cite[Section 3]{MR1764770}.

In the last few decades much attention has been played to the study of the $\sigma_2$-curvature, see for example \cite{MR2104700, MR3828913, MR2015262, MR3663325, MR1738176, MR1872547} and the references therein. 

We are interested to study conformal and non conformal deformations of the $\sigma_2$-curvature. We regard the $\sigma_2$-curvature as a nonlinear map on the space $\mathcal M$ of all Riemannian metrics on $M$,
$$
\sigma_2: \mathcal{M} \rightarrow C^{\infty}(M);\quad g \mapsto \sigma_2(g).
$$
See \eqref{eq034} for an explicit expression of the $\sigma_2$-curvature in terms of the Ricci and scalar curvature.

In order to understand the behavior of this map  we  consider its linearization at a metric $g$ given by  $\Lambda_{g}: S_{2}(M) \rightarrow C^{\infty}(M)$ and also its $L^{2}$-formal adjoint, denoted by $\Lambda_{g}^{*}: C^{\infty}(M) \rightarrow S_{2}(M)$. Here $S_{2}(M)$ is the space of symmetric 2-tensors on $M$. It was proved in \cite{MR3828913} that $\Lambda_g^*(1)$ give us a 2-tenson canonically  associated with the $\sigma_2$-curvature (see Section \ref{sec-background}). In this way we say that a metric $g$ is $\sigma_2$-{\it Einstein} if $\Lambda_g^*(1)=\kappa g$ for some constant $\kappa$. 

J. A. Viaclovsky \cite{MR1738176} has considered the following  functional, defined in $\mathcal M$,
\begin{equation}\label{functionalv}
\mathcal F_k(g)=\int_M\sigma_k(g)dv_g.
\end{equation}
He has proved that for $k\not=n/2$ and for $(M,g)$ locally conformally flat in case $k\geq 3$, then the Euler-Langrange equation of $\mathcal F_k$ restricted to a conformal class of a metric $g$ is $\sigma_k(g)=$ constant. Later S. Brendle and J. A. Viaclovsky \cite{MR2071927} extended this result for $k=n/2$. Also, it is well known that  critical metrics of $\mathcal F_1$ restricted to the space  of unit volume metrics are exactly Einstein metrics \cite[Theorem 4.21]{besse}. For $k>1,$ the problem is more intriguing since the   Euler-Lagrange equations are  fourth order equations in the metric. In fact,  T. P. Branson and A. R. Gover \cite{MR2389992} have studied the functional $\mathcal F_k$ in a conformal class. While the equation $\sigma_1(g)=$ constant is always variational, they have shown that for $k\in\{3,\ldots,n\}$ the equation $\sigma_k(g)=$ constant is variational if and only if the manifold is locally conformally flat. However, in dimension three it holds an interesting result proved by M. J. Gursky and J. A. Viaclovsky \cite{MR1872547}.
\begin{theorem}[M. J. Gursky, J. A. Viaclovsky]\label{gursky-viaclovsky}
Let  $(M^3,g)$ be a closed manifold of dimension three. Then a metric $g$ with $\mathcal F_2(g) \geq 0$ is critical for $\mathcal F_2$ restricted to the space of unit volume metrics if and only if $g$ is Einstein and therefore  has constant sectional curvature. 
\end{theorem}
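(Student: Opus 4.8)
The plan is to carry out the standard Lagrange--multiplier analysis of the constrained variational problem and then to exploit the very special algebra of curvature in dimension three. Writing $g_t$ for a variation with $\dot g=h$, the first variation of $\mathcal F_2$ splits as $\delta\mathcal F_2(h)=\int_M\Lambda_g(h)\,dv_g+\tfrac12\int_M\sigma_2(g)\langle g,h\rangle\,dv_g=\int_M\langle \Lambda_g^{\ast}(1)+\tfrac12\sigma_2(g)g,\,h\rangle\,dv_g$, the second term coming from the variation of the volume element. Since the tangent space to the unit--volume metrics is $\{h:\int_M\langle g,h\rangle\,dv_g=0\}$, a metric $g$ is critical if and only if $\Lambda_g^{\ast}(1)+\tfrac12\sigma_2(g)g=c\,g$ for a constant $c$; equivalently $g$ is $\sigma_2$-Einstein. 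I will write $\nabla\mathcal F_2:=\Lambda_g^{\ast}(1)+\tfrac12\sigma_2(g)g$ for the gradient tensor, so that the Euler--Lagrange equation reads $\nabla\mathcal F_2=c\,g$.

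Next I would pin down the multiplier and reduce to dimension three. Integrating the trace of the Euler--Lagrange equation and using that $\sigma_2$ is homogeneous of degree $-2$ under scaling (so that $\int_M\langle g,\nabla\mathcal F_2\rangle\,dv_g=(\tfrac n2-2)\mathcal F_2(g)$), one gets $3c=-\tfrac12\mathcal F_2(g)$, hence $c=-\tfrac16\mathcal F_2(g)\le 0$ under the hypothesis $\mathcal F_2(g)\ge 0$. In dimension three the Weyl tensor vanishes, so the full curvature tensor is an explicit expression in $Ric_g$ and $R_g$; substituting this into the known first--variation formulas for $\int_M R_g^2\,dv_g$ and $\int_M|Ric_g|^2\,dv_g$ and using the identity $\sigma_2(g)=\tfrac{3}{16}R_g^2-\tfrac12|Ric_g|^2=\tfrac{1}{48}R_g^2-\tfrac12|\mathring{Ric}_g|^2$, I obtain a concrete (fourth order) expression for $\nabla\mathcal F_2$ in terms of $\mathring{Ric}_g$, $R_g$ and their covariant derivatives.

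The heart of the argument is then an integral identity. Because the right--hand side of $\nabla\mathcal F_2=c\,g$ is pure trace, contracting with the trace--free Ricci tensor annihilates the multiplier: $\int_M\langle\nabla\mathcal F_2,\mathring{Ric}_g\rangle\,dv_g=c\int_M\operatorname{tr}_g\mathring{Ric}_g\,dv_g=0$. Expanding the left--hand side, integrating by parts, and repeatedly using the contracted second Bianchi identity in its three--dimensional form $\operatorname{div}_g Ric_g=\tfrac12\,dR_g$, equivalently $\operatorname{div}_g\mathring{Ric}_g=\tfrac16\,dR_g$, I expect to convert every derivative term into a manifestly nonnegative quantity such as $\int_M|\nabla\mathring{Ric}_g|^2\,dv_g$ and $\int_M|\nabla R_g|^2\,dv_g$, leaving a cubic curvature remainder. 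The constraint $\mathcal F_2(g)\ge 0$, i.e. $\tfrac{1}{48}R_g^2\ge\tfrac12|\mathring{Ric}_g|^2$ in the mean, is exactly what controls the sign of this remainder, so that the whole integrand is nonnegative while its integral vanishes. One then concludes $\nabla\mathring{Ric}_g\equiv 0$ and, from the algebraic remainder, $\mathring{Ric}_g\equiv 0$; that is, $g$ is Einstein. In dimension three an Einstein metric has parallel Ricci tensor and, after substitution into the explicit curvature formula, constant sectional curvature. The converse is immediate: for a constant--curvature metric all derivative terms in $\nabla\mathcal F_2$ vanish and the algebraic terms are multiples of $g$, so $\nabla\mathcal F_2=c\,g$ holds and $\sigma_2(g)=\tfrac{1}{48}R_g^2\ge 0$.

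The main obstacle I anticipate is the third step: organizing the fourth--order Euler--Lagrange tensor so that, after contraction with $\mathring{Ric}_g$ and integration by parts, every derivative term is collected into a nonnegative square and the remaining algebraic cubic term has a sign compatible with $\mathcal F_2(g)\ge 0$. This is precisely the point where the three--dimensional curvature identity and the positivity hypothesis are both indispensable; without $\mathcal F_2(g)\ge 0$ one cannot rule out non--Einstein critical metrics.
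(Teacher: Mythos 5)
The paper does not prove this statement: it is quoted from Gursky--Viaclovsky \cite{MR1872547}, so you were in effect asked to reconstruct their argument. Your first two steps are correct and match the setup the paper itself uses when it derives the Euler--Lagrange equation of $\hat{\mathcal F_2}$ in the proof of Theorem A: criticality on unit-volume metrics is equivalent to $\Lambda_g^*(1)+\tfrac12\sigma_2(g)g=c\,g$, and the scaling argument gives $c=-\tfrac16\mathcal F_2(g)\le 0$. The converse direction and the final observation that a three-dimensional Einstein metric has constant sectional curvature are also fine.

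The gap is that the entire content of the theorem sits in your third step, and there you only describe what you ``expect'' to happen. Concretely: (i) you use $\mathcal F_2(g)\ge 0$ only ``in the mean'', but to control the cubic remainder (terms such as $\int_M\operatorname{tr}(\mathring{Ric}_g^{\,3})\,dv_g$ and $\int_M R_g|\mathring{Ric}_g|^2dv_g$, which have no sign) one needs the pointwise inequality $\tfrac1{48}R_g^2\ge\tfrac12|\mathring{Ric}_g|^2$; this is available, but only after noting that the pointwise trace of the Euler--Lagrange equation, via $tr_g\Lambda_g^*(1)=-2\sigma_2(g)$, forces $\sigma_2(g)\equiv-6c\ge 0$ to be constant --- a step you skip by integrating the trace instead of reading it pointwise. (ii) Even granted the pointwise bound, the cubic remainder is not ``manifestly nonnegative''; making it so is precisely the hard part of \cite{MR1872547}, which requires a sharp algebraic inequality for trace-free symmetric $2$-tensors in dimension three and careful bookkeeping of the constants, together with a verification that the competing derivative terms (e.g.\ the $-\mathrm{const}\int_M|\nabla R_g|^2dv_g$ contribution produced by $\nabla^2(fR_g)$ in $\Lambda_g^*$ through $\operatorname{div}\mathring{Ric}_g=\tfrac16\,dR_g$) are dominated by $\int_M|\nabla\mathring{Ric}_g|^2dv_g$. (iii) Your concluding step ``$\nabla\mathring{Ric}_g\equiv0$ and, from the remainder, $\mathring{Ric}_g\equiv0$'' is not automatic: parallel trace-free Ricci does not imply Einstein (products are the standard counterexample), so one must show the algebraic remainder is strictly positive unless $\mathring{Ric}_g=0$. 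As written, the proposal is a plausible plan whose decisive estimates are asserted rather than carried out.
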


They observed that the condition $\mathcal F_2(g)\geq 0$ is necessary, since we can write the $\sigma_2$-curvature  as $\sigma_2(g)=-\frac{1}{2}|\mathring{Ric}_g|^2+\frac{1}{48}R_g^2,$
where $\mathring{Ric}_g=Ric-\frac{1}{3}R_gg$ denotes the trace free Ricci tensor. Also, they have proved that there exists a Berger metric on $\mathbb S^3$ which is a critical metric with $\mathcal F_2(g)<0$. Later, Z. Hu and H. Li \cite{MR2052939} generalized this result for $n > 4$, in the case that the  metric is locally conformally flat. G. Catino, P. Mastrolia and D. D. Monticelli \cite{MR3478936} have extended this result to the noncompact setting. They obtained that a complete critical metric for $\mathcal F_2$ with non-negative scalar curvature is flat. The non-negativity condition on the scalar curvature cannot be removed.

We remark that  Theorem \ref{gursky-viaclovsky} cannot hold in four-dimensional manifolds on account of the specificity of this dimension. Indeed, in this dimension we have the\textit{ Gauss-Bonnet-Chern Formula}
\begin{equation}\label{Gauss-Bonnet-int}
    \int_M \left(\sigma_2(g)+\frac{|W_g|^2}{4}\right)
\;dv_g=8\pi^2\chi(M),
\end{equation}
where $\chi(M)$ denotes the Euler characteristic of $M$. This formula  serves as a ``bridge" between the topological and geometric information, which implies that under conformal change of metrics, since $|W_g|^2dv_g$ is point-wisely conformally invariant, the integral $\int_M\sigma_2(g)dv_g$ is conformally invariant.

As a consequence of the results by M. J. Gursky and J. A. Viaclovsky \cite{MR1872547} and Z. Hu and H. Li \cite{MR2052939} we obtain our first result, which reads as follows.

\begin{theoremletter}\label{theorem-A}
Let $(M^{n},g)$ be a closed Riemannian manifold of dimension $n\geq 3$, $n\not=4$, which is locally conformally flat for $n\geq 5$. Then $(M^n,g)$ is a $\sigma_2$-Einstein manifold with $\sigma_2(g)\geq 0$ if and only if $(M^n,g)$ is an Einstein manifold.
\end{theoremletter}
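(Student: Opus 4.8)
The plan is to translate the $\sigma_2$-Einstein condition into the statement that $g$ is a critical point of $\mathcal F_2$ restricted to the unit-volume metrics, and then invoke Theorem \ref{gursky-viaclovsky} (for $n=3$) together with the result of Z. Hu and H. Li (for $n\ge 5$, locally conformally flat). The bridge between the two conditions is the first variation of $\mathcal F_2$. First I would record, from Section \ref{sec-background}, the first variation formula
\[
D\mathcal F_2(g)[h]=\int_M\left\langle \Lambda_g^*(1)+\tfrac12\sigma_2(g)\,g,\;h\right\rangle dv_g ,
\]
obtained by differentiating $\sigma_2$ and the volume element $\tfrac12(\operatorname{tr}_g h)\,dv_g$ and passing the scalar $1$ through the adjoint, i.e. $\int_M \Lambda_g(h)\,dv_g=\int_M\langle h,\Lambda_g^*(1)\rangle\,dv_g$. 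Writing $G:=\Lambda_g^*(1)+\tfrac12\sigma_2(g)\,g$ for the $L^2$-gradient of $\mathcal F_2$, a Lagrange-multiplier argument shows that $g$ is critical for $\mathcal F_2$ among unit-volume metrics precisely when $G=\lambda g$ for some constant $\lambda$.

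The key geometric input is that $G$ is divergence free. This follows from the diffeomorphism invariance of $\mathcal F_2$: testing the first variation against $h=\mathcal L_X g=2\,\delta_g^* X$ and integrating by parts on the closed manifold $M$ gives $\int_M\langle \operatorname{div}_g G,\,X^\flat\rangle\,dv_g=0$ for every vector field $X$, whence $\operatorname{div}_g G=0$. Now suppose $g$ is $\sigma_2$-Einstein, i.e. $\Lambda_g^*(1)=\kappa g$ with $\kappa$ constant. Then $G=(\kappa+\tfrac12\sigma_2(g))\,g$ is pure trace, and $\operatorname{div}_g G=\tfrac12\,d\sigma_2(g)=0$ forces $\sigma_2(g)$ to be constant; consequently $G=\lambda g$ with $\lambda$ constant, so $g$ is critical for $\mathcal F_2$ on unit-volume metrics. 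Since $\sigma_2(g)$ is then a nonnegative constant by hypothesis, $\mathcal F_2(g)=\sigma_2(g)\,\mathrm{Vol}(M,g)\ge 0$, and Theorem \ref{gursky-viaclovsky} when $n=3$, respectively the Hu--Li theorem when $n\ge5$ (using local conformal flatness), applies and yields that $g$ is Einstein.

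For the converse I would start from an Einstein metric $g$, so that $A_g=\mu g$ with $\mu=\frac{R_g}{2n(n-1)}$ constant by Schur's lemma; hence $\sigma_2(g)=\binom{n}{2}\mu^2$ is a nonnegative constant and $\mathcal F_2(g)\ge 0$. The same theorems of Gursky--Viaclovsky and Hu--Li, read in the reverse direction, show that such a $g$ is critical for $\mathcal F_2$ restricted to unit-volume metrics, i.e. $G=\lambda g$ for a constant $\lambda$. Solving for $\Lambda_g^*(1)=G-\tfrac12\sigma_2(g)\,g=(\lambda-\tfrac12\binom{n}{2}\mu^2)\,g$ then exhibits $\Lambda_g^*(1)$ as a constant multiple of $g$, so $g$ is $\sigma_2$-Einstein with $\sigma_2(g)\ge0$, completing the equivalence.

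I expect the main obstacle to be the bookkeeping that makes the equivalence ``$\sigma_2$-Einstein $\Leftrightarrow$ critical for $\mathcal F_2$ on unit-volume metrics'' exact, and in particular the step that promotes $\sigma_2(g)$ from an a priori variable function to a constant. This is precisely where the divergence-free identity for the gradient $G$ is essential: without it, $\Lambda_g^*(1)=\kappa g$ gives only that $G$ is proportional to $g$ by a function, not by a constant, and hence not yet criticality. Some care is also needed to confirm that in the relevant settings (dimension three, or locally conformally flat with $n\ge5$) the metrics singled out as ``Einstein'' coincide with the critical metrics produced by the cited theorems, so that both directions close up.
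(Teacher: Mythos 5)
Your proposal is correct and follows essentially the same route as the paper: both show that a $\sigma_2$-Einstein metric has constant $\sigma_2$-curvature (your divergence-free gradient argument is exactly the identity $\delta\Lambda_g^*(1)=\tfrac12\,d\sigma_2(g)$ from \eqref{eqn003}), identify it as a volume-normalized critical point of the total $\sigma_2$-curvature, and then invoke Gursky--Viaclovsky and Hu--Li. The only cosmetic differences are that the paper works with the scale-invariant functional $\hat{\mathcal F}_2(g)=V(g)^{\frac{4-n}{n}}\int_M\sigma_2(g)\,dv_g$ rather than a Lagrange multiplier on the unit-volume slice, and it obtains the converse by the direct computation $\Lambda_g^*(1)=-\tfrac{2}{n}\sigma_2(g)\,g$ for Einstein $g$ (Lemma \ref{lem007}) instead of reading the cited theorems in reverse.
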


We will see  that $\sigma_2$-Einstein metrics are {\it critical metrics} of the volume functional, justifying our interest. Moreover, we present applications of Theorem \ref{theorem-A} in the second variation formula of Viaclovsky's functional  \eqref{functionalv} for $k=2$. To this end, we compute the formula for the second derivative of the
$\sigma_2$-curvature (Proposition \ref{lem004}). It should be mentioned that  this formula is of independent interest and we hope that can be useful in other contexts.

We recall that a closed Einstein manifold $(M^n, g)$ with dimension $n\geq 3$ is said to be stable if the {\it Einstein operator}
\begin{equation}\label{eq004}
    \Delta_{E}^g:=\nabla^{*} \nabla-2 \mathring{R}: \Gamma\left(S^{2} M\right) \rightarrow \Gamma\left(S^{2} M\right)
\end{equation}
 restricted to $S_{2,g}^{TT}(M):=\{h \in S_2(M): \delta_g h = 0, tr_gh  = 0\}$
 is nonnegative, i.e, if there exists $\lambda\geq 0$ such that $\left\langle\Delta_E^gh,h \right\rangle\geq\lambda\|h\|^2,$
 for any $h\in S_{2,g}^{TT}(M)$. If $\lambda>0$, then the metric is said {\it strictly stable}, see \cite[Definition 4.63]{besse}. Otherwise, $g$ is said {\it unstable}. Here $\mathring R(h)_{ij}=g^{kl}g^{st}R_{kijs}h_{lt}$ for any $h\in S_2(M)$. Stability of Einstein manifolds plays a fundamental role in the study of Einstein manifolds, see for instance \cite{besse} and references therein. The space $S_{2,g}^{TT}(M)$ is often called the space of transverse-traceless tensors, or TT tensors for short.

 We present a geometric characterization of $\sigma_2$-curvature showing a volume comparison theorem  for metrics close to strictly stable  Einstein metrics. It should be mentioned that a similar question was first addressed in the context of scalar curvature and $Q$-curvature by W. Yuan\cite{yuan2016volume} and Y. Lin and W. Yuan in \cite{lin2021deformations}, respectively.

\begin{theoremletter}\label{theorem-B}
For $n\geq 3$, suppose $(M^{n},g_0)$ is an $n$-dimensional closed strictly stable Einstein manifold with Ricci curvature
$
Ric_{g_0} = (n-1)\lambda g_0,$
where $\lambda >0$ is a constant. 
Then there exists a constant $\varepsilon_0 > 0$ such that for  any metric $g$ on $M$ satisfying
$$\sigma_2(g)\geq \sigma_2(g_0)\qquad\mbox{ and }\qquad\|g - g_0\|_{C^2}<\varepsilon_0,$$
we have the following volume comparison 
$$V(g) \leq V(g_0),$$
with the equality holding if and only if $g$ is isometric to $g_0$. 
\end{theoremletter}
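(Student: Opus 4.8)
The plan is to show that $g_0$ is a strict local maximum of the volume among metrics satisfying $\sigma_2(g)\ge\sigma_2(g_0)$, by combining a conformal reduction with a second-variation estimate governed by strict stability. First I would record that $g_0$ lies in the interior of the admissible cone: since $Ric_{g_0}=(n-1)\lambda g_0$ with $\lambda>0$, the Schouten tensor is $A_{g_0}=\frac{\lambda}{2}g_0$, so all its eigenvalues equal $\frac{\lambda}{2}>0$, whence $g_0\in\Gamma_2^+$ and $\sigma_2(g_0)=\binom{n}{2}\left(\frac{\lambda}{2}\right)^2>0$. In particular $\sigma_2(g_0)\ge 0$, so Theorem~\ref{theorem-A} applies and $g_0$ is $\sigma_2$-Einstein, i.e. $\Lambda_{g_0}^*(1)=\kappa g_0$ for a constant $\kappa$. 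This is exactly the Euler--Lagrange equation identifying $g_0$ as a critical point of $V$ restricted to the constraint $\{\sigma_2=\sigma_2(g_0)\}$, and it is the quadratic form at this critical point that I must control.

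Second, I would reduce the pointwise inequality to an equality constraint by a conformal deformation. Because $g_0\in\Gamma_2^+$ and admissibility is an open condition, every $g$ with $\|g-g_0\|_{C^2}<\varepsilon_0$ also lies in $\Gamma_2^+$, so the fully nonlinear equation $\sigma_2(e^{2\phi}g)=\sigma_2(g_0)$ is elliptic; using a sub/supersolution or continuity argument together with the implicit function theorem at $g_0$, I would produce a small conformal factor $\phi$ with $\hat g=e^{2\phi}g$ satisfying $\sigma_2(\hat g)\equiv\sigma_2(g_0)$. The monotonicity needed here is that lowering $\sigma_2$ pointwise within a conformal class enlarges the volume (as on round spheres, where larger $\sigma_2$ forces smaller radius and hence smaller volume), so that $\sigma_2(g)\ge\sigma_2(g_0)$ yields $V(\hat g)\ge V(g)$. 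This reduces the theorem to proving $V(\hat g)\le V(g_0)$ under the equality constraint $\sigma_2(\hat g)=\sigma_2(g_0)$.

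Third, I would establish $V(\hat g)\le V(g_0)$ by a second-variation argument. After pulling back by a suitable diffeomorphism (Ebin's slice theorem) I may assume $h=\hat g-g_0$ lies in a fixed gauge orthogonal to the orbit of the diffeomorphism group, and decompose $h$ into its transverse-traceless part in $S_{2,g_0}^{TT}(M)$ and a conformal (pure-trace) part. Expanding both $V$ and $\sigma_2$ to second order in $h$ (using the formula for the second derivative of $\sigma_2$ from Proposition~\ref{lem004}) and invoking $\Lambda_{g_0}^*(1)=\kappa g_0$ to cancel the first-order terms on the constraint surface, the problem becomes showing that the resulting Hessian of $V$ is negative definite. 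On the TT part this is immediate from strict stability: the bound $\langle\Delta_E^{g_0}h,h\rangle\ge\lambda\|h\|^2$ controls the relevant quadratic form with the correct sign. The conformal part must be treated separately: the linearization of $\sigma_2$ in the conformal direction is a second-order elliptic operator whose sign is governed by $\sigma_2(g_0)>0$ and $\lambda>0$, so the equality constraint $\sigma_2(\hat g)=\sigma_2(g_0)$ solves the trace part in terms of the TT part plus higher-order terms and forces its contribution to have the favorable sign as well.

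The main obstacle is precisely this interaction between the conformal and the TT directions. Strict stability only supplies information on transverse-traceless tensors, so the crux is to show that, once the gauge is fixed and the equality constraint imposed, the conformal degrees of freedom cannot contribute positively to the volume Hessian; this is where positivity of $\lambda$ and of $\sigma_2(g_0)$ enter decisively, and where the fourth-order nature of the $\sigma_2$ linearization makes the mapping properties and estimates delicate. Finally, the equality case follows by tracking saturation of the above inequalities: the Hessian degenerates only along null directions, which by strict stability and the gauge condition are generated by diffeomorphisms, forcing $g$ to be isometric to $g_0$.
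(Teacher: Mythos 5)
Your plan diverges from the paper's argument at its very first substantive step, and that step contains a genuine gap. The paper never reduces the pointwise inequality $\sigma_2(g)\geq\sigma_2(g_0)$ to an equality constraint by a conformal deformation; instead it introduces the scale-invariant functional $\mathcal E_{g_0}(g)=V(g)^{4/n}\int_M\sigma_2(g)\,dv_{g_0}$ with the volume form \emph{frozen} at $g_0$, so that the two hypotheses $\sigma_2(g)\geq\sigma_2(g_0)$ and $V(g)\geq V(g_0)$ combine directly into $\mathcal E_{g_0}(g)\geq\mathcal E_{g_0}(g_0)$, with no monotonicity lemma needed. Your claimed monotonicity --- that solving $\sigma_2(e^{2\phi}g)=\sigma_2(g_0)\le\sigma_2(g)$ forces $\phi\geq0$ and hence $V(\hat g)\geq V(g)$ --- is justified only by the case of constant conformal factors on the round sphere. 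For non-constant $\phi$ one would need a comparison principle for the fully nonlinear equation \eqref{eq047}; evaluating that equation at an interior minimum of $\phi$ (where $\nabla\phi=0$, $\nabla^2\phi\geq0$, and $\mathcal I_g(\phi)\geq0$) yields an inequality that is automatically consistent with $\phi<0$, so the naive maximum principle does not close this step. As stated, the reduction is unsupported, and it is not how any of the cited volume-comparison arguments (Yuan, Lin--Yuan) proceed.

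Two further points would still need repair even granting the reduction. First, negative semi-definiteness of the Hessian at a critical point does not by itself yield a local maximum in a $C^2$-neighborhood in this infinite-dimensional setting; the paper needs the full machinery of Proposition \ref{proposb} --- the weak Riemannian structure \eqref{scalarprod}, the gradient vector field $Y$, the isomorphism $DY_{g_0}$ on the complement $\mathcal C_{g_0}$, and Fischer--Marsden's Lemma 5 --- to convert the second-variation information into the rigidity statement. Second, your equality analysis asserts that the null directions of the Hessian are generated by diffeomorphisms; this is false. By Proposition \ref{propo004} the kernel contains the scaling direction $\mathbb R g_0$ (and, on the round sphere, $E_\lambda\cdot g_0$), which are not diffeomorphism directions. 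The paper disposes of the residual scaling freedom only at the very end, by playing the volume inequality ($c\geq1$) against the curvature inequality ($c\leq1$) for $\varphi^*g=c^2g_0$. Your identification of where strict stability and the Lichnerowicz--Obata bound enter (TT part versus conformal part of the Hessian, as in Corollary \ref{cor004}) is on target, but the argument as proposed does not go through without these three repairs.
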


Since the round sphere is strictly stable (see for instance \cite[Section 3.1]{Kroncke}), we obtain the following immediate result.

\begin{coring}\label{volsi}
	Let $(\mathbb{S}^{n},g_0)$ be the round sphere, with $n\geq 3$. Then there exists a constant $\varepsilon_0 > 0$ such that for any metric $g$ on $\mathbb{S}^n$ satisfying
	$$\sigma_2(g)\geq \frac{n(n-1)}{8}\qquad\mbox{ and }\qquad\|g - g_0\|_{C^2}<\varepsilon_0,$$
we have 
	$$V(\mathbb S^n,g) \leq V(\mathbb{S}^{n},g_0),$$ with the equality holding if and only if $g$ is isometric to $g_0$.
\end{coring}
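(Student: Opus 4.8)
The plan is to derive Corollary \ref{volsi} directly from Theorem \ref{theorem-B} by checking that the round sphere $(\mathbb{S}^n,g_0)$ satisfies the two hypotheses of that theorem: it is strictly stable, and its Ricci curvature has the normalized form $Ric_{g_0}=(n-1)\lambda g_0$ with $\lambda>0$. First I would recall that for the round sphere of constant sectional curvature $1$ we have $Ric_{g_0}=(n-1)g_0$, so that $\lambda=1>0$; this is precisely the positivity condition required in Theorem \ref{theorem-B}. Strict stability of the round sphere is a classical fact and is quoted in the excerpt from \cite[Section 3.1]{Kroncke}, so I would simply invoke that reference to conclude that the Einstein operator $\Delta_E^{g_0}$ restricted to $S_{2,g_0}^{TT}(\mathbb{S}^n)$ is strictly positive.

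The only genuine computation is to verify that the threshold constant appearing in the volume-comparison hypothesis, namely $\sigma_2(g_0)$, equals $n(n-1)/8$ for the round sphere. To see this I would use the explicit formula for the Schouten tensor \eqref{eq050}. For the round sphere with $Ric_{g_0}=(n-1)g_0$ and $R_{g_0}=n(n-1)$, the Schouten tensor becomes
\begin{equation*}
A_{g_0}=\frac{1}{n-2}\left(Ric_{g_0}-\frac{R_{g_0}}{2(n-1)}g_0\right)=\frac{1}{n-2}\left((n-1)-\frac{n}{2}\right)g_0=\frac{1}{2}\,g_0,
\end{equation*}
so every eigenvalue of $A_{g_0}$ equals $\tfrac12$. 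Consequently $\sigma_2(g_0)$, being the second elementary symmetric function of these $n$ equal eigenvalues, is $\binom{n}{2}\left(\tfrac12\right)^2=\tfrac{n(n-1)}{2}\cdot\tfrac14=\tfrac{n(n-1)}{8}$, which matches the stated constant.

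With both hypotheses verified, Theorem \ref{theorem-B} applies to $(\mathbb{S}^n,g_0)$: there is an $\varepsilon_0>0$ such that any metric $g$ with $\sigma_2(g)\geq\sigma_2(g_0)=n(n-1)/8$ and $\|g-g_0\|_{C^2}<\varepsilon_0$ satisfies $V(\mathbb{S}^n,g)\leq V(\mathbb{S}^n,g_0)$, with equality precisely when $g$ is isometric to $g_0$. This is exactly the claimed conclusion, so the corollary follows.

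I do not anticipate any real obstacle here, since the statement is a direct specialization of Theorem \ref{theorem-B}; the substantive content lives entirely in that theorem. The only points requiring care are the normalization conventions—confirming that the constant sectional curvature $1$ sphere indeed has $\lambda=1$ and that the elementary-symmetric-function computation uses the correct eigenvalue $\tfrac12$—and making sure that the quoted strict stability in \cite{Kroncke} uses the same sign convention for the Einstein operator \eqref{eq004} as adopted in this paper.
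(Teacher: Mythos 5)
Your proposal is correct and follows the same route as the paper, which deduces the corollary immediately from Theorem \ref{theorem-B} using the strict stability of the round sphere quoted from \cite[Section 3.1]{Kroncke}; your explicit verification that $\sigma_2(g_0)=\tbinom{n}{2}(1/2)^2=n(n-1)/8$ (equivalently, via \eqref{eq007}, $\sigma_2(g_0)=R_{g_0}^2/(8n(n-1))$) is consistent with the paper's conventions and fills in the one computation the paper leaves implicit.
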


After we finished this paper, we learned that Y. Fang, Y. He and J. Zhong \cite{Fang-He-Zhong} proved independently the same comparison theorem. They found examples showing that the strictly stable assumption cannot be dropped in Theorem \ref{theorem-B}. Also they studied the Ricci-flat case.

Motivated by the characterization of Einstein metrics as critical points of the volume functional restricted to the space of metrics with constant scalar curvature $-1$ on a closed manifold, P. Miao and L.-F. Tam \cite{MR2546025} have shown necessary and sufficient conditions to a metric be a critical point of the volume functional restricted to the space of all Riemannian metric with constant scalar curvature and, if the manifold possesses a boundary, the   induced metric on the boundary is fixed.  
The condition is related with the $L^2$-formal adjoint of the linearization of the scalar curvature. In fact, in the literature these critical points are the so-called  $V$-{\it static} metrics. 
Such a  concept was introduced by  P. Miao and L.-F. Tam  \cite{MR2546025}  and are  useful as an attempt to better understand the interplay between scalar curvature and volume. Later, in \cite{miao2011einstein}, they classified all Einstein or conformally flat metrics which are critical points for the volume functional restricted to the above space. See also the works \cite{miao2011einstein,MR2546025,MR3096517,baltazar2017critical,MR3427144} and references therein.

Before we state our next result, let us define the following self-adjoint operator in the $L^2$-norm $\mathcal T_g:C_0^\infty(M)\rightarrow C_0^\infty(M)$, which is related with the conformal change of the $\sigma_2$-curvature \eqref{eq047}, as
\begin{equation}\label{eq049}
\mathcal T_g(u):=\frac{1}{2}\langle T_1,\nabla^2u\rangle+2u\sigma_2(g)=\frac{1}{2}div\left(T_1(\nabla u)\right)+2u\sigma_2(g),
\end{equation}
where $C^\infty_0(M)$ is the space of smooth functions defined in $M$ which are equal to zero on the boundary, $T_1$ is the first Newton transformation associated with $A_g$ and it is given by
\begin{equation}\label{eq018}
T_1=\frac{1}{n-2}\left(\frac{1}{2}R_gg-Ric_g\right).    
\end{equation}
We notice that by the contracted second Bianchi identity, $T_1$ is divergence free. 
It is well known that if $g$ is a positive 2-admissible metric, i.e., it belongs to the positive elliptic convex 2-cone, 
$\Gamma_2^+:=\{g\in\mathcal M;\sigma_1(g)>0\mbox{ and }\sigma_2(g)>0\}$,
then $T_1$ is positive definite (See \cite{MR806416,MR0113978} and \cite[Proposition 2.1]{MR2015262}). This implies that the operator $\mathcal T_g$ is elliptic.  Moreover, if $g\in \overline{\Gamma_2^+}$, then $T_1\geq 0$. 
 
Let $\mathcal{M}^{K}$ be the space of metrics with constant $\sigma_2$-curvature $K\neq 0$. The critical points of the volume functional restricted to $\mathcal{M}^{K}$ are precisely stationary points of $\mathcal{F}_2$ (see \eqref{functionalv}) restricted to $\mathcal{M}^{K}$.  It was proved in  \cite[Theorem 6.2]{case2019conformally}, under some assumptions,  that a metric $g$ is critical point of the volume functional restricted to the space of metrics with constant $\sigma_2$-curvature if and only if there is a function $f\in C^\infty(M)$ such that $\Lambda^*_g(f)=g$. More generally,  J. S. Case, Y.-J. Lin and W. Yuan \cite{case2019conformally} have defined a {\it conformally variational invariant} (CVI) as a conformally Riemannian scalar invariant which is homogeneous and has a conformal primitive. One recall that CVIs generalizes concepts as scalar curvature, Branson’s Q-curvature and $ \sigma_2 $-curvature. They extended to the CVI context, for closed manifold, some well known results for scalar curvature (see also \cite{Case_Lin_Yuan_2020}).

 Inspired by the above discussion and the V-static works, we also establish a similar result for the $\sigma_2$-curvature on compact manifolds with boundary whose proof contrasts sharply with the closed case due to the nature of the issue.  We investigate critical points of the volume functional on the space  of metrics  which have constant $\sigma_2$-curvature $K$ and fixed metric $\gamma$ on the boundary, i.e, 
$$
 \mathcal M_{\gamma}^K=\{g\in \mathcal M: \sigma_2(g)=K\quad\mbox{and}\quad g|_{T\partial M}=\gamma\}.
$$ 
In addition,  we show that these critical  points give a class of manifolds with very nice properties.

\begin{theoremletter}\label{theorem-C}
Let $(M^n,g)$ be a compact Riemannian manifold of dimension $n\geq 3$ with nonempty boundary. Let $g$ be a 2-admissible metric such that  the first Dirichlet eigenvalue of $-\mathcal T_g$ is positive. Then, $g$ is a critical point of the volume functional $V:{\mathcal{M}_{\gamma}^K}\rightarrow\mathbb R$ if and only if there exists a smooth function $f$ on $M$ such that $f  =  0$  on $\partial M$ and 
\begin{equation}\label{eq0100}
\Lambda_g^*(f) =  g  \text{ in }M.
\end{equation}
\end{theoremletter}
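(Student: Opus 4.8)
The plan is to treat this as a constrained variational problem and apply the method of Lagrange multipliers, with the positivity of the first Dirichlet eigenvalue of $-\mathcal{T}_g$ supplying the nondegeneracy needed to produce the multiplier $f$. Recall that the first variation of volume in a direction $h=\frac{d}{dt}g_t|_{t=0}$ is $V'(g)h=\frac{1}{2}\int_M \mathrm{tr}_g h\,dv_g=\frac{1}{2}\int_M\langle g,h\rangle\,dv_g$. A curve $g_t\in\mathcal{M}_\gamma^K$ forces its tangent vectors to satisfy $\Lambda_g h=0$ (differentiating $\sigma_2(g_t)=K$) and $h|_{T\partial M}=0$ (differentiating the boundary constraint). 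Hence $g$ is critical precisely when $\int_M\langle g,h\rangle\,dv_g=0$ for every $h\in S_2(M)$ with $\Lambda_g h=0$ and $h|_{T\partial M}=0$; the whole theorem is the assertion that this orthogonality is equivalent to $g$ lying in the image of $\Lambda_g^*$ restricted to functions vanishing on $\partial M$.

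For the sufficiency direction I would assume $\Lambda_g^*(f)=g$ with $f|_{\partial M}=0$ and substitute into the first variation. Using the Green-type identity $\int_M\langle\Lambda_g^*(f),h\rangle\,dv_g=\int_M f\,\Lambda_g h\,dv_g+\int_{\partial M}\mathcal{B}(f,h)\,dA_g$, where the boundary integrand $\mathcal{B}$ is bilinear in the Cauchy data of $f$ and $h$, the bulk term drops out because $\Lambda_g h=0$, so $2V'(g)h$ reduces to the boundary term. The task is then to show $\mathcal{B}(f,h)$ vanishes under the hypotheses $f|_{\partial M}=0$ and $h|_{T\partial M}=0$. This is the one genuinely computational point: one must write $\Lambda_g^*$ and $\Lambda_g$ explicitly in terms of $T_1$, $Ric_g$, $R_g$ and covariant derivatives, and exploit that $T_1$ is divergence free to cancel the surviving terms that pair $\nabla_\nu f$ against the normal components of $h$.

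The necessity direction is where the hypothesis on $\mathcal{T}_g$ is essential, and it rests on a closed-range/Fredholm argument. I would first record the key fact that tracing the adjoint reproduces the conformal operator, i.e. $\mathrm{tr}_g\Lambda_g^*(f)=c\,\mathcal{T}_g(f)$ for a nonzero constant $c$; this follows by testing the Green identity against conformal directions $h=\phi g$ with $\phi\in C_0^\infty(M)$ and using that $\mathcal{T}_g$ is the conformal linearization of $\sigma_2$ and is $L^2$-self-adjoint, the boundary contributions dropping because both $\phi$ and $f$ vanish on $\partial M$. Consequently, if $\Lambda_g^*(f)=0$ with $f|_{\partial M}=0$, then $\mathcal{T}_g(f)=0$ with Dirichlet data, and positivity of the first Dirichlet eigenvalue of $-\mathcal{T}_g$ forces $f\equiv 0$; thus $\Lambda_g^*$ is injective on functions vanishing on the boundary. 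Since $\Lambda_g$ is an underdetermined-elliptic operator whose formal adjoint $\Lambda_g^*$ is now injective with the matching Dirichlet condition, standard elliptic boundary theory gives that $\Lambda_g$ has closed range with trivial cokernel. The closed-range theorem then identifies $(\ker\Lambda_g)^\perp$ with the image of $\Lambda_g^*$, and criticality, i.e. $g\perp\ker\Lambda_g$, yields a solution $f$ of $\Lambda_g^*(f)=g$ in the Dirichlet space, hence with $f|_{\partial M}=0$; elliptic regularity upgrades $f$ to be smooth.

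The main obstacle I anticipate is twofold, and both parts concern the boundary. First, in the sufficiency direction, carrying out the boundary computation $\mathcal{B}(f,h)=0$ honestly requires the precise second-order expressions for $\Lambda_g$ and $\Lambda_g^*$ together with careful bookkeeping of which components of the Cauchy data survive under the two partial boundary conditions $f|_{\partial M}=0$ and $h|_{T\partial M}=0$, noting that $h(\nu,\cdot)$ and $\nabla_\nu f$ need not vanish. Second, in the necessity direction, one must install the correct functional-analytic framework, namely suitable Hölder or Sobolev spaces together with the boundary condition on $h$ and the dual Dirichlet condition on $f$, so that the adjoint pairing is exact with no leftover boundary term and the closed-range theorem genuinely applies. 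Verifying that the overdetermined system $\Lambda_g^*(f)=g$ is solvable, rather than merely that its trace is, is the crux, and it is exactly the triviality of $\ker\Lambda_g^*$ furnished by the $\mathcal{T}_g$-hypothesis that removes the obstruction.
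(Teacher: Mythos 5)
Your sufficiency argument is essentially the paper's: substitute $\Lambda_g^*(f)=g$ into the first variation, integrate by parts, and check that the boundary terms vanish when $f|_{\partial M}=0$ and $h|_{T\partial M}=0$; this is exactly the computation in the paper's display \eqref{eq022}. The necessity direction is where you diverge, and where there is a genuine gap. You reduce everything to the identity $(\ker\Lambda_g)^\perp=\operatorname{Im}\Lambda_g^*$ via the closed range theorem, but the two inputs this requires are not supplied. First, the assertion that ``$g$ is critical precisely when $g\perp\{h:\Lambda_g h=0,\ h|_{T\partial M}=0\}$'' already needs every element of that kernel to be realized as the velocity of an actual curve in $\mathcal M_\gamma^K$; this is not automatic and is exactly what the paper's Lemma \ref{lemma-auxiliar-teorema-c} (an implicit function theorem argument resting on the ellipticity of $\mathcal T_g$, i.e.\ $2$-admissibility, together with the eigenvalue hypothesis) provides. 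Second, and more seriously, ``standard elliptic boundary theory gives closed range'' does not apply off the shelf. To invoke the closed range theorem you must identify the Hilbert-space adjoint of $\Lambda_g$ restricted to $\{h:h|_{T\partial M}=0\}$, i.e.\ determine exactly which $f$ annihilate all boundary terms in the Green identity for all such $h$; there is no reason this domain is precisely $\{f:f|_{\partial M}=0\}$, and if it is larger your argument only produces a multiplier $f$ with uncontrolled boundary values. That this matters is visible in the paper's Proposition \ref{VH}: a solution of $\Lambda_g^*(f)=g$ with $f|_{\partial M}\neq 0$ does \emph{not} make $g$ critical. Injectivity of $\Lambda_g^*$ on Dirichlet functions (which you correctly deduce from $tr_g\Lambda_g^*(f)=-\mathcal T_g(f)$ and the eigenvalue hypothesis) does not by itself yield the closed-range estimate for this overdetermined second-order operator up to the boundary.

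The paper avoids all of this with a more elementary, constructive device: for an \emph{arbitrary} $h$ with $h|_{T\partial M}=0$ (not required to lie in $\ker\Lambda_g$), it conformally corrects $g+th$ back into $\mathcal M_\gamma^K$ via the implicit function theorem, computes $V'(0)$ along the corrected path, and pairs the resulting correction term $u'(0)$, which solves $\mathcal T_g(u'(0))=\tfrac12\Lambda_g(h)$, against the explicit function $f$ obtained from the scalar Dirichlet problem $\mathcal T_g(f)=-n$, $f|_{\partial M}=0$. Criticality then gives $\int_M\langle h,\Lambda_g^*(f)-g\rangle\,dv_g=0$ for all such $h$, whence $\Lambda_g^*(f)=g$ pointwise, with the boundary condition on $f$ built in from the start. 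If you wish to keep the Lagrange-multiplier framing, you would need to prove the coercivity estimate for $\Lambda_g^*$ on the Dirichlet space and carry out the exact domain identification for the adjoint; the paper's route bypasses both.
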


We can see Theorem \ref{theorem-C} as a counterpart to the manifold with boundary, at least for the $\sigma_2$-curvature. In fact, as expected in \cite[Remark 6.3]{case2019conformally}, when $f|_{\partial M}$ is not necessarily zero,  we obtain the Proposition \ref{VH}, which gives us an interesting relation involving the boundary curvature associated to the $\sigma_2$-curvature defined in \cite{MR2570314}.

Metrics satisfying equation \eqref{eq0100} are special, because they have many interesting properties. For instance, they have constant $\sigma_2$-curvature and  critical metrics in space forms  are  geodesic balls (see Theorem \ref{classi}). More generally, in Einstein manifolds (we will see in Section \ref{space form}) the  equation \eqref{eq0100} is closely related with 
$V$-static metric (or Miao-Tam critical metric as first denoted in \cite{barros2015bach}). For more results  see \cite{miao2011einstein,barros2015bach,yuan2016volume,batista2017critical,baltazar2017critical}.

In this paper we also study second order properties for the volume  functional on $\mathcal{M}_\gamma^K$ and we show that there is a variation for which geodesic balls are indeed local minima for the volume functional in a natural direction. We also observe that it is not difficult modify our proof for a manifold without boundary. We point out that such result  is new even in the closed case.

\begin{theoremletter}\label{theorem-D}
Let $\Omega$ be a geodesic ball  with compact closure in $\mathbb{S}^n_+$ and $\mathbb{H}^n$, and let $g$ be the standard metric on $\Omega$.   Let $\gamma=g|_{T\partial \Omega}$ and $K$ be the constant equal the $\sigma_2$-curvature of $g.$ 
\begin{itemize}
\item[(a)]If $\Omega\subset\mathbb{S}_+^{n}$,  there exists a smooth path  $\{g(t)\}$  in $\mathcal M^K_{
\gamma}$ such that $V'(0)=0$ and $V''(0)>0$. 
\item[(b)]  If $\Omega\subset \mathbb{H}^n,$ given $p\in M$ there exists a geodesic ball centered at $p$  with radius $\delta>0$, which depends on $p$ and $(\Omega,g)$,  and a smooth path  $\{g(t)\}$  in $\mathcal M^K_{
\gamma}$ such that $V'(0)=0$ and $V''(0)>0$. 
\end{itemize}
Here $V(t)$ is the volume of $(\Omega,g(t))$. In particular, for the above models the volume of the standard metric $g$ are strict local minimum along those  variations.
\end{theoremletter}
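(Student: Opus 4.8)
The plan is to prove Theorem~\ref{theorem-D} by constructing an explicit one-parameter family of metrics within $\mathcal{M}^K_\gamma$ and computing the first and second variations of volume along it. Since the geodesic ball $(\Omega,g)$ is a critical point of the volume functional restricted to $\mathcal{M}^K_\gamma$ (by Theorem~\ref{theorem-C} together with the classification in Theorem~\ref{classi}), the first variation $V'(0)$ should automatically vanish for any admissible variation; the content of the theorem is the strict positivity $V''(0)>0$ along a carefully chosen direction. Because space forms are Einstein, the static potential $f$ solving $\Lambda_g^*(f)=g$ is explicit (a function of the distance to the center), and I expect the natural variation to be driven by this $f$: the direction $h=\Lambda_g^*(f)$-compatible infinitesimal deformation, projected back onto the constraint surface $\mathcal{M}^K_\gamma$.

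The key steps, in order, are as follows. First I would set up the constraint: a path $\{g(t)\}\subset\mathcal{M}^K_\gamma$ must satisfy $\sigma_2(g(t))=K$ for all $t$ and $g(t)|_{T\partial\Omega}=\gamma$ with $f|_{\partial\Omega}=0$ respected, so differentiating the $\sigma_2$-constraint gives a linearized equation $\Lambda_g(\dot g)=0$ at $t=0$, and a second differentiation brings in the second-derivative formula for $\sigma_2$ from Proposition~\ref{lem004}. Second, using the explicit structure of the space form and the operator $\mathcal{T}_g$ in \eqref{eq049}, I would solve the linearized constraint by prescribing a conformal-type variation $\dot g = u\,g$ (or a combination of a conformal factor with the static direction) where $u$ is chosen in $C_0^\infty(\Omega)$ so that $\mathcal{T}_g(u)=0$; the positivity of the first Dirichlet eigenvalue of $-\mathcal{T}_g$ guarantees control of the relevant quadratic form. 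Third, I would insert this variation into the second variation formula for $V$, integrate by parts using $f|_{\partial\Omega}=0$, and reduce $V''(0)$ to an integral expression involving $\mathcal{T}_g$ and $T_1$; positive $2$-admissibility makes $T_1>0$, which should force the quadratic form to be positive definite on the nontrivial direction, yielding $V''(0)>0$.

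The geometric distinction between cases (a) and (b) will come from the sign of $\lambda$: on $\mathbb{S}^n_+$ the curvature is positive and the spectral gap for $-\mathcal{T}_g$ on any geodesic ball is favorable, so a single construction works for all such balls, whereas on $\mathbb{H}^n$ the relevant eigenvalue condition on $-\mathcal{T}_g$ (or equivalently the positivity of the Dirichlet form) need only hold for sufficiently small radius $\delta$, which is why case (b) is stated locally with $\delta$ depending on $p$ and $(\Omega,g)$. I would therefore verify the eigenvalue/coercivity hypothesis of Theorem~\ref{theorem-C} separately for each model, using the explicit radial form of $\sigma_2(g)$ and $T_1$ in space forms to check the sign of the zeroth-order term $2u\sigma_2(g)$ against the principal part $\tfrac12\,\mathrm{div}(T_1(\nabla u))$.

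The main obstacle I anticipate is ensuring that the constructed path genuinely stays inside $\mathcal{M}^K_\gamma$ to second order — that is, that the second-order constraint $\tfrac{d^2}{dt^2}\sigma_2(g(t))|_{t=0}=0$ can be met by correcting the path with an $O(t^2)$ term without disturbing the boundary condition or the sign of $V''(0)$. This requires an implicit-function-theorem or Lyapunov–Schmidt argument on the nonlinear constraint map $g\mapsto\sigma_2(g)-K$, whose surjectivity onto the appropriate function space hinges precisely on the ellipticity of $\mathcal{T}_g$ and the positivity of its first Dirichlet eigenvalue; closing this surjectivity/transversality argument cleanly, and showing the correction term contributes nonnegatively (or negligibly) to the volume expansion, is where the real work lies.
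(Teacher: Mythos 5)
There is a genuine gap at the heart of your construction: the variation direction you propose is necessarily trivial. You plan to satisfy the linearized constraint with a conformal deformation $\dot g=u\,g$, where $u\in C_0^\infty(\Omega)$ solves $\mathcal T_g(u)=0$. By Lemma \ref{Lem008} the linearized $\sigma_2$-constraint for a conformal direction is exactly $\Lambda_g(ug)=-\mathcal T_g(u)=0$, so you are asking for $0$ to be a Dirichlet eigenvalue of $\mathcal T_g$. But the standing hypothesis on these geodesic balls --- the one needed for Theorem \ref{theorem-C}, for the uniqueness of the potential $f$, and for the implicit-function-theorem step you yourself invoke --- is precisely that $\mathcal T_g$ with Dirichlet conditions is invertible. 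Hence $u\equiv 0$ and there is no nontrivial admissible conformal direction to feed into the second variation. Relatedly, the dichotomy between (a) and (b) cannot come from a spectral condition on $\mathcal T_g$, since that condition holds (and is used) in both models; and the claim that positivity of $T_1$ ``forces the quadratic form to be positive definite'' conflates ellipticity of the constraint operator with positivity of $V''$, which are unrelated.

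What the paper actually does is take $h$ to be a compactly supported \emph{transverse-traceless} tensor ($\operatorname{tr}_gh=0$, $\operatorname{div}_gh=0$, existence quoted from Miao--Tam/Corvino); on a space form $\Lambda_g(h)=0$ automatically (Proposition \ref{proposition001}), so the conformal correction $u(t)$ produced by the implicit function theorem vanishes to first order and $g'(0)=h$. The second variation is then computed (Theorem \ref{teo003}, Corollary \ref{cor002}) by using the constraint $\sigma_2''(0)=0$ together with $\Lambda_g^*(f)=g$ to convert the term $\int\operatorname{tr}_gg''(0)$ into an integral of the potential $f$ against a quadratic expression in $h$, which for TT tensors collapses to
\[
V''(0)=\int_\Omega f\left(\frac{1}{8(n-2)^2}\left|\Delta_gh-2\kappa h\right|^2+\frac{\kappa^2}{8}|h|^2+\frac{\kappa}{16}|\nabla h|^2\right).
\]
For $\Omega\subset\mathbb S^n_+$ ($\kappa=1$, $f>0$) this is manifestly positive. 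For $\mathbb H^n$ ($\kappa=-1$) the gradient term is negative and is absorbed by supporting $h$ in a small ball $B(p,\delta)$ on which $f$ is nearly constant and the first eigenvalue of the rough Laplacian blows up as $\delta\to 0$; that is the true source of the locality in case (b). Your instinct about correcting the path via the implicit function theorem matches the paper, but without replacing the conformal direction by a TT direction the argument cannot be completed.
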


The organization of the paper is as follows. 
In Section \ref{sec-background},  we give some preliminaries and an overview about  deformations of $\sigma_2$-curvature. In Section \ref{closed section}, we study critical metrics in closed manifolds and prove Theorem \ref{theorem-A}. Moreover, we also study second order properties of the $\sigma_2$-curvature  in order to compute the second variation of the functional \eqref{functionalv} for $k=2$. In Section \ref{volume comparison}, we find some variational formulae in order to prove Theorem \ref{theorem-B}.
We study variational properties of the volume functional, constraint to the space of metrics of constant $\sigma_2$-curvature with a prescribed boundary metric in Section \ref{sec003}, moreover we prove Theorem \ref{theorem-C}.
In Section \ref{space form}, we give  some examples of functions satisfying  \eqref{eq0100}, we also show that the only domains in the space forms $\mathbb{H}^{n}$ or $\mathbb{S}^{n}$, on which the canonical metrics are critical points, are geodesic balls. In Section \ref{secondvariat},  we  compute the second variation of the volume functional at critical points in $\mathcal M_\gamma^K$. Then we calculate the second variation formula for the volume functional in order to prove Theorem \ref{theorem-D}.

\section{Background}\label{sec-background}
Let $(M^{n},g)$ be a Riemannian manifold of dimension $n\geq 3$, with or without boundary. The $\sigma_k$-{\it curvature}, which we denote by $\sigma_k(g)$, is defined as the second elementary symmetric function of the eigenvalue of the {\it Schouten tensor}, see \eqref{eq050} for its definition. Since $\sigma_1(g)  =  tr_g A_g$ and $\sigma_2(g)=\frac{1}{2}((tr_g A_g)^2-|A_g|^2)$
we note that $\sigma_1(g) =R_g/(2(n-1))$
and
\begin{equation}\label{eq034}
    \sigma_2(g) = \displaystyle \frac{1}{2(n-2)^2}\left(\frac{n}{4(n-1)}R_g^2 -|Ric_g|^2\right)=\frac{1}{2(n-2)^2}\left(\frac{(n-2)^2}{4n(n-1)}R_g^2-|\mathring{Ric}_g|^2\right),
\end{equation}
where $\mathring{Ric}_g$ is the trace free Ricci tensor. We consider the $\sigma_2$-curvature as a map $\sigma_2:\mathcal M\rightarrow C^\infty(M)$, where $\mathcal M$ is the space of all Riemannian metrics on $M$ and $C^\infty(M)$ is the space of all smooth functions on $M$. It was proved in \cite{MR3828913} that the linearization of the $\sigma_2$-curvature map is the map $\Lambda_g:S_2(M)\rightarrow C^\infty(M)$ given by
 \begin{equation}\label{eq001}
 \begin{array}{rcl}
 c(n)\Lambda_g(h)  =  \displaystyle\left\langle Ric_g,-\Delta_E^gh+\nabla^2tr_gh+2\delta^*\delta h\right\rangle
\displaystyle -\frac{n}{2(n-1)}R_g\left(\Delta_g tr_gh-\delta^2h+\langle Ric,h\rangle\right),
 \end{array}
 \end{equation}
 where $c(n)=2(n-2)^2$, and the $L^2$-formal adjoint of $\Lambda_g$ is the map $\Lambda_g^*:C^\infty(M) \rightarrow S_2(M)$ given by
 \begin{equation}\label{eq031}
     \begin{array}{rcl}
c(n)\Lambda_g^*(f)  & = &  \displaystyle-\Delta_E^g(fRic_g)+\delta^2(fRic_g)g+2\delta^*\delta(fRic_g)
\\
& & -\displaystyle\frac{n}{2(n-1)}\left(\Delta_g(fR_g)g-\nabla^2(fR_g)+fR_gRic_g\right).
\end{array}
 \end{equation}
 Here $S_2(M)$ is the space of symmetric 2-tensors on $M$, $\delta=-\mbox{div}$, $\delta^*$ is the $L^2$-formal adjoint of $\delta$ which is given by $(\delta^*\alpha)_{ij}=\frac{1}{2}(\nabla_i\alpha_j+\nabla_j\alpha_i)$ for all $1$-tensors $\alpha$, $\mathring R(h)_{ij}=g^{kl}g^{st}R_{kijs}h_{lt}$ for all $h\in S_2(M)$, and $\Delta_E^g$ is the Einstein operator defined in \eqref{eq004}. This implies that
 \begin{equation}\label{eqn003}
  tr_g\Lambda_g^*(f)=-\mathcal T_g(f)\quad\mbox{ and }\quad \delta\Lambda_g^*(f)=\frac{1}{2}fd\sigma_2(g),
 \end{equation}
where $\mathcal T_g(u)$ is defined in \eqref{eq049}.  When $g$ is an Einstein metric, it holds
\begin{equation}\label{eq007}
    \sigma_2(g)=\frac{1}{8n(n-1)}R_g^2\qquad\mbox{ and }\qquad\mathcal T_g=\frac{1}{4n}R_g\Delta_g+\frac{1}{4n(n-1)}R_g^2.
\end{equation}

One recall the $L^2$-formal adjoint of the linearization of scalar curvature at a given metric $g$ which is given by  $L^*_g(f)=\nabla^2f-g\Delta_gf-fRic_g$ (see for instance \cite{MR0380907}). We observe that we can recover the Ricci tensor and the scalar curvature from $L^*_g$,  since $L^*_g(1)=-Ric_g$ and  $tr_g L^*_g(1)=-R_g.$ In a complete analogy we can define a symmetric two-tensor by $\Lambda_g^*(1)$. Moreover, we can recover the $\sigma_2$-curvature taking the trace of $\Lambda_g^*(1)$. In fact, this is a consequence of \eqref{eq049} and \eqref{eqn003}, or of the following lemma, which the proof is a direct computation.
\begin{lemma}\label{lem007}
For any metric $g$ we have
\begin{align*}
    2(n-2)^2\Lambda_g^*(1)  
     = &  \displaystyle -\Delta_E^g\mathring{Ric}_g+\frac{n-2}{2n(n-1)}(\Delta_gR_g)g-\frac{n-2}{2(n-1)}\nabla^2R_g\\
    & -\frac{(n-2)^2}{2n(n-1)}R_g\mathring{Ric}_g -\frac{2}{n}|\mathring{Ric}_g|^2g-\frac{4(n-2)^2}{n}\sigma_2(g)g.
\end{align*}
\end{lemma}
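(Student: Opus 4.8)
The plan is to specialize the general adjoint formula \eqref{eq031} to the case $f\equiv 1$ and then reduce the resulting tensor to the trace-free Ricci normal form using only the contracted second Bianchi identity and the definition of the Einstein operator. Setting $f=1$ annihilates every term carrying a derivative of $f$, so \eqref{eq031} collapses to
$$ c(n)\Lambda_g^*(1) = -\Delta_E^g(Ric_g) + \delta^2(Ric_g)\,g + 2\delta^*\delta(Ric_g) - \frac{n}{2(n-1)}\left(\Delta_g R_g\, g - \nabla^2 R_g + R_g Ric_g\right), $$
with $c(n)=2(n-2)^2$. The entire problem is thus to rewrite the first three terms on the right.

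First I would dispose of the divergence terms. By the contracted second Bianchi identity $\delta Ric_g = -\tfrac12\,dR_g$, and applying $\delta$ and $\delta^*$ once more (using $\delta=-\mathrm{div}$ and $(\delta^*\alpha)_{ij}=\tfrac12(\nabla_i\alpha_j+\nabla_j\alpha_i)$) yields $\delta^2(Ric_g)=\tfrac12\Delta_g R_g$ and $2\delta^*\delta(Ric_g)=-\nabla^2 R_g$. This already turns the pair $\delta^2(Ric_g)g+2\delta^*\delta(Ric_g)$ into scalar-curvature terms. Next I would split off the trace by writing $Ric_g=\mathring{Ric}_g+\tfrac1n R_g g$ and expanding $-\Delta_E^g(Ric_g)=-\Delta_E^g(\mathring{Ric}_g)-\tfrac1n\Delta_E^g(R_g g)$. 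Since $g$ is parallel, $\nabla^*\nabla(R_g g)=(\nabla^*\nabla R_g)g=-(\Delta_g R_g)g$, and with $\mathring R(g)=Ric_g$ one gets $\Delta_E^g(R_g g)=-(\Delta_g R_g)g-2R_g Ric_g$.

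Collecting the coefficients of $(\Delta_g R_g)g$, of $\nabla^2 R_g$, and of $R_g Ric_g$ then produces, respectively, the multiples $\tfrac{n-2}{2n(n-1)}$, $-\tfrac{n-2}{2(n-1)}$ (the latter entirely from the Bianchi reduction and the explicit $\nabla^2 R_g$ term, since the pure-trace piece $R_g g$ contributes no Hessian), and $-\tfrac{(n-2)^2}{2n(n-1)}$; writing $R_g Ric_g=R_g\mathring{Ric}_g+\tfrac1n R_g^2\,g$ separates the stated $R_g\mathring{Ric}_g$ term from a leftover pure-trace term proportional to $R_g^2\,g$. The term $-\Delta_E^g(\mathring{Ric}_g)$ is carried through unchanged.

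The last step, and the only place an identity beyond Bianchi and the definition of $\Delta_E^g$ enters, is to absorb this residual $R_g^2\,g$ term. Here I invoke the closed-form expression \eqref{eq034} for $\sigma_2(g)$ to trade $R_g^2$ for a combination of $\sigma_2(g)$ and $|\mathring{Ric}_g|^2$; the resulting $|\mathring{Ric}_g|^2 g$ and $\sigma_2(g)g$ contributions reproduce exactly the final two summands of the claim. There is no conceptual obstacle — the lemma is, as stated, a direct computation — so the only genuine difficulty is keeping the sign and Laplacian conventions consistent throughout the three reductions ($\delta=-\mathrm{div}$, the normalization $\nabla^*\nabla=-\Delta_g$ on functions, and the index convention making $\mathring R(g)=Ric_g$); once these are pinned down, the displayed coefficients fall out exactly as asserted.
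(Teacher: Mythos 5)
Your proposal is correct and is exactly the ``direct computation'' the paper alludes to (the paper gives no further details): specializing \eqref{eq031} to $f\equiv 1$, using $\delta Ric_g=-\tfrac12 dR_g$ to get $\delta^2 Ric_g=\tfrac12\Delta_g R_g$ and $2\delta^*\delta Ric_g=-\nabla^2R_g$, splitting $Ric_g=\mathring{Ric}_g+\tfrac1nR_gg$ with $\Delta_E^g(R_gg)=-(\Delta_gR_g)g-2R_gRic_g$, and trading the leftover $R_g^2g$ term for $\sigma_2(g)$ and $|\mathring{Ric}_g|^2$ via \eqref{eq034}. All the stated coefficients check out, and the result is consistent with the trace identity $tr_g\Lambda_g^*(1)=-2\sigma_2(g)$ from \eqref{eqn003}.
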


Note that $\mathcal T_g$ is self-adjoint. Then for any $v\in C^\infty_0(M)$ (the space of all smooth functions with compact support in $M$), using \eqref{eqn003} it holds
$$\int_Mv\Lambda_g\left(ug\right)dv_g=-\int_Mu\mathcal T_g(v)dv_g=-\int_Mv\mathcal T_g(u)dv_g,$$
which implies the following result.

\begin{lemma}\label{Lem008}
For any $u\in C^\infty(M)$ we have
$
    \Lambda_g(ug)=-\mathcal T_g(u).
$\end{lemma}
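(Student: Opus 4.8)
The plan is to exploit the formal $L^2$-adjointness built into the definitions of $\Lambda_g$ and $\Lambda_g^*$, together with the trace identity \eqref{eqn003} and the self-adjointness of $\mathcal T_g$. Concretely, for a fixed $u\in C^\infty(M)$ and an arbitrary test function $v\in C_0^\infty(M)$, I would start from the defining pairing
$$\int_M v\,\Lambda_g(ug)\,dv_g=\int_M\langle\Lambda_g^*(v),ug\rangle\,dv_g,$$
which is exactly the statement that $\Lambda_g^*$ is the adjoint of $\Lambda_g$; the compact support of $v$ ensures that the integrations by parts implicit in this identity produce no boundary contributions.

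The second step is to rewrite the right-hand side. Since $\langle S,ug\rangle=u\,\langle S,g\rangle=u\,tr_g S$ for every symmetric $2$-tensor $S$, I would pull $u$ out and recognise the $g$-pairing as a trace, obtaining $\int_M u\,tr_g\Lambda_g^*(v)\,dv_g$. Invoking the first identity in \eqref{eqn003}, namely $tr_g\Lambda_g^*(v)=-\mathcal T_g(v)$, this becomes $-\int_M u\,\mathcal T_g(v)\,dv_g$. Finally, using that $\mathcal T_g$ is self-adjoint in $L^2$ — which follows from its divergence form \eqref{eq049}, the symmetry of $T_1$, and the vanishing of boundary terms against the compactly supported $v$ — I can transfer the operator onto $u$ and arrive at
$$\int_M v\,\Lambda_g(ug)\,dv_g=-\int_M v\,\mathcal T_g(u)\,dv_g.$$

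Since this holds for every $v\in C_0^\infty(M)$, the fundamental lemma of the calculus of variations forces $\Lambda_g(ug)=-\mathcal T_g(u)$ pointwise in the interior, and smoothness of both sides extends the equality to all of $M$. I do not expect any genuine obstacle here: the argument is a clean adjoint computation. The only points requiring a moment of care are the bookkeeping of boundary terms — handled throughout by restricting the test functions to $C_0^\infty(M)$ — and confirming the self-adjointness of $\mathcal T_g$, for which one integrates the second-order divergence term by parts twice and uses that $T_1$ is a symmetric (indeed divergence-free) endomorphism so that $\langle\nabla v,T_1(\nabla u)\rangle=\langle T_1(\nabla v),\nabla u\rangle$.
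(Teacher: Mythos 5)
Your argument is correct and is essentially the paper's own proof: the authors likewise test against $v\in C_0^\infty(M)$, use the adjoint relation together with $tr_g\Lambda_g^*(v)=-\mathcal T_g(v)$ from \eqref{eqn003}, and then invoke the self-adjointness of $\mathcal T_g$ to conclude. Your write-up merely makes explicit the trace identity $\langle S,ug\rangle=u\,tr_gS$ and the integration-by-parts justification of self-adjointness, which the paper leaves implicit.
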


Finally, given $u\in C^\infty(M)$, the $\sigma_2$-curvature of the metric $g_0$ and of the conformal metric $g=e^{2u}g_{0}$ are related by the equation (see \cite{MR3663325}),

\begin{equation}\label{eq047}
   -\mathcal T_{g_0}(u)+\left(2u+\frac{1}{2}\right)\sigma_2(g_0)-\frac{1}{2}\sigma_2(g)e^{4u}+\mathcal I_{g_0}(u)=0,
\end{equation}
where
\begin{equation}\label{eq015}
\begin{array}{lcl}
  \mathcal I_g(u) & = &\displaystyle \frac{1}{4}\left((\Delta_g u)^2-|\nabla_g^2u|^2+\langle \nabla_g u,\nabla_g|\nabla_g u|^2\rangle\right)+\dfrac{(n-3)}{4}|\nabla_g u|^2\Delta_g u
  \\
 & &+\displaystyle \frac{(n-1)(n-4)}{16}|\nabla_g u|^4-\dfrac{1}{2(n-2)} Ric_g(\nabla_g u, \nabla_g u)  -\displaystyle\frac{(n-4)}{8(n-2)}R_g|\nabla_g u|^2.
\end{array}
\end{equation}

\section{Critical Metrics in Closed Manifolds}\label{closed section}

 In this section we consider a closed Riemannian manifold $(M^n,g)$ of dimension $n\geq 3$ satisfying the equation 
 \begin{equation}\label{eq028}
 \Lambda^*_g(f)=\kappa g.
\end{equation} for some constant $\kappa$ and some smooth function $f$. An interesting question is if it is possible to classify the Riemannian manifolds $(M,g)$ which satisfy \eqref{eq028} for some smooth function $f$. This question does not seem to be easy to deal with due to the complexity of the equation.

\subsection{Proof of Theorem \ref{theorem-A}}
We start this subsection with the following preliminary result.

\begin{theorem}\label{teo002}
Let $(M^{n},g)$ be a connected closed Riemannian manifold of dimension $n\geq 3$. Suppose there exists a non trivial smooth function $f\in C^\infty(M)$ satisfying \eqref{eq028}.
\begin{enumerate}
 \item[(a)]  If $g$ is an admissible metric, then the $\sigma_2$-curvature is constant.
 \item[(b)] If $f$ is a nonzero constant $c$, then the $\sigma_2$-curvature is constant equal to $-\frac{n\kappa}{2c}$.
\end{enumerate}
\end{theorem}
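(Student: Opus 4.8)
The plan is to reduce equation \eqref{eq028} to two scalar identities by taking its $g$-trace and its divergence, and then to exploit the ellipticity provided by admissibility. Both identities are already recorded in \eqref{eqn003}: the trace gives $tr_g\Lambda_g^*(f)=-\mathcal{T}_g(f)$ and the divergence gives $\delta\Lambda_g^*(f)=\frac12 f\,d\sigma_2(g)$, while \eqref{eq049} evaluated at the constant function yields $\mathcal{T}_g(1)=2\sigma_2(g)$ since $\nabla^2 1=0$.

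Part (b) is then immediate and uses no admissibility hypothesis. Writing $f\equiv c$ with $c\neq 0$, the equation becomes $c\,\Lambda_g^*(1)=\kappa g$. Taking the $g$-trace and using $tr_g\Lambda_g^*(1)=-\mathcal{T}_g(1)=-2\sigma_2(g)$ together with $tr_g(\kappa g)=n\kappa$, I obtain $-2c\,\sigma_2(g)=n\kappa$, that is $\sigma_2(g)\equiv-\tfrac{n\kappa}{2c}$, which is exactly the claimed constant.

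For part (a) I would first take the divergence of \eqref{eq028}. Since $g$ is parallel we have $\delta g=0$, so the second identity in \eqref{eqn003} yields $\tfrac12 f\,d\sigma_2(g)=0$, i.e. $f\,d\sigma_2(g)=0$ on $M$. Thus $d\sigma_2(g)$ vanishes on the open set $U:=\{f\neq 0\}$. The remaining point is to show that $U$ is \emph{dense}, for once this is known, continuity of the smooth $1$-form $d\sigma_2(g)$ forces $d\sigma_2(g)\equiv 0$ on all of $M$, and connectedness then gives that $\sigma_2(g)$ is constant. To prove density, i.e. that the zero set $\{f=0\}$ has empty interior, I would take the $g$-trace of \eqref{eq028}, which by \eqref{eqn003} gives the second-order equation $\mathcal{T}_g(f)=-n\kappa$. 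Because $g$ is $2$-admissible, the first Newton transformation $T_1$ is positive definite, so $\mathcal{T}_g$ is elliptic. If $\kappa\neq 0$, then on any open set where $f$ vanished all derivatives of $f$ would vanish too, forcing $\mathcal{T}_g(f)=0$ there and contradicting $\mathcal{T}_g(f)=-n\kappa\neq 0$; if $\kappa=0$, the strong unique continuation property for the elliptic operator $\mathcal{T}_g$ shows that a nontrivial solution of $\mathcal{T}_g(f)=0$ cannot vanish on any open set. In either case $\{f=0\}$ has empty interior, so $U$ is dense and the argument closes.

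The main obstacle is precisely this density step in part (a): controlling the zero set of $f$. This is where the admissibility hypothesis is genuinely used, since it is what makes $\mathcal{T}_g$ elliptic, and the borderline case $\kappa=0$ additionally relies on unique continuation. Everything else is a direct consequence of the algebraic trace and divergence identities already established in \eqref{eqn003} and \eqref{eq049}.
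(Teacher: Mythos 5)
Your proposal is correct and follows essentially the same route as the paper: the divergence identity in \eqref{eqn003} gives $f\,d\sigma_2(g)=0$, the trace identity gives the elliptic equation $\mathcal T_g(f)=-n\kappa$ (elliptic precisely because admissibility makes $T_1$ positive definite), and unique continuation for $\mathcal T_g$ (the paper invokes Aronszajn and Cordes at a point of infinite-order vanishing, you invoke it on an open subset of $\{f=0\}$ -- the same ingredient) rules out $f$ vanishing on an open set, so $d\sigma_2(g)$ vanishes on a dense set and hence everywhere. Part (b) is the identical one-line trace computation.
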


\begin{proof}
By \eqref{eqn003} and \eqref{eq028} we obtain
$0=\delta\Lambda_g^*(f)=\frac{1}{2}fd\sigma_2(g).$
Suppose there exists $p\in M$ with $f(p)=0$ and $d\sigma_2(g)\neq 0$ at $p$. By taking derivatives, we can see that  $\nabla^mf(p)=0$ for all $\ m\geq 1.$ Moreover, note that by \eqref{eqn003} the function $f$ satisfies 
\begin{equation}\label{eq021}
\mathcal T_g(f)=-n\kappa,
\end{equation}
where $\mathcal T_g$ is defined in \eqref{eq049}. Since $g$ is an admissible metric, then (\ref{eq021}) is an elliptic equation. By results in \cite{MR76155} and \cite{MR0086237} we can conclude that $f$ vanishes identically in $M.$ But this is a contradiction. Therefore, $d\sigma_2(g)$ vanishes in $M$ and thus $\sigma_2(g)$ is constant.
 
Now, if $f$ is a nonzero constant $c$, by (\ref{eqn003}) we obtain $n\kappa=tr_g\Lambda_g^*(c)=-2\sigma_2(g)c$.
\end{proof}

When in \eqref{eq028} the function $f$ is constant we obtain the following result, which is a consequence of the results in \cite{MR2052939,MR1872547}.

\begin{proof}[Proof of Theorem \ref{theorem-A}]
Direct computations show the sufficiency. Therefore, we need only to prove the ``only if" part. Then, suppose that $g$ is a $\sigma_2$-Einstein metric with $\sigma_2(g)\geq 0$. By Theorem \ref{teo002}  $\sigma_2(g)$ is constant and we can write
$\Lambda_g^*(1)=-\frac{2}{n}\sigma_2(g)g.$

Now, define the Riemannian functional $\hat{\mathcal F_2}:\mathcal M\rightarrow\mathbb R$, where $\mathcal M$ is the space of all Riemannian metrics in $M$, given by
\begin{equation}\label{eq051}
    \hat{\mathcal F_2}(g)=V(g)^{\frac{4-n}{n}}\int_M\sigma_2(g)dv_g,
\end{equation}
where $V(g)$ is the volume of $(M,g).$ For a general $k$ the functional $\hat{\mathcal F_k}$ associated to the $\sigma_k$-curvature was introduced and discussed in \cite{MR1738176}.

Note that $\hat{\mathcal F_2}(\lambda g)=\hat{\mathcal F_2}(g)$ for all positive constant $\lambda$. Also, the first variation of $\hat{\mathcal F_2}$ is given by
\begin{align*}
    D\hat{\mathcal F_2}(g)(h)  = &~\displaystyle V(g)^{\frac{4-n}{n}}\displaystyle\int_M\left(\Lambda_g(h)+\frac{1}{2}\sigma_2(g)tr_gh\right)dv_g
 \displaystyle+\frac{4-n}{2n}V(g)^{\frac{4-2n}{n}}\int_M\sigma_2(g)dv_g\int_Mtr_ghdv_g
\\
 = & ~ \displaystyle V(g)^{\frac{4-n}{n}}\int_M\left\langle \Lambda_g^*(1)+\frac{2}{n}\sigma_2(g)g,h \right\rangle dv_g,
\end{align*}
where in the second equality we used that $\Lambda^*_g$ is the $L^2$-formal adjoint of $\Lambda_g$ and that $\sigma_2(g)$ is constant. This implies that a $\sigma_2$-Einstein metric is a critical point for the functional $\hat{\mathcal F_2}$. Therefore, the result follows by \cite[Theorem 1.1]{MR1872547} and \cite[Theorem B]{MR2052939}.
\end{proof}

We remark that  the case $n=4$ in Theorem \ref{theorem-A} cannot be treated on account of the specificity of this dimension.
Indeed, for a locally conformally flat manifold the Weyl tensor vanishes. Taking derivatives of the Gauss-Bonnet-Chern Formula \eqref{Gauss-Bonnet-int}, for all symmetric 2-tensor $h$, we obtain
$$0=\int_M\left(\Lambda_g(h)+\frac{1}{2}\sigma_2(g)tr_gh\right)dv_g=\int_M\left\langle \Lambda_g^*(1)+\frac{1}{2}\sigma_2(g)g,h\right\rangle dv_g.$$
This implies that $\Lambda_g^*(1)=-\frac{1}{2}\sigma_2(g)g$. Therefore, all Riemannian manifold of dimension 4 is $\sigma_2$-Einstein. This is a counterpart to Einstein manifold where all Riemannian manifold of dimension 2 is Einstein.

The proof of Theorem \ref{theorem-A} tells us that a $\sigma_2$-Einstein metric is a critical point for the functional $\hat{\mathcal F_2}$ (see \eqref{eq051}). From this and the example given in \cite[Section 6]{MR2052939} and \cite[Remark 7.1]{MR1872547}, we obtain the existence of $\sigma_2$-Einstein metrics with $\sigma_2(g)<0$ which are not Einstein.

Restricting the functional $\mathcal F_2(g)=\int_M\sigma_2(g)dv_g$ to a certain space of metrics, it is possible to give conditions for its  critical metrics to be hyperbolic, see Theorem 1.2 of \cite{MR1872547}, which holds as long as  Theorem B of \cite{MR2052939} holds.  
Arguing similarly as in Theorem \ref{theorem-A} we have

\begin{theorem}
Let $(M^{n},g)$ be a closed Riemannian manifold of dimension $n\geq 3$, $n\not=4$, which is locally conformally flat for $n\geq 5$. If $(M,g)$ is a $\sigma_2$-Einstein manifold with $\sigma_2(g)>0$ and scalar curvature $R_g<0$, 
then $(M^n,g)$ is hyperbolic.
\end{theorem}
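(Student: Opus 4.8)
The plan is to follow the proof of Theorem \ref{theorem-A} almost verbatim; the only genuinely new ingredient is checking that the sign hypotheses $\sigma_2(g)>0$ and $R_g<0$ place $g$ in the negative $2$-cone, which is precisely what is needed both to invoke Theorem \ref{teo002} and to steer the cited classification results toward the hyperbolic conclusion.

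First I would record that $\sigma_1(g)=R_g/\big(2(n-1)\big)<0$ while $\sigma_2(g)>0$, so that $g\in\Gamma_2^-$ and hence $g$ is (negatively) $2$-admissible, in particular admissible. Thus Theorem \ref{teo002}(a) applies and $\sigma_2(g)$ is a positive constant. Being $\sigma_2$-Einstein means $\Lambda_g^*(1)=\kappa g$ for some constant $\kappa$; taking the trace and using $tr_g\Lambda_g^*(1)=-\mathcal T_g(1)=-2\sigma_2(g)$ from \eqref{eqn003} and \eqref{eq049} yields $n\kappa=-2\sigma_2(g)$, so that
\[
\Lambda_g^*(1)=-\frac{2}{n}\sigma_2(g)g.
\]

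Next, exactly as in the proof of Theorem \ref{theorem-A}, I would compute the first variation of the scale-invariant functional $\hat{\mathcal F_2}$ of \eqref{eq051}. Using that $\Lambda_g^*$ is the $L^2$-formal adjoint of $\Lambda_g$ and that $\sigma_2(g)$ is constant, the variation collapses to
\[
D\hat{\mathcal F_2}(g)(h)=V(g)^{\frac{4-n}{n}}\int_M\left\langle\Lambda_g^*(1)+\frac{2}{n}\sigma_2(g)g,\,h\right\rangle dv_g,
\]
which vanishes for every symmetric $2$-tensor $h$ by the displayed identity above. Hence $g$ is a critical point of $\hat{\mathcal F_2}$, equivalently of $\mathcal F_2$ restricted to unit-volume metrics. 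I would then invoke the classification of such critical metrics: since $g$ is critical with $\sigma_2(g)>0$ and $R_g<0$, Theorem 1.2 of Gursky--Viaclovsky \cite{MR1872547} settles the case $n=3$, and Theorem B of Hu--Li \cite{MR2052939} settles the locally conformally flat case $n\geq5$; both give constant sectional curvature, which the condition $R_g<0$ forces to be negative, i.e.\ $(M^n,g)$ is hyperbolic.

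The genuinely new step, and the only real obstacle, is the admissibility claim: one must verify that $\sigma_2(g)>0$ together with $R_g<0$ forces $g\in\Gamma_2^-$, so that $T_1$ is negative definite and $\mathcal T_g$ in \eqref{eq049} is elliptic, since the unique-continuation argument underlying Theorem \ref{teo002} requires ellipticity. Everything else is a transcription of the proof of Theorem \ref{theorem-A}, with the sign hypotheses merely selecting the hyperbolic rather than the spherical branch of the cited classification.
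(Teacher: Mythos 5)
Your proposal is correct and follows essentially the same route as the paper: establish that a $\sigma_2$-Einstein metric is critical for $\hat{\mathcal F_2}$ exactly as in the proof of Theorem \ref{theorem-A}, then let the sign hypotheses $\sigma_2(g)>0$, $R_g<0$ select the hyperbolic branch of Theorem 1.2 of \cite{MR1872547} and Theorem B of \cite{MR2052939}. The admissibility verification you flag as the ``only real obstacle'' is in fact dispensable: since the potential here is the constant function $f=1$, part (b) of Theorem \ref{teo002} already gives that $\sigma_2(g)$ is constant without any ellipticity input, though your check that $g\in\Gamma_2^-$ is itself correct.
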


\subsection{Second Variation of \texorpdfstring{$\mathcal F_2$}{Lg}}
   
In this subsection, we will compute the second variation of the functional $\mathcal F_2$ at critical points in $\mathcal M$ . First, we give a formula for the second derivative of the $\sigma_2$-curvature. We remark that our notation convention for the curvature tensor gives
\begin{equation}\label{eq054}
    R_{ijkl} =\kappa (g_{il}g_{jk}-g_{ik}g_{jl})
\end{equation}
in the case that $g$ has constant sectional curvature $\kappa$. First, we remember the second derivative of the scalar curvature, which can be found in \cite[Lemma 3 ]{MR2546025}.
\begin{lemma}\label{lem005}
Let $\{g(t)\}$ be a smooth path of smooth metrics with $g(0)=g$. Let $R(t)$ be the scalar curvature of $g(t)$. Then
\begin{align*}
     R''(0)  = &~ \displaystyle\Delta_g|h|^2 +2\langle h,\nabla ^2tr_gh\rangle+4\langle \nabla \delta h,h\rangle-2\left|\delta h+\frac{1}{2}dtr_gh\right|^2-\frac{1}{2}|\nabla h|^2
     \\
       &\displaystyle+2\langle \mathring R(h),h\rangle -g^{pq}g^{ij}g^{st}\nabla_ph_{is}\nabla_th_{jq}+DR_g(h').
\end{align*}
where $h=g'(0)$ and $h'=g''(0)$, the covariant derivative, the curvature tensor are with respect to $g$, and $DR_g$ is the linearization of the scalar curvature at $g$.
\end{lemma}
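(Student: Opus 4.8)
The plan is to reduce everything to the known first variation of scalar curvature, differentiated a second time. Recall that the linearization of $R_g$ in the direction $h$ is
\[
DR_g(h)=-\Delta_g tr_g h+\delta^2 h-\langle Ric_g,h\rangle,
\]
so that along the path the chain rule gives $R'(t)=DR_{g(t)}(g'(t))$. Differentiating once more and using $g'(0)=h$, $g''(0)=h'$, the product rule splits $R''(0)$ into two pieces,
\[
R''(0)=\mathcal Q(h)+DR_g(h'),\qquad \mathcal Q(h):=\left.\frac{d}{dt}\right|_{t=0}DR_{g(t)}(h),
\]
where in $\mathcal Q(h)$ the symmetric tensor $h$ is held fixed and only the metric-dependence of the operator $DR$ is differentiated. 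The summand $DR_g(h')$ is the unique term linear in the acceleration $h'$, and it accounts for the last term of the statement; the whole difficulty lies in the quadratic form $\mathcal Q(h)$.

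To evaluate $\mathcal Q(h)$ I would differentiate each of the three constituents of $DR_g$ using the standard first-variation formulas
\begin{gather*}
\left.\tfrac{d}{dt}\right|_0 g^{ij}=-h^{ij},\qquad
\left.\tfrac{d}{dt}\right|_0\Gamma^k_{ij}=\tfrac12 g^{kl}\big(\nabla_i h_{jl}+\nabla_j h_{il}-\nabla_l h_{ij}\big),\\
\left.\tfrac{d}{dt}\right|_0 dv_g=\tfrac12 (tr_g h)\,dv_g,
\end{gather*}
together with the linearization $DRic_g(h)$ of the Ricci tensor, whose principal part is $\tfrac12\Delta_E^g h$. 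Concretely: differentiating $-\langle Ric_g,h\rangle=-g^{ik}g^{jl}(Ric_g)_{ij}h_{kl}$ produces $-\langle DRic_g(h),h\rangle$ plus a contraction term of the form $2(Ric_g)_{ij}h^{ik}h^{j}{}_{k}$ coming from the two varying inverse metrics; differentiating $-\Delta_g tr_g h$ produces, through $\left.\tfrac{d}{dt}\right|_0 tr_g h=-|h|^2$ sitting inside the Laplacian, the leading term $\Delta_g|h|^2$, plus corrections from the variation of $\Gamma$ in the Laplacian; and differentiating $\delta^2 h$ produces the time-derivatives of the divergence operator (again via the variations of $g^{ij}$ and $\Gamma^k_{ij}$) acting on $h$, which reorganize into $2\langle h,\nabla^2 tr_g h\rangle$, $4\langle\nabla\delta h,h\rangle$ and the squared term $-2\big|\delta h+\tfrac12 d\,tr_g h\big|^2$.

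Collecting all the terms, the remaining second-order differential contributions assemble into the two quadratic gradient expressions $-\tfrac12|\nabla h|^2$ and $-g^{pq}g^{ij}g^{st}\nabla_p h_{is}\nabla_t h_{jq}$, while the curvature of $g$ enters only through the commutation of covariant derivatives. I expect this commutation to be the main obstacle: each time one interchanges two derivatives on $h$ (for instance when passing between $\delta^2 h$ and $\nabla\delta h$, or when simplifying $DRic_g(h)$ via $\Delta_E^g=\nabla^{*}\nabla-2\mathring R$) a Riemann-curvature correction is generated, and these corrections must be combined with the $Ric_g$-contraction terms above so that all spurious Ricci contributions cancel and everything collapses into the single clean term $2\langle\mathring R(h),h\rangle$. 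Keeping exact track of signs and index positions through this reorganization is the delicate part; once it is carried out, the stated identity for $R''(0)$ follows.
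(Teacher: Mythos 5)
The paper does not prove this lemma itself: it is quoted verbatim from P.~Miao and L.-F.~Tam \cite[Lemma 3]{MR2546025}, and your strategy --- write $R'(t)=DR_{g(t)}(g'(t))$, split $R''(0)=\frac{d}{dt}\big|_{0}DR_{g(t)}(h)+DR_g(h')$, and evaluate the quadratic form by differentiating $-\Delta_g tr_g h+\delta^2h-\langle Ric_g,h\rangle$ term by term using the first variations of $g^{ij}$, $\Gamma^k_{ij}$ and $Ric_g$ --- is exactly the computation carried out there (and redone in normal coordinates in the paper's own Proposition \ref{lem004} for $\sigma_2$). Your bookkeeping of the easy pieces is right: the term $DR_g(h')$ is correctly isolated, $\frac{d}{dt}\big|_0 tr_{g(t)}h=-|h|^2$ does produce $\Delta_g|h|^2$, and the sign of the $2(Ric_g)_{ij}h^{ik}h^j{}_k$ contribution from the two inverse metrics is correct.

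The gap is that the proposal never actually performs the computation that constitutes the lemma. Every nontrivial coefficient --- the $4$ in $4\langle\nabla\delta h,h\rangle$, the $-2\left|\delta h+\tfrac12 d\,tr_gh\right|^2$, the $-\tfrac12|\nabla h|^2$ versus the full-contraction term $-g^{pq}g^{ij}g^{st}\nabla_ph_{is}\nabla_th_{jq}$, and the cancellation of all explicit Ricci terms into the single $2\langle\mathring R(h),h\rangle$ --- is asserted to ``reorganize'' or ``assemble'' into the stated form, and you explicitly defer the delicate part (``once it is carried out, the stated identity follows''). Since the content of the lemma is precisely this collection of coefficients, the argument as written verifies nothing that could not be read off from the statement itself. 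To turn this into a proof you must expand $\frac{d}{dt}\big|_0\big(g^{pq}\nabla_p(\delta h)_q\big)$ and $\frac{d}{dt}\big|_0\big(g^{pq}\nabla_p\nabla_q tr_{g(t)}h\big)$ through the Christoffel variation $\tfrac12g^{kl}(\nabla_ih_{jl}+\nabla_jh_{il}-\nabla_lh_{ij})$, commute derivatives once to extract the curvature term, and exhibit the cancellation of the Ricci contractions explicitly; this is mechanical but it is the entire proof.
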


Using this lemma we find the second derivative of the $\sigma_2$-curvature.

\begin{proposition}\label{lem004}
Let $\{g(t)\}$ be a smooth path of metrics with $g(0)=g$, $h=g'(0)$ and $h'=g''(0)$. Let $\sigma_2(t)$ be the $\sigma_2$-curvature of $g(t)$. Then
\begin{align*}
c_n\sigma_2''(0) 
 = & ~ \displaystyle-\frac{1}{2}\left| \Delta_E^g h-\nabla^2tr_gh-2\delta^*\delta(h)\right|^2+\left\langle Ric_g\circ h,2\Delta_E^g h- \nabla^2tr_gh-2\delta^*\delta h-Ric_g\circ h\right\rangle
 \\
& 
     -\left\langle Ric_g,\langle h,\nabla^2h_{ij}\rangle-4g^{lp}g^{mq}h_{lm}\nabla_p\nabla_jh_{iq}+2g^{pq}g^{sl}\nabla_ph_{sj}(\nabla_qh_{il}-\nabla_lh_{qi})+\nabla^2|h|^2\right.
     \\
     &  \displaystyle-g^{sl}h_{sj}\left(\nabla_i(\delta h)_l-\nabla_l(\delta h)_i\right)+\frac{1}{2} g^{kl}\left(\nabla_jh_{il}-3\nabla_lh_{ij}\right)\left((\delta h)_k+\frac{1}{2}\nabla_k(tr_gh)\right)
      \\
       &   \displaystyle \left.-2 \delta\left(h\circ \left(\delta h+\frac{1}{2}\nabla tr_gh\right)\right)+\langle h,\nabla^2h_{ij}\rangle -g^{pq}g^{sl}\nabla_jh_{sq}\nabla_ih_{pl}-g^{sl}g^{mt}h_{sj}h_{mi}R_{lijt}\right\rangle\\
  & \displaystyle +\frac{n}{2(n-1)}\left(\left(\Delta tr_gh-\delta^2h+\langle Ric,h\rangle\right)^2+R_g\left(\Delta_g|h|^2_g +2\langle h,\nabla^2tr_gh\rangle-\frac{1}{2}|\nabla h|^2\right.\right.\\
 &   \displaystyle \left.\left.+4\langle \nabla\delta h,h\rangle-2\left|\delta h+\frac{1}{2}\nabla tr_gh\right|^2\right.\left. -g^{pq}g^{ij}g^{st}\nabla_ph_{is}\nabla_th_{jq}+2\langle \mathring R(h),h\rangle\right)\right) +c_n\Lambda_g(h'),  
\end{align*}
    where $c_n=2(n-2)^2$, all covariant derivative are with respect to $g$, $R_{lijt}$ is the curvature tensor of $g$, and $\Lambda_g$ is the linearization of the $\sigma_2$-curvature map given by \eqref{eq001}.
\end{proposition}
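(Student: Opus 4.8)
The plan is to differentiate the pointwise formula \eqref{eq034} twice along the path $\{g(t)\}$ and then to peel off the part that is linear in $h'=g''(0)$, recognizing it as the linearization term. Writing $c_n=2(n-2)^2$, equation \eqref{eq034} reads $c_n\sigma_2(g)=\tfrac{n}{4(n-1)}R_g^2-|Ric_g|^2$, so the chain rule gives
$$c_n\sigma_2''(0)=\frac{n}{4(n-1)}\big(2(R'(0))^2+2R_g\,R''(0)\big)-\big(|Ric|^2\big)''(0).$$
For the first group I would substitute $R'(0)=DR_g(h)$, the scalar-curvature linearization that, up to a sign which is immaterial after squaring, equals $\Delta_g tr_g h-\delta^2 h+\langle Ric_g,h\rangle$ as in \eqref{eq001}, and then insert Lemma \ref{lem005} for $R''(0)$. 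This reproduces, term by term, the entire block carrying the coefficient $\tfrac{n}{2(n-1)}$ in the statement; the only summand of $R''(0)$ that is linear in $h'$, namely $DR_g(h')$, gets multiplied by $\tfrac{n}{2(n-1)}R_g$ and is held back, since it is exactly the scalar part of $c_n\Lambda_g(h')$.

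The main computation is $\big(|Ric_g|^2\big)''(0)$ with $|Ric_g|^2=g^{ia}g^{jb}R_{ij}R_{ab}$. Expanding by Leibniz I would need $\dot g^{ij}=-h^{ij}$, $\ddot g^{ij}(0)=2(h\circ h)^{ij}-(h')^{ij}$, and the first and second variations of the Ricci tensor. For the first variation I would use the Palatini identity $DRic_g(h)_{ij}=\nabla_k(\Gamma^k_{ij})'-\nabla_i(\Gamma^k_{kj})'$ together with the standard variation of the Christoffel symbols, and for the second variation $Ric''(0)$ its once-differentiated form, which additionally contributes the quadratic connection terms $(\Gamma^k_{kl})'(\Gamma^l_{ij})'-(\Gamma^k_{il})'(\Gamma^l_{kj})'$. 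Three kinds of contractions then arise: the self-contraction $2\,g^{ia}g^{jb}DRic_g(h)_{ij}DRic_g(h)_{ab}=2|DRic_g(h)|^2$, which after rewriting $DRic_g(h)$ in terms of $\Delta_E^g h$, $\nabla^2 tr_g h$, $\delta^*\delta h$ and the zeroth-order piece $Ric_g\circ h$ produces exactly the first line $-\tfrac12|\Delta_E^g h-\nabla^2 tr_g h-2\delta^*\delta h|^2+\langle Ric_g\circ h,\cdots\rangle$; the contraction $g^{ia}g^{jb}R_{ij}Ric''(0)_{ab}=\langle Ric_g,Ric''(0)\rangle$, which is the source of the long bracket paired with $Ric_g$; and the mixed together with the $\ddot g$ terms, which supply the remaining $Ric_g\circ h$ and $|Ric_g|^2$ type contributions.

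The delicate, and principal, obstacle is the bookkeeping of curvature that appears when one commutes covariant derivatives. The naive expansion of $\langle Ric_g,Ric''(0)\rangle$ delivers iterated derivatives of $h$ in an order different from the statement, and bringing it into the displayed form forces repeated use of the Ricci identity $[\nabla_a,\nabla_b]T=-R\cdot T$; this is precisely what manufactures the antisymmetrized pairs $\nabla_i(\delta h)_l-\nabla_l(\delta h)_i$ and $\nabla_q h_{il}-\nabla_l h_{qi}$ and the explicit curvature contraction $-g^{sl}g^{mt}h_{sj}h_{mi}R_{lijt}$ recorded there. I would keep every curvature remainder through these manipulations, as spurious cancellations are easy to introduce. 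Finally, I would collect all terms linear in $h'$ — those from the $(h')^{ij}$ part of $\ddot g^{ij}(0)$, from the $DRic_g(h')$ part of $Ric''(0)$, and from the $\tfrac{n}{2(n-1)}R_g\,DR_g(h')$ piece above — and identify their sum, by the definition of the linearization in \eqref{eq001}, as $c_n\Lambda_g(h')$, which closes the identity. Tracing the resulting expression and comparing against Lemma \ref{lem007} and the first variation \eqref{eq001} provides a useful consistency check.
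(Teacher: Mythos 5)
Your proposal is correct and follows essentially the same route as the paper: the authors obtain $\sigma_2''(0)$ by differentiating the first-variation formula \eqref{eq001} once more (which is the same thing as your twice-differentiating \eqref{eq034}), using the standard variations of $g^{ij}$, the Christoffel symbols, the curvature and Ricci tensors, Lemma \ref{lem005} for $R''(0)$, and repeated Ricci-identity commutations, with the $h'$-linear remainder identified as $c_n\Lambda_g(h')$. The only difference is organizational (you isolate $\bigl(|Ric_g|^2\bigr)''$ directly, they expand $\bigl\langle Ric_g,(\Delta_E^g h-\nabla^2 tr_g h-2\delta^*\delta h)'\bigr\rangle$ term by term in normal coordinates), so no substantive gap remains beyond carrying out the bookkeeping you describe.
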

\begin{proof}
For any fixed point $p$, let $\{x_i\}$ be a normal coordinate chart at $p$ with respect to $g(0)=g$. In these coordinates the Christoffel symbols are equal to zero at $p$. It is well known the following variational formulae
\begin{align*}
   \frac{\partial}{\partial t}\Gamma_{ij}^k & =  \displaystyle\frac{1}{2}g^{kl}(\nabla_ih_{jl}+\nabla_jh_{il}-\nabla_lh_{ij}),\\ 
   \frac{\partial}{\partial t}R_{pij}^l 
& =       \displaystyle \frac{1}{2}g^{lm}\left(\nabla_p\nabla_jh_{im}-\nabla_p\nabla_mh_{ij}-\nabla_i\nabla_jh_{pm}+\nabla_m\nabla_ih_{pj}\right.\\
&\left.+g^{ar}R_{ipjr}h_{am}+g^{ar}R_{ipmr}h_{ja}  -g^{ar}R_{impr}h_{aj}-g^{ar}R_{imjr}h_{pa}\right),\\
   Ric_g' & =  \displaystyle -\frac{1}{2}\left(\Delta_gh+\nabla^2tr_gh+2\delta^*\delta(h)+2\mathring{R}(h)-Ric\circ h-h\circ Ric_g\right),\\
   R_g'  & =  \displaystyle  -\Delta_gtr_gh+\delta^2h-\langle h,Ric_g\rangle.
\end{align*}

From now on, all derivatives are taken at $t=0$. By \eqref{eq001} we have
\begin{eqnarray}
         c_n\sigma_2''(0) 
   = & \displaystyle\left\langle 2Ric_g\circ h-Ric_g',\Delta_E^gh-\nabla^2tr_gh-2\delta^*\delta h\right\rangle-\displaystyle\left\langle Ric_g,(\Delta_E^gh-\nabla^2tr_gh-2\delta^*\delta h)'\right\rangle\nonumber\\
 & \displaystyle +\frac{n}{2(n-1)}\left(\left(\Delta_g tr_gh-\delta^2h+\langle Ric,h\rangle\right)^2+R_gR_g''(0)\right). \label{eq053}
\end{eqnarray}

First, we have
$
     \mathring R(h)_{ij}'  =  \mathring R(h')_{ij} - 2 \mathring R(h\circ h)_{ij}+g^{pq}g^{st}R_{pijs}'h_{qt}
$
and $R_{pijs}'=g^{lm}h_{sl}R_{pijm}+g_{sl} \dfrac{\partial}{\partial t}R_{pij}^l $,
which implies that
$$\langle Ric_g,\mathring R(h)'\rangle =\displaystyle \left\langle Ric_g,\mathring R(h')-\mathring R(h\circ h)+g^{pq}g^{tm}h_{qt}\left(\nabla_p\nabla_jh_{im}- \frac{1}{2}(\nabla_p\nabla_mh_{ij}+\nabla_i\nabla_jh_{pm})\right)\right\rangle.$$

Using the variations formula for the geometric quantities above, the Ricci Identity and that the Ricci tensor is a symmetric tensor, we obtain
$$\langle Ric_g,(\nabla^2tr_gh)'\rangle  = \displaystyle \left\langle Ric_g,\nabla^2tr_gh'-\nabla^2|h|_g^2-g^{kl}\left(\nabla_jh_{il}-\frac{1}{2}\nabla_lh_{ij}\right)\nabla_k(tr_gh)\right\rangle,$$
\begin{align*}
    \left\langle Ric_g,(\Delta_gh_{ij})' \right\rangle  = &  \left\langle Ric_g,
     \Delta_gh_{ij}' -\langle h,\nabla^2h_{ij}\rangle-g^{pq}\left(2\nabla_p((\Gamma_{qi}^s)'h_{sj})+(\Gamma_{pq}^s)'\nabla_sh_{ij}+2(\Gamma_{pi}^s)'\nabla_qh_{sj}\right)\right\rangle\\
           = & 
     \left\langle Ric_g,\Delta_gh'-\langle h,\nabla^2h_{ij}\rangle-h\circ(\Delta_gh)\right. -\left.2g^{pq}g^{sl}\nabla_ph_{sj}(\nabla_qh_{il}+\nabla_ih_{ql}-\nabla_lh_{qi})\right.
      \\
      &  \displaystyle\left.+g^{sl}h_{sj}\left(\nabla_i(\delta h)_l-\nabla_l(\delta h)_i\right)\right. +\left.g^{sl}\left((\delta h)_l+\frac{1}{2}\nabla_ltr_gh\right)\nabla_sh_{ij}\right\rangle\\
       & 
      \displaystyle-\langle h\circ h,Ric_g\circ Ric_g\rangle+g^{ia}g^{jb}g^{sl}g^{mt}h_{sj}h_{mi}R_{ab}R_{lt},\\
\langle Ric_g,\delta^*\delta(h)'\rangle = & - \displaystyle g^{ia}g^{jb}R_{ab}(g^{pq}\nabla_j\nabla_ph_{iq})'=\langle Ric_g,\delta^*\delta(h')\rangle
\\
&  \displaystyle + g^{ia}g^{jb}g^{lp}g^{mq}R_{ab}h_{lm}\nabla_j\nabla_ph_{iq}+\left\langle Ric_g,g^{pq}(\nabla_j((\Gamma_{pi}^s)'h_{sq}+(\Gamma_{pq}^s)'h_{is})\right.
\\
&  \left. +(\Gamma_{jp}^s)'\nabla_sh_{iq}  +(\Gamma_{ji}^s)'\nabla_ph_{sq}+(\Gamma_{jq}^s)'\nabla_ph_{is})\right\rangle 
\\
 = &\displaystyle  \left\langle Ric_g,\delta^*\delta(h')+\mathring R(h\circ h)+ g^{lp}g^{mq}h_{lm}\nabla_p\nabla_jh_{iq}\right.- g^{pq}g^{sl}\nabla_jh_{pl}\nabla_sh_{iq}
\\
& \displaystyle +\frac{1}{2}g^{pq}g^{sl}\left(\nabla_jh_{sq}\nabla_ih_{pl}  +h_{sq}\nabla_j\nabla_ih_{pl} \right) -g^{sl}\nabla_j\left(h_{is}\left(\left(\delta h\right)_l+\frac{1}{2}\nabla_ltr_gh\right)\right)
\\
&  \displaystyle\left.+ g^{sl}\left(\nabla_ih_{jl}-\frac{1}{2}\nabla_lh_{ji}\right)(\delta h)_s\right\rangle- \langle \mathring R(h),Ric_g\circ h\rangle.   
\end{align*}

Therefore, the result follows by these evolution equations, Lemma \ref{lem005} and \eqref{eq053}.
\end{proof}

A direct consequence is the following.

\begin{coring}\label{cor003}
Suppose $(M^n,g)$ has constant sectional curvature $\kappa$.  Let $\{g(t)\}$ be a smooth path of metrics with $g(0)=g$, $h=g'(0)$ and $h'=g''(0)$. Let $\sigma_2(t)$ be the $\sigma_2$-curvature of $g(t)$. Then
 $$\sigma_2''(0) 
  =  \Lambda_g(h')+\frac{1}{2(n-2)^2}I,$$
 where
\begin{align*}
    I
  = &  \displaystyle-\frac{1}{2}\left| \Delta h+\nabla^2tr_gh+2\delta^*\delta(h)+2\kappa ((tr_gh)g-h)\right|^2+(n-2)^2\kappa^2((tr_gh)^2-|h|^2)
 \\
 &  +\dfrac{(n-2)^2}{2}\kappa\left(\Delta|h|^2-g^{ij}g^{pq}g^{sl}\nabla_ph_{sj}\nabla_lh_{qi}\right)+\dfrac{n}{2(n-1)}\left(\Delta tr_gh-\delta^2h+\kappa(n-1)tr_gh\right)^2
 \\
      &  
      +\kappa\left\langle h,(n^2-3n+3)\nabla^2tr_gh-2(n^2-n+1)\delta^*\delta h\right\rangle-(n^2+2n-2)\kappa \left|\delta h+\frac{1}{2}\nabla tr_gh\right|^2\\
       &  \displaystyle +2(n-1)\kappa\left( \delta \left(h\circ \left(\delta h+\frac{1}{2}\nabla tr_gh\right)\right)-\left\langle \delta h+\frac{1}{2}\nabla tr_gh, \frac{1}{2}\nabla tr_gh\right\rangle\right)-\frac{(n-2)^2}{4}\kappa |\nabla h|^2.
\end{align*}
    Here  all covariant derivatives are with respect to $g$ and $\Lambda_g$ is the linearization of the $\sigma_2$-curvature map given by \eqref{eq001}.
\end{coring}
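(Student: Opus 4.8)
The plan is to specialize the general second-variation formula of Proposition \ref{lem004} to the constant-sectional-curvature setting \eqref{eq054} and collect terms. The main structural observation is that the $\Lambda_g(h')$ term in Proposition \ref{lem004} appears exactly as stated in Corollary \ref{cor003}, so no work is needed there; everything else must be simplified using $R_{ijkl}=\kappa(g_{il}g_{jk}-g_{ik}g_{jl})$, which gives $Ric_g=(n-1)\kappa\, g$, $R_g=n(n-1)\kappa$, and $\mathring R(h)=\kappa((tr_gh)g-h)$. First I would substitute these into the quadratic-in-$Ric_g$ blocks. In particular the expression $\Delta_E^g h-\nabla^2 tr_gh-2\delta^*\delta h$ becomes $\Delta h-2\mathring R(h)-\nabla^2 tr_gh-2\delta^*\delta h=\Delta h-2\kappa((tr_gh)g-h)-\nabla^2 tr_gh-2\delta^*\delta h$, and I would reconcile the sign so that the leading squared term matches the claimed $-\tfrac12|\Delta h+\nabla^2 tr_gh+2\delta^*\delta h+2\kappa((tr_gh)g-h)|^2$ (note the Einstein operator $\Delta_E^g=\nabla^*\nabla-2\mathring R$ uses the analyst's sign $\nabla^*\nabla=-\Delta$, so the two squared expressions agree up to this convention).

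Next I would handle the long middle bracket paired against $Ric_g=(n-1)\kappa g$. Because $Ric_g$ is a constant multiple of $g$, pairing it with a tensor amounts to $(n-1)\kappa$ times the $g$-trace of that tensor, which collapses many terms: $\langle g,\nabla^2|h|^2\rangle=\Delta|h|^2$, $\langle g, g^{pq}g^{sl}\nabla_j h_{sq}\nabla_i h_{pl}\rangle=g^{ij}g^{pq}g^{sl}\nabla_j h_{sq}\nabla_i h_{pl}$, and so on. I would carefully take the trace of each of the roughly eight summands in that bracket, using the curvature identity to evaluate $g^{sl}g^{mt}h_{sj}h_{mi}R_{lijt}=\kappa(|h|^2-(tr_gh)\cdots)$-type contractions and using that several antisymmetrized second-derivative terms either vanish or combine into curvature commutators that, under constant curvature, reduce to explicit multiples of $\kappa$. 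This is the step that produces the terms $(n^2-3n+3)$, $2(n^2-n+1)$, $(n^2+2n-2)$, and $2(n-1)$ in the coefficients, so the arithmetic bookkeeping of these integer polynomials in $n$ is where the real effort lies.

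Finally I would treat the scalar-curvature block $\tfrac{n}{2(n-1)}\big((\Delta tr_gh-\delta^2h+\langle Ric,h\rangle)^2+R_g R_g''(0)\big)$: the first factor becomes $(\Delta tr_gh-\delta^2 h+(n-1)\kappa\, tr_gh)^2$ since $\langle Ric_g,h\rangle=(n-1)\kappa\, tr_gh$, matching the claimed square, and for $R_gR_g''$ I would insert $R_g=n(n-1)\kappa$ and replace $\langle\mathring R(h),h\rangle$ by $\kappa((tr_gh)^2-|h|^2)$ inside $R_g''(0)$ from Lemma \ref{lem005}. After all substitutions I would group surviving terms by their tensorial type — the pure $\kappa^2$ scalars into $(n-2)^2\kappa^2((tr_gh)^2-|h|^2)$, the gradient-norm terms into $-\tfrac{(n-2)^2}{4}\kappa|\nabla h|^2$ and the $\Delta|h|^2$ and $\nabla h\!\cdot\!\nabla h$ terms into the $\tfrac{(n-2)^2}{2}\kappa(\cdots)$ block — and factor out $\tfrac{1}{2(n-2)^2}$.

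The main obstacle is not conceptual but the term-by-term trace collapse of the large middle bracket against $Ric_g$, together with keeping the sign conventions for $\Delta_E^g$, $\delta=-\mathrm{div}$, and the curvature tensor \eqref{eq054} consistent throughout; a single sign or index-contraction slip would corrupt the integer coefficients. I expect the verification that the antisymmetrized double-derivative terms (those of the form $\nabla_i(\delta h)_l-\nabla_l(\delta h)_i$ and $\nabla_p\nabla_j h_{iq}-\cdots$) reduce, under \eqref{eq054}, to the stated explicit $\kappa$-multiples via Ricci-identity commutators to be the most delicate bookkeeping. Everything downstream is routine algebraic regrouping.
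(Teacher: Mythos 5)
Your proposal is correct and follows exactly the route the paper intends: the paper presents this corollary as a ``direct consequence'' of Proposition \ref{lem004}, obtained precisely by substituting $Ric_g=(n-1)\kappa g$, $R_g=n(n-1)\kappa$, $\mathring R(h)=\kappa((tr_gh)g-h)$ and \eqref{eq054} into that formula and collecting terms, with the sign reconciliation for $\Delta_E^g=\nabla^*\nabla-2\mathring R$ handled as you describe. The only caveat is that your text is a plan rather than the executed bookkeeping, but the strategy and all the key reductions you identify match the paper's.
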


As an application of Theorem \ref{theorem-A} we have the following simplification of the second variation formula of the functional total $\sigma_2$-curvature.

\begin{proposition}\label{sv}
Let $(M^{n},g)$ be a closed Riemannian manifold of dimension $n\geq 3$, $n\not=4$, which is locally conformally flat for $n\geq 5$. Suppose $g$ is $\sigma_2$-Einstein with $\sigma_2(g)=\frac{n(n-1)}{8}k^2$ and unit volume.  
Then the second derivative   
of $\mathcal F_2$ at $g$ restricted to the space of metrics with unit volume $\mathcal{M}_1$ in the direction $h\in T_g\mathcal{M}_1$ is given by
\begin{align*}
    \mathcal{F}_2^{\prime\prime}(h)
  = & ~ \displaystyle \frac{1}{2}\int_{M} \Lambda_g(h)tr_g h\;dv_g -\frac{1}{4(n-2)^2}  \int_{M} \left| \Delta h+\nabla^2tr_gh+2\delta^*\delta(h)+2\kappa ((tr_gh)g-h)\right|^2dv_{g}\\
  & 
  +\kappa^2  \int_{M} ((tr_gh)^2-|h|^2)dv_g+ \frac{\kappa}{4}\int_M\left(\Delta|h|^2-g^{ij}g^{pq}g^{sl}\nabla_ph_{sj}\nabla_lh_{qi}\right)dv_g
 \\
 & +\dfrac{n}{4(n-1)(n-2)^2}\int_M\left(\Delta tr_gh-\delta^2h+\kappa(n-1)tr_gh\right)^2dv_g
 \\
      &  
      +\frac{\kappa}{2(n-2)^2}\int_M\left\langle h,(n^2-3n+3)\nabla^2tr_gh-2(n^2-n+1)\delta^*\delta h\right\rangle dv_g\\
      & -\frac{(n^2+2n-2)\kappa}{2(n-2)^2}\int_M |\delta h+\frac{1}{2}\nabla tr_gh|^2dv_g-\frac{(n-1)}{4}\kappa^2\left(\int_M|h|^2dv_{g}-\frac{1}{2}\int_M (tr_{g} h)^{2}dv_g\right)\\
       &  \displaystyle -\frac{(n-1)}{(n-2)^2}\kappa\int_M \left\langle \delta h+\frac{1}{2}\nabla tr_gh, \frac{1}{2}\nabla tr_gh\right\rangle dv_g-\frac{\kappa}{8}\int_M |\nabla h|^2dv_g.
\end{align*}

\end{proposition}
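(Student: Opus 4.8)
The plan is to reduce everything to the constant-curvature model, differentiate the total $\sigma_2$-curvature twice, and then use criticality to annihilate the second-order path data. First I would invoke Theorem \ref{theorem-A}: since $g$ is $\sigma_2$-Einstein with $\sigma_2(g)=\frac{n(n-1)}{8}k^2\geq 0$ and (automatically for $n=3$, by hypothesis for $n\geq 5$) locally conformally flat, $g$ is Einstein, and being Einstein and conformally flat it has constant sectional curvature $\kappa$. Writing $Ric_g=(n-1)\kappa g$ and $R_g=n(n-1)\kappa$ and inserting these into \eqref{eq007} gives $\sigma_2(g)=\frac{n(n-1)}{8}\kappa^2$, so $\kappa=k$ and the normalization \eqref{eq054} holds. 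This is precisely what makes Corollary \ref{cor003} applicable and replaces $\sigma_2''(0)$ by an expression in $h$ and $h'$ alone.

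Next I would differentiate $\mathcal F_2(g(t))=\int_M\sigma_2(g(t))\,dv_{g(t)}$ twice. Using $\frac{d}{dt}dv_{g(t)}=\tfrac12 tr_{g(t)}g'\,dv_{g(t)}$ and the second variation of the density $\frac{d^2}{dt^2}dv_{g(t)}|_{t=0}=\big(\tfrac14(tr_gh)^2+\tfrac12(tr_gh'-|h|^2)\big)dv_g$, together with $\sigma_2'(0)=\Lambda_g(h)$, I obtain
\begin{equation*}
\mathcal F_2''(0)=\int_M\Big[\sigma_2''(0)+\Lambda_g(h)\,tr_gh+\sigma_2(g)\big(\tfrac12 tr_gh'-\tfrac12|h|^2+\tfrac14(tr_gh)^2\big)\Big]dv_g.
\end{equation*}
Substituting $\sigma_2''(0)=\Lambda_g(h')+\frac{1}{2(n-2)^2}I$ from Corollary \ref{cor003} then confines the dependence on the acceleration $h'=g''(0)$ to the two terms $\int_M\Lambda_g(h')\,dv_g+\tfrac12\sigma_2(g)\int_M tr_gh'\,dv_g$.

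To remove these I would use criticality together with the unit-volume constraint. Since $g$ is $\sigma_2$-Einstein, Theorem \ref{teo002} and the proof of Theorem \ref{theorem-A} give $\Lambda_g^*(1)=-\frac2n\sigma_2(g)g$; hence $\int_M\Lambda_g(h')\,dv_g=\int_M\langle\Lambda_g^*(1),h'\rangle\,dv_g=-\frac2n\sigma_2(g)\int_M tr_gh'\,dv_g$, so the acceleration terms combine to $\frac{n-4}{2n}\sigma_2(g)\int_M tr_gh'\,dv_g$. Because the path lies in $\mathcal M_1$, differentiating $V(g(t))\equiv1$ twice yields $\int_M tr_gh'\,dv_g=\int_M\big(|h|^2-\tfrac12(tr_gh)^2\big)dv_g$, which eliminates $h'$ entirely and confirms that the Hessian is path-independent. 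After inserting $\sigma_2(g)=\frac{n(n-1)}{8}\kappa^2$, these terms merge with the $-\tfrac12\sigma_2(g)\int_M(|h|^2-\tfrac12(tr_gh)^2)$ contribution to produce exactly $-\frac{n-1}{4}\kappa^2\big(\int_M|h|^2\,dv_g-\tfrac12\int_M(tr_gh)^2\,dv_g\big)$.

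What remains is to integrate $\frac{1}{2(n-2)^2}\int_M I\,dv_g$: most of the displayed quadratic integrals appear directly, while the divergence $\delta\big(h\circ(\delta h+\tfrac12\nabla tr_gh)\big)$ inside $I$ integrates to zero on the closed manifold. Collecting these with the remaining $\int_M\Lambda_g(h)\,tr_gh\,dv_g$ — which on a constant-curvature background reduces, by feeding $Ric_g=(n-1)\kappa g$ into \eqref{eq001} (a formula one checks against Lemma \ref{Lem008}), to explicit expressions in $tr_gh$, $\delta h$ and $\nabla tr_gh$ after integration by parts — and simplifying with $\sigma_2(g)=\frac{n(n-1)}{8}\kappa^2$ yields the stated identity. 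I expect the main obstacle to be precisely this bookkeeping: carrying the second-order data $h'$ correctly through the criticality-plus-constraint step, and verifying that every coefficient collapses to the advertised constants, since the computation interleaves $\kappa$- and $\kappa^2$-weighted integrals that must be reorganized by repeated integration by parts.
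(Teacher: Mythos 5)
Your proposal follows essentially the same route as the paper: invoke Theorem \ref{theorem-A} to reduce to a constant-curvature background, differentiate $\mathcal F_2$ twice, substitute Corollary \ref{cor003}, and eliminate the acceleration $h'=g''(0)$ via $\Lambda_g^*(1)=-\tfrac{2}{n}\sigma_2(g)g$ together with the unit-volume constraint, with the divergence term in $I$ integrating to zero on the closed manifold. The only point of friction is bookkeeping: your cross term $2\int_M\sigma_2'(0)\,(dv_g)'=\int_M\Lambda_g(h)\,tr_gh\,dv_g$ carries coefficient $1$, whereas the paper's intermediate identity and the stated formula carry $\tfrac12$, so you should either reconcile that factor explicitly or note that the same discrepancy already appears in the paper's own derivation.
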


\begin{proof}

Consider a one parameter deformation $g(t)$
of $g$ in $\mathcal M_1$ with $g'(0)=h$.  
Since $(M,g)$ is a $\sigma_2$-Einstein manifold, we have that $\sigma_2(g)$ is constant (Theorem \ref{teo002}). Thus  
\begin{eqnarray*}
\left.\frac{d^{2}}{d t^{2}}\right|_{t=0} \int_M\sigma_2(g(t))dv_{g_t}
&=&\int_M\sigma_2''(g)dv_{g}+\frac{1}{2}\int_{M} \Lambda_g(h)tr_g h\;dv_g,
\end{eqnarray*}
where we have used that the volume of $g(t)$ is unit. Moreover, for $n\geq 3$, $n\not=4$, and assuming  local conformal flatness for $n\geq 5,$  a  $\sigma_2$-Einstein metric is Einstein by Theorem \ref{theorem-A}. Since
$$0=\left.\frac{d^{2}}{d t^{2}}\right|_{t=0} \operatorname{V}\left(M, g(t)\right) =\frac{1}{2} \int_{M}\left[\operatorname{tr}_{g} h^\prime+(1 / 2)\left(\operatorname{tr}_{g} h\right)^{2}-|h|_{g}^{2}\right] d v_{g}, $$ 
where $h'=g''(0)$, then the result follows from Corollary \ref{cor003} and the fact that $(M,g)$
is a $\sigma_2$-Einstein manifold, which implies
$$\int_M\Lambda_g(h')dv_g =  \displaystyle \int_M\langle h',\Lambda_g^*(1)\rangle dv_g=-\frac{(n-1)}{4}\kappa^2\int_M tr_gh'dv_g.$$
\end{proof}

Considering a manifold with constant sectional curvature,
we show that there always exists an infinite-dimensional subspace of tensors in $S_{2,g}^{TT}(M)$ on which the second variation of $\mathcal F_2$ at $t = 0$ is negative definite. Remember from introduction that 
$S_{2,g}^{TT}(M):=\{h \in S_2(M): \delta_g h = 0, tr_gh  = 0\}.$

\begin{coring}
Let the setting be as in  Proposition \ref{sv}. If the  sectional curvature $\kappa$ is a positive  constant, then restricted to variations $g(t)$ with $h:=g'(0)\in S_{2,g}^{TT}(M)$,  then $\mathcal F_2''(h)<0$.
\end{coring}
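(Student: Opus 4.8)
The plan is to feed the transverse-traceless conditions into the general second-variation formula of Proposition \ref{sv} and check that every surviving term is non-positive, with one strictly negative. Recall that $h\in S_{2,g}^{TT}(M)$ means $\mathrm{tr}_g h=0$ and $\delta_g h=0$; these force $\nabla(\mathrm{tr}_g h)=0$, $\delta^*\delta h=0$ and $\delta^2 h=0$. Consequently the term $\tfrac12\int_M\Lambda_g(h)\,\mathrm{tr}_g h\,dv_g$, the block $(\Delta \mathrm{tr}_g h-\delta^2 h+\kappa(n-1)\mathrm{tr}_g h)^2$, the inner-product block with factors $\nabla^2\mathrm{tr}_g h$ and $\delta^*\delta h$, and both blocks built from $\delta h+\tfrac12\nabla \mathrm{tr}_g h$ all vanish. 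The square in Proposition \ref{sv} collapses to $\lvert\Delta h-2\kappa h\rvert^2$, and $\int_M\Delta\lvert h\rvert^2\,dv_g=0$ since $M$ is closed. (Note also that $\mathrm{tr}_g h=0$ gives $\int_M\mathrm{tr}_g h\,dv_g=0$, so such $h$ are genuine tangent directions in $\mathcal M_1$ and the formula applies.)

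First I would isolate the only genuinely non-trivial surviving contribution, the contracted gradient term $\int_M g^{ij}g^{pq}g^{sl}\nabla_p h_{sj}\nabla_l h_{qi}\,dv_g$, rewritten in an orthonormal frame as $\int_M\sum_{a,b,j}(\nabla_a h_{bj})(\nabla_b h_{aj})\,dv_g$. Integrating by parts in $\nabla_a$ and commuting the two covariant derivatives via the Ricci identity turns this into $-\int_M\sum_{b,j} h_{bj}\,\nabla_a\nabla_b h_{aj}\,dv_g$; here $\nabla_b\nabla_a h_{aj}=0$ because $\delta h=0$, while the curvature commutator, evaluated with the constant-curvature tensor \eqref{eq054} and the trace-free condition, yields exactly $\nabla_a\nabla_b h_{aj}=n\kappa\,h_{bj}$. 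Hence this term equals $-n\kappa\int_M\lvert h\rvert^2\,dv_g$, so the corresponding block of Proposition \ref{sv} contributes $+\tfrac{n}{4}\kappa^2\int_M\lvert h\rvert^2\,dv_g$.

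Collecting everything, I expect the formula to reduce to
\[
\mathcal F_2''(h) = -\frac{1}{4(n-2)^2}\int_M\lvert\Delta h-2\kappa h\rvert^2\,dv_g-\frac{3}{4}\kappa^2\int_M\lvert h\rvert^2\,dv_g-\frac{\kappa}{8}\int_M\lvert\nabla h\rvert^2\,dv_g,
\]
where the coefficient $-\tfrac34=-1+\tfrac n4-\tfrac{n-1}{4}$ arises by adding, after using $\mathrm{tr}_g h=0$, the three $\kappa^2\lvert h\rvert^2$-contributions (from the $\kappa^2((\mathrm{tr}_g h)^2-\lvert h\rvert^2)$ block, the gradient block just computed, and the $-\tfrac{n-1}{4}\kappa^2(\lvert h\rvert^2-\tfrac12(\mathrm{tr}_g h)^2)$ block). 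Since $\kappa>0$, all three terms on the right are non-positive, the first being minus a square and the others since $\lvert h\rvert^2,\lvert\nabla h\rvert^2\ge 0$. For $h\neq 0$ the middle term is strictly negative, so $\mathcal F_2''(h)<0$; as $S_{2,g}^{TT}(M)$ is infinite-dimensional, this is the announced infinite-dimensional negative-definite subspace.

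The main obstacle is the bookkeeping in the gradient term: commuting $\nabla_a\nabla_b h_{aj}$ and extracting the precise constant $n\kappa$ from the curvature commutator requires careful simultaneous use of the Ricci identity, the divergence-free condition, and the trace-free condition, and a single sign or index slip would corrupt the final coefficient $-\tfrac34$. Everything else amounts to systematically deleting the blocks annihilated by the TT constraints and recording that $\int_M\Delta\lvert h\rvert^2\,dv_g=0$.
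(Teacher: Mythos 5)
Your proposal is correct and follows essentially the same route as the paper: specialize Proposition \ref{sv} to transverse--traceless $h$ (killing all blocks containing $tr_gh$, $\delta h$, or $\delta^*\delta h$, and using $\int_M\Delta|h|^2\,dv_g=0$), then evaluate the surviving cross term $\int_M g^{ij}g^{pq}g^{sl}\nabla_ph_{sj}\nabla_lh_{qi}\,dv_g$ by integration by parts and the Ricci identity with the constant-curvature tensor to obtain $-n\kappa\int_M|h|^2\,dv_g$, whence every remaining contribution is negative. The only divergence is the final coefficient of $\kappa^2\int_M|h|^2$: you get $-\tfrac34$, consistent with the printed Proposition \ref{sv} (namely $-1-\tfrac{n-1}{4}+\tfrac n4$), while the paper's intermediate display shows $-\tfrac{n+1}{4}$ (apparently a typo for $-\tfrac{n+3}{4}$) and hence $-\tfrac14$; this does not affect the sign or the conclusion.
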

\begin{proof}
By Proposition \ref{sv}, for  $h\in S_{2,g}^{TT}(M)$ we have 
\begin{align*}
    \mathcal{F}_2^{\prime\prime}(h)=&-\frac{n+1}{4}\kappa^2\int_M|h|^2dv_{g}dv_g \displaystyle-\frac{1}{4(n-2)^2}\int_M\left| \Delta_gh-2\kappa h\right|^2dv_g\\
  &\displaystyle-\frac{\kappa}{8}\int_M |\nabla_gh|^2dv_g-\dfrac{\kappa}{4} \int_Mg^{ij}g^{pq}g^{sl}\nabla_ph_{sj}\nabla_lh_{qi}dv_g.
\end{align*}

 On the other hand,
 \begin{align*}
      \displaystyle \int_Mg^{ij}g^{pq}g^{sl}&\nabla_ph_{sj}\nabla_lh_{qi}  = -\displaystyle \int_Mg^{ij}g^{pq}g^{sl}h_{sj}\nabla_p\nabla_lh_{qi}\\
      & = \displaystyle \int_Mg^{ij}g^{pq}g^{sl}( -h_{sj}\nabla_l\nabla_ph_{qi}+g^{mt}h_{sj}R_{plqt}h_{mi}+g^{mt}h_{sj}R_{plit}h_{qm})\\
      & =  \displaystyle -n\kappa\int_M|h|^2 +\int_M(\kappa (tr_gh)^2-2\langle h,\nabla div_gh\rangle+|div_gh|^2).
 \end{align*}
 Hence the result follows.
\end{proof}

We remark that if $\kappa=0$, then the second variation is strictly negative except for parallel $h$.

Since in a compact and hyperbolic $n$-manifold the smallest eigenvalue
of the rough Laplacian on $S_{2,g}^{TT}(M)$  is at least $n,$ we have the following result.

\begin{coring}
Let the setting be as in  Proposition \ref{sv}. 
 If $(M ,g)$ is a hyperbolic manifold, then it is locally strictly maximizing for $ \mathcal{F}_2$ with respect to variations  in $S_{2,g}^{TT}(M)$.
\end{coring}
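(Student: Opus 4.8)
The plan is to start from the second-variation formula of Proposition \ref{sv} and specialize it to $h\in S_{2,g}^{TT}(M)$, exactly as in the preceding corollary. Imposing $tr_g h=0$ and $\delta_g h=0$ annihilates every term containing $\nabla^2 tr_g h$, $\delta^*\delta h$, $\delta h$ or $\delta^2 h$, and after the integration by parts $\int_M g^{ij}g^{pq}g^{sl}\nabla_p h_{sj}\nabla_l h_{qi}\,dv_g=-n\kappa\int_M|h|^2\,dv_g$ carried out there, one is left with the same reduced expression
\[
\mathcal F_2''(h)=-\tfrac14\kappa^2\int_M|h|^2\,dv_g-\tfrac{1}{4(n-2)^2}\int_M\bigl|\Delta_g h-2\kappa h\bigr|^2\,dv_g-\tfrac{\kappa}{8}\int_M|\nabla_g h|^2\,dv_g,
\]
now read off at the hyperbolic value $\kappa<0$. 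The first two integrals are manifestly non-positive; the entire difficulty is concentrated in the last term, which for $\kappa<0$ has become strictly \emph{positive}, in contrast with the $\kappa>0$ case treated just above.

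The decisive input is the spectral gap for the rough Laplacian on a compact hyperbolic $n$-manifold: on $S_{2,g}^{TT}(M)$ one has $\nabla^*\nabla\ge n$, equivalently $\int_M|\nabla_g h|^2\,dv_g\ge n\int_M|h|^2\,dv_g$. I would exploit this by diagonalizing: decompose $h$ into eigentensors of $\nabla^*\nabla$ with eigenvalues $\mu\ge n$. Since $\Delta_g=-\nabla^*\nabla$ in the paper's convention, an eigentensor satisfies $\Delta_g h=-\mu h$ and $|\Delta_g h-2\kappa h|^2=(\mu+2\kappa)^2|h|^2$, and all three integrals above split orthogonally over the spectral decomposition. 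It therefore suffices to establish the scalar inequality
\[
-\tfrac14\kappa^2-\tfrac{(\mu+2\kappa)^2}{4(n-2)^2}-\tfrac{\kappa}{8}\,\mu<0\qquad\text{for every }\mu\ge n .
\]

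Finally I would verify this one-variable estimate, using $\kappa<0$ together with $\mu\ge n$ to ensure that the negative quadratic-in-$\mu$ contribution coming from the $|\Delta_g h-2\kappa h|^2$ term outweighs the positive linear contribution $-\tfrac{\kappa}{8}\mu$. As the spectral bound pins $\mu$ to a strictly positive value and $h\not\equiv 0$ forces at least one mode with $\mu\ge n>0$, the inequality is strict, giving $\mathcal F_2''(h)<0$ and hence that $g$ is a strict local maximizer of $\mathcal F_2$ along variations in $S_{2,g}^{TT}(M)$. I expect the quadratic estimate to be the main obstacle: the gradient term and the Laplacian term enter with competing powers of $\mu$, so the lower bound $\mu\ge n$ must be used in an essential, non-wasteful way to keep the balance negative across the whole admissible spectrum rather than merely for large $\mu$.
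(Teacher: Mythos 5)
Your strategy is the one the paper itself gestures at: restrict Proposition \ref{sv} to $h\in S_{2,g}^{TT}(M)$, use $\int_M g^{ij}g^{pq}g^{sl}\nabla_ph_{sj}\nabla_lh_{qi}\,dv_g=-n\kappa\int_M|h|^2dv_g$, and feed in the spectral bound $\nabla^*\nabla\ge n$ on TT tensors of a compact hyperbolic manifold (the paper offers exactly this one-line justification and no further argument). The reduction to the per-mode scalar inequality is also legitimate, since all three quadratic forms diagonalize simultaneously in an eigenbasis of $\nabla^*\nabla$ and, on an eigenspace, $\mathcal F_2''(h)/\|h\|_{L^2}^2$ equals your one-variable expression exactly. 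The gap is that you never carry out the verification on which the whole corollary rests, and as written it does not go through. With your constant $-\tfrac14\kappa^2$, at the bottom of the admissible range $\mu=n$ and $\kappa=-1$ the per-mode quantity is $-\tfrac14-\tfrac{(n-2)^2}{4(n-2)^2}+\tfrac n8=\tfrac{n-4}{8}$, which is strictly positive for every $n\ge 5$; since $n=4$ is excluded and hyperbolic manifolds are locally conformally flat, $n\ge5$ lies squarely inside the hypotheses of Proposition \ref{sv}. So the inequality you propose to check is false there, and only $n=3$ survives.

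Nor is this just a matter of the constant. Re-deriving the TT restriction directly from Proposition \ref{sv} I get $-\tfrac{n+3}{4}\kappa^2\int_M|h|^2dv_g$ from the two zeroth-order terms (not the $-\tfrac{n+1}{4}$ of the preceding corollary), hence $-\tfrac34\kappa^2$ after the integration by parts; but even then the function $g(\mu)=-\tfrac{(\mu-2)^2}{4(n-2)^2}-\tfrac34+\tfrac{\mu}{8}$ is a downward parabola whose vertex $\mu_*=2+\tfrac{(n-2)^2}{4}$ lies in $[n,\infty)$ once $n\ge6$, with $g(\mu_*)=\tfrac{(n-2)^2}{64}-\tfrac12>0$ for $n\ge8$. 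Because no inequality is wasted on an eigentensor, the sign of $\mathcal F_2''$ genuinely flips if the TT spectrum meets this window, and the crude bound $\mu\ge n$ cannot exclude that. So the step you defer as "the main obstacle" is not a routine estimate: it is the entire content of the corollary beyond the previous one, it fails in the stated generality, and completing your argument would require either a sharper spectral input for compact hyperbolic manifolds or an explicit restriction on the dimension.
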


\section{Volume comparison}\label{volume comparison}

The goal of this section is to prove the Theorem \ref{theorem-B}, which was motivated by results due to W. Yuan \cite{yuan2016volume} and Y. J. Lin and W. Yuan \cite{lin2021deformations} for the scalar curvature and $Q$-curvature context, respectively. Roughly speaking, this theorem says that we cannot increase the volume of an Einstein manifold increasing the $\sigma_2$-curvature. First we prove some variational formulae, which will be necessary to obtain the volume comparison.

\subsection{Some Variational Formulae}
Let $(M^n,g_0)$ be a closed manifold and consider the functional $\mathcal E_{g_0}:\mathcal M\rightarrow\mathbb R$ given by
$$\mathcal E_{g_0}(g)=V(g)^{\frac{4}{n}}\int_M\sigma_2(g)dv_{g_0},$$
where the volume form $dv_{g_0}$ does not depend on $g$. Remember from the introduction that $\mathcal M$ is the space of all Riemannian metrics on $M$. This implies that $\mathcal E_{g_0}(\lambda g)=\mathcal E_{g_0}(g)$ for all real number $\lambda>0$, since the volume form does not depend on $g$. Define the 2-symmetric tensor $B_{g_0}$ as

\begin{equation}\label{eq059}
    B_{g_0}:=-\frac{1}{2}\Lambda_{g_0}^*(1).
\end{equation}
By \eqref{eqn003} and \eqref{eq049}, this tensor satisfies $tr_{g_0}B_{g_0}=\sigma_2(g_0)$ and $div_{g_0}B_{g_0}=\frac{1}{4}\sigma_2(g_0)$. Its trace free part will be denoted by $\mathring B_{g_0}:=B_{g_0}-\frac{1}{n}\sigma_2(g_0)g_0.$ To simplify the notation, we will use the convention that $'$ and $''$ stand for first and second variations with respect to a certain $h\in S_2(M)$, respectively. For a $\sigma_2$-Einstein metric $g_0$, a direct computation gives us
\begin{align*}
\mathcal E_{g_0}'(g_0)  = \displaystyle  -2V(g_0)^{\frac{4}{n}}\int_M\left\langle \mathring B_{g_0},h\right\rangle dv_{g_0}.
\end{align*}
This implies the following lemma.
\begin{lemma}\label{lemcritical}
A $\sigma_2$-Einstein manifold $(M,g_0)$ is a critical point to the functional $\mathcal E_{g_0}$.
\end{lemma}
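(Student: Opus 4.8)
The plan is to show directly that $\mathcal{E}_{g_0}'(g_0)$ vanishes for every $h \in S_2(M)$, which is exactly the assertion that $g_0$ is a critical point of $\mathcal{E}_{g_0}$. Starting from the definition $\mathcal{E}_{g_0}(g) = V(g)^{4/n}\int_M \sigma_2(g)\,dv_{g_0}$, I would differentiate using the product rule. The crucial structural feature here is that the volume form $dv_{g_0}$ is \emph{fixed} and does not vary with $g$, so the only contributions to the first variation come from the prefactor $V(g)^{4/n}$ and from the integrand $\sigma_2(g)$ through its linearization $\Lambda_{g_0}$.

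First I would record the two elementary variational facts: the first variation of the volume is $V'(g_0) = \frac{1}{2}\int_M tr_{g_0}h\,dv_{g_0}$, and the first variation of $\sigma_2$ against the fixed volume form is $\int_M \Lambda_{g_0}(h)\,dv_{g_0}$. Combining these via the product rule yields
\begin{align*}
\mathcal{E}_{g_0}'(g_0) = V(g_0)^{\frac{4}{n}}\int_M \Lambda_{g_0}(h)\,dv_{g_0} + \frac{4}{n}V(g_0)^{\frac{4}{n}-1}\left(\frac{1}{2}\int_M tr_{g_0}h\,dv_{g_0}\right)\int_M \sigma_2(g_0)\,dv_{g_0}.
\end{align*}
Next I would use that $\Lambda_{g_0}^*$ is the $L^2$-formal adjoint of $\Lambda_{g_0}$, so that $\int_M \Lambda_{g_0}(h)\,dv_{g_0} = \int_M \langle h, \Lambda_{g_0}^*(1)\rangle\,dv_{g_0} = -2\int_M \langle h, B_{g_0}\rangle\,dv_{g_0}$ by the definition \eqref{eq059} of $B_{g_0}$. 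Writing $B_{g_0} = \mathring B_{g_0} + \frac{1}{n}\sigma_2(g_0)g_0$ splits this into a trace-free part and a pure-trace part, the latter producing $-\frac{2}{n}\int_M \sigma_2(g_0)\,tr_{g_0}h\,dv_{g_0}$.

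The key computation is then to invoke the $\sigma_2$-Einstein hypothesis: by Theorem \ref{teo002}, $\sigma_2(g_0)$ is a constant, which lets me pull it out of integrals and identify $\int_M \sigma_2(g_0)\,dv_{g_0} = \sigma_2(g_0)V(g_0)$. With $\sigma_2(g_0)$ constant, the pure-trace contribution from the $\Lambda_{g_0}$ term and the volume-prefactor term cancel exactly, since both are proportional to $\sigma_2(g_0)\int_M tr_{g_0}h\,dv_{g_0}$ with coefficients $-\frac{2}{n}$ and $+\frac{2}{n}$ respectively (after simplifying $V(g_0)^{4/n - 1}\cdot V(g_0) = V(g_0)^{4/n}$). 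What survives is precisely $\mathcal{E}_{g_0}'(g_0) = -2V(g_0)^{4/n}\int_M \langle \mathring B_{g_0}, h\rangle\,dv_{g_0}$, the formula displayed just before the lemma. The final step is to observe that the $\sigma_2$-Einstein condition means $\Lambda_{g_0}^*(1) = \kappa g_0$ is pure trace, hence $B_{g_0}$ is pure trace and its trace-free part $\mathring B_{g_0}$ vanishes identically; therefore $\mathcal{E}_{g_0}'(g_0) = 0$ for all $h$, establishing criticality. I do not anticipate a genuine obstacle here: the scale invariance $\mathcal{E}_{g_0}(\lambda g) = \mathcal{E}_{g_0}(g)$ already guarantees that the trace direction is a null direction, so the only content is the trace-free cancellation, which the $\sigma_2$-Einstein assumption trivializes; the sole point requiring care is bookkeeping the powers of $V(g_0)$ and the constancy of $\sigma_2(g_0)$ correctly.
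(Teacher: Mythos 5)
Your proposal is correct and follows essentially the same route as the paper: the paper likewise computes $\mathcal E_{g_0}'(g_0) = -2V(g_0)^{4/n}\int_M\langle \mathring B_{g_0},h\rangle\,dv_{g_0}$ directly (using the fixed volume form, the adjoint identity, and the constancy of $\sigma_2(g_0)$) and then concludes from $\mathring B_{g_0}=0$ for a $\sigma_2$-Einstein metric. Your write-up merely makes explicit the cancellation of the pure-trace terms that the paper leaves as "a direct computation."
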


Before we find the second variation of $\mathcal E_{g_0}$ let us prove the next lemma. 
\begin{lemma}\label{lem006}
For any $\sigma_2$-Einstein  metric $g$ we have
$$\int_M\sigma_2''(g)dv_g= -2\int_M\left(\langle \mathring B_g',\mathring h\rangle-\frac{1}{n}\sigma_2(g)|\mathring h|^2+\left(\frac{n+4}{4n}\Lambda_g(h)+\frac{n-2}{2n^2}\sigma_2(g)(tr_gh)\right)(tr_gh)\right)dv_g.$$
\end{lemma}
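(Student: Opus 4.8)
The plan is to differentiate, along the linear path $g(t)=g+th$ (so that $g'(t)=h$ is constant and $g''(0)=0$), the \emph{global} adjointness identity that underlies \eqref{eq059}: since $B_g=-\tfrac12\Lambda_g^*(1)$ and $\Lambda_g^*$ is the $L^2$-adjoint of $\Lambda_g$ (see \eqref{eq031}), for every $t$ one has
$$\int_M \Lambda_{g(t)}(h)\,dv_{g(t)} = -2\int_M \langle B_{g(t)},h\rangle_{g(t)}\,dv_{g(t)}.$$
The left-hand side encodes $\int_M\sigma_2''(0)\,dv_g$, while differentiating the right-hand side produces the variation $B_g'$ of the tensor $B_g$, which is precisely what should appear on the right of the lemma. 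On the left I would use that $g'(t)=h$ is constant, so $\tfrac{d}{dt}\sigma_2(g(t))=\Lambda_{g(t)}(h)$ and hence $\sigma_2''(0)=\tfrac{d}{dt}\big|_0\Lambda_{g(t)}(h)$; combined with $\tfrac{d}{dt}dv_{g(t)}\big|_0=\tfrac12(tr_gh)\,dv_g$ this gives
$$\frac{d}{dt}\Big|_0\int_M\Lambda_{g(t)}(h)\,dv_{g(t)} = \int_M\sigma_2''(0)\,dv_g + \frac12\int_M\Lambda_g(h)\,tr_gh\,dv_g.$$

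Next I would differentiate the right-hand side, accounting for its three sources of $t$-dependence: the tensor $B_g'$, the variation of the inner product (via $\tfrac{d}{dt}g^{ij}=-(g^{-1}hg^{-1})^{ij}$, which contributes two contractions), and the volume form. Here the $\sigma_2$-Einstein hypothesis enters decisively: by Theorem \ref{teo002} the $\sigma_2$-curvature is constant, and by the computation in the proof of Theorem \ref{theorem-A} one has $\Lambda_g^*(1)=-\tfrac2n\sigma_2(g)g$, so that $B_g=\tfrac1n\sigma_2(g)g$ and $\mathring B_g=0$. This makes the inner-product and volume-form corrections completely explicit, namely $-\tfrac2n\sigma_2(g)|h|^2$ from the two inverse-metric variations and $\tfrac1{2n}\sigma_2(g)(tr_gh)^2$ from the volume form.

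The key algebraic step is to reorganize $\langle B_g',h\rangle$ into the trace-free form of the statement. Writing $B_g'=\mathring B_g'+\tfrac1n\Lambda_g(h)g+\tfrac1n\sigma_2(g)h$ (the variation of $B_g=\mathring B_g+\tfrac1n\sigma_2 g$), I would verify, using $tr_gB_g=\sigma_2(g)$ together with $\tfrac{d}{dt}g^{ij}=-(g^{-1}hg^{-1})^{ij}$, that $tr_g\mathring B_g'=0$; this is what permits the replacement $\langle\mathring B_g',h\rangle=\langle\mathring B_g',\mathring h\rangle$ and is the structural reason the trace-free tensor $\mathring B_g$ appears at all. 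Substituting this, splitting $|h|^2=|\mathring h|^2+\tfrac1n(tr_gh)^2$, and collecting terms—where the coefficient of $\int_M\Lambda_g(h)\,tr_gh\,dv_g$ becomes $-\tfrac{n+4}{2n}=-2\cdot\tfrac{n+4}{4n}$, combining the $-\tfrac2n$ from the right-hand side with the $+\tfrac12$ left over from the left-hand side, and the $(tr_gh)^2$ coefficient collapses to $\tfrac{n-2}{2n^2}\sigma_2$—yields exactly the claimed identity.

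The main obstacle is bookkeeping rather than conceptual: one must track every metric-variation correction (the two contractions from the inner product and the one from the volume form), since a slip in any numerical coefficient would corrupt the exact constants $\tfrac{n+4}{4n}$, $\tfrac{n-2}{2n^2}$, and $\tfrac1n$. The one genuinely structural verification is the $g$-trace-freeness of $\mathring B_g'$. I also note that passing to the linear path (equivalently, the convention $g''(0)=0$) is what removes the $\Lambda_g(h')$ term that is present in the pointwise formula of Proposition \ref{lem004}; this is harmless for the intended application, where the lemma feeds into the Hessian of $\mathcal E_{g_0}$ at a critical point and any $g''(0)$-contribution drops out automatically.
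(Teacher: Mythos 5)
Your argument is correct and is essentially the paper's own proof: differentiating the adjointness identity $\int_M\Lambda_{g(t)}(h)\,dv_{g(t)}=-2\int_M\langle B_{g(t)},h\rangle\,dv_{g(t)}$ along a linear path is exactly the same computation as the paper's Leibniz expansion of $\bigl(\int_M\sigma_2\,dv_g\bigr)''$, and the subsequent steps (using $\mathring B_g=0$, the decomposition $B_g'=\mathring B_g'+\tfrac1n\Lambda_g(h)g+\tfrac1n\sigma_2(g)h$ with $tr_g\mathring B_g'=0$, and the split $|h|^2=|\mathring h|^2+\tfrac1n(tr_gh)^2$) coincide with the paper's, with all coefficients checking out. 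Your explicit remark that the linear-path convention is what suppresses the $\Lambda_g(h')$ term is a point the paper leaves implicit, and it is handled correctly.
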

\begin{proof}
For any metric $g$ we have
\begin{align*}
    \int_M&\sigma_2''(g)  dv_g  = \left(\int_M\sigma_2(g)dv_g\right)''-2\int_M\sigma_2'(g)(dv_g)'-\int_M\sigma_2(g)(dv_g)''= -2\left(\int_M\langle B_g,h\rangle dv_g\right)'\\
    & -\int_M\sigma_2'(g)(dv_g)' = -2\int_M\left(\langle B_g',h\rangle-2\langle B_g,h\circ h\rangle+\frac{1}{2}\langle B_g,h\rangle(tr_gh)+\frac{1}{4}\Lambda_g(h)(tr_gh)\right) dv_g.
\end{align*}
Using that $\mathring B_{g}=0$, for a $\sigma_2$-Einstein metric $g$, and $\mathring B_g'=B_g'-\frac{1}{n}\Lambda_g(h)g-\frac{1}{n}\sigma_2(g)h$, taking the derivative of both sides in $tr_gB_g=\sigma_2(g)$, we find $tr_g\mathring B_g' =0$. Thus, considering that $h=\mathring h+\frac{1}{n}(tr_gh)g$, with $tr_g\mathring h=0$, we  conclude our lemma.

\end{proof}

\begin{proposition}\label{proposition002}
The second variation of $\mathcal E_{g_0}$ at a $\sigma_2$-Einstein metric $g_0$ is given by
\begin{align*}
   V(g_0)^{-\frac{4}{n}}   \mathcal E_{g_0}''(g_0)  = -2\int_M\left\langle (D\mathring B_{g_0})\left(\mathring h\right),\mathring h\right\rangle dv_{g_0}-\frac{n+4}{2n}\int_M\Lambda_{g_0}(\mathring h)(tr_{g_0}h)dv_{g_0}\\
  -\frac{2}{n}\int_M\left(tr_{g_0}\left((D\mathring B_{g_0})^*(\mathring h)\right)\right) (tr_{g_0}h)dv_{g_0}+\frac{n+4}{2n^2}\int_M(tr_{g_0}h-\overline{tr_{g_0}h})\mathcal T\left(tr_{g_0}h-\overline{tr_{g_0}h}\right)dv_{g_0}.
\end{align*}
\end{proposition}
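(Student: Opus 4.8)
The plan is to compute the Hessian directly from the product structure $\mathcal E_{g_0}(g)=V(g)^{4/n}G(g)$, where $G(g):=\int_M\sigma_2(g)\,dv_{g_0}$ and the volume form $dv_{g_0}$ is frozen. Since $g_0$ is $\sigma_2$-Einstein, Theorem \ref{teo002} gives that $s_0:=\sigma_2(g_0)$ is a constant and $\mathring B_{g_0}=0$, so that $B_{g_0}=\frac{s_0}{n}g_0$ and, by \eqref{eq059}, $\Lambda_{g_0}^*(1)=-\frac{2s_0}{n}g_0$. By Lemma \ref{lemcritical}, $g_0$ is a critical point of $\mathcal E_{g_0}$; hence the second variation $\mathcal E_{g_0}''(g_0)$ is a well-defined quadratic form in $h$, independent of the acceleration $g''(0)$. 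Concretely, along a path with $g'(0)=h$ and $g''(0)=h'$ one has $\frac{d^2}{dt^2}\big|_{0}\mathcal E_{g_0}(g(t))=\mathcal E_{g_0}''(g_0)(h,h)+\mathcal E_{g_0}'(g_0)(h')$ and the last term vanishes, so I would take $h'=0$ throughout, removing every acceleration term at no cost.

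Differentiating the product twice and evaluating at $t=0$ gives
\[
V_0^{-4/n}\mathcal E_{g_0}''(g_0)=\Big[\tfrac4n\big(\tfrac4n-1\big)V_0^{-2}(V'(0))^2+\tfrac4n V_0^{-1}V''(0)\Big]s_0V_0+2\cdot\tfrac4n V_0^{-1}V'(0)\,G'(0)+G''(0),
\]
where $V_0:=V(g_0)$. Here $V'(0)=\frac12\int_M tr_{g_0}h\,dv_{g_0}$, and by the adjointness of $\Lambda_{g_0}$ together with $\Lambda_{g_0}^*(1)=-\frac{2s_0}{n}g_0$ one gets $G'(0)=\int_M\Lambda_{g_0}(h)\,dv_{g_0}=-\frac{2s_0}{n}\int_M tr_{g_0}h\,dv_{g_0}$ (as a check, these already force $\mathcal E_{g_0}'(g_0)=0$, recovering Lemma \ref{lemcritical}). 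The term $G''(0)=\int_M\sigma_2''(0)\,dv_{g_0}$ is exactly the quantity supplied by Lemma \ref{lem006}, and $V''(0)=\frac12\int_M(\frac12(tr_{g_0}h)^2-|h|^2)\,dv_{g_0}$ since $h'=0$. Collecting the $(V'(0))^2$ contributions produces a single coefficient $-\frac{4s_0(n+4)}{n^2}V_0^{-1}(V'(0))^2$.

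The remaining work is algebraic bookkeeping under the orthogonal splitting $h=\mathring h+\frac1n(tr_{g_0}h)g_0$, using $|h|^2=|\mathring h|^2+\frac1n(tr_{g_0}h)^2$. I would first check that the undifferentiated curvature terms cancel: the $\frac{2s_0}{n}\int|\mathring h|^2$ coming from Lemma \ref{lem006} is killed by the $|\mathring h|^2$ part of $\frac{4s_0}{n}V''(0)$, and likewise the bare $(tr_{g_0}h)^2$ terms of Lemma \ref{lem006} and of $\frac{4s_0}{n}V''(0)$ cancel. In $\langle\mathring B_{g_0}',\mathring h\rangle=\langle D\mathring B_{g_0}(h),\mathring h\rangle$ I substitute the splitting and pass the pure-trace piece through the $L^2$-adjoint $(D\mathring B_{g_0})^*$, producing $-2\int\langle D\mathring B_{g_0}(\mathring h),\mathring h\rangle$ and $-\frac2n\int tr_{g_0}((D\mathring B_{g_0})^*(\mathring h))(tr_{g_0}h)$. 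For the $\Lambda_{g_0}(h)(tr_{g_0}h)$ term I use Lemma \ref{Lem008}, namely $\Lambda_{g_0}((tr_{g_0}h)g_0)=-\mathcal T_{g_0}(tr_{g_0}h)$, to split off $-\frac{n+4}{2n}\int\Lambda_{g_0}(\mathring h)(tr_{g_0}h)$ and a quadratic remainder $\frac{n+4}{2n^2}\int (tr_{g_0}h)\,\mathcal T_{g_0}(tr_{g_0}h)\,dv_{g_0}$.

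The final and most delicate step is to absorb the leftover $(V'(0))^2$ term into this $\mathcal T_{g_0}$-quadratic form. Writing $u:=tr_{g_0}h$ and $\bar u:=\overline{tr_{g_0}h}=\frac1{V_0}\int_M u\,dv_{g_0}$, and using that $\mathcal T_{g_0}$ is self-adjoint with $\mathcal T_{g_0}(1)=2s_0$ (immediate from \eqref{eq049}), a direct expansion gives
\[
\int_M(u-\bar u)\,\mathcal T_{g_0}(u-\bar u)\,dv_{g_0}=\int_M u\,\mathcal T_{g_0}(u)\,dv_{g_0}-\frac{2s_0}{V_0}\Big(\int_M u\,dv_{g_0}\Big)^2.
\]
Since $-\frac{4s_0(n+4)}{n^2}V_0^{-1}(V'(0))^2=-\frac{(n+4)s_0}{n^2V_0}(\int_M u\,dv_{g_0})^2$, this is precisely the constant needed to complete the square, and the two contributions merge into $\frac{n+4}{2n^2}\int_M(tr_{g_0}h-\overline{tr_{g_0}h})\,\mathcal T_{g_0}(tr_{g_0}h-\overline{tr_{g_0}h})\,dv_{g_0}$, yielding the stated formula. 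The main obstacle is exactly this bookkeeping — tracking which $|\mathring h|^2$ and $(tr_{g_0}h)^2$ terms cancel and verifying that the $(V'(0))^2$ coefficient matches the cross term generated by centering $\mathcal T_{g_0}$ about its mean; the geometric inputs (Lemmas \ref{lem006} and \ref{Lem008} and the critical-point identity) do the conceptual work.
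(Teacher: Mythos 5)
Your proposal is correct and follows essentially the same route as the paper: expand the product $V^{4/n}\cdot\int\sigma_2\,dv_{g_0}$ at the critical point, feed in Lemma \ref{lem006}, split $h=\mathring h+\frac1n(tr_{g_0}h)g_0$ to generate the $D\mathring B_{g_0}$ and adjoint-trace terms, convert the pure-trace part of $\Lambda_{g_0}$ to $-\mathcal T_{g_0}$, and absorb the $(V'(0))^2$ contribution by centering the $\mathcal T_{g_0}$-quadratic form about the mean. Your explicit justification for taking $h'=0$ (criticality of $g_0$) and the coefficient bookkeeping both check out against the paper's computation.
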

\begin{proof}
Since $B_{g_0}=\frac{1}{n}\sigma_2(g_0)g_0$, we obtain
\begin{align*}
\mathcal E_{g_0}''&(g_0)
  =~\displaystyle V(g_0)^{\frac{4}{n}}\int_M\sigma_2''(g)dv_{g_0}-\frac{8}{n^2}\sigma_2(g_0)V(g_0)^{\frac{4}{n}-1}\left(\int_Mtr_{g_0}hdv_{g_0}\right)^2\\
 & +\frac{1}{n}V(g_0)^{\frac{4}{n}}\sigma_2(g_0)\left(\frac{4-n}{n}V(g_0)^{-1}\left(\int_Mtr_{g_0}hdv_{g_0}\right)^2+\int_M\left(\frac{n-2}{n}(tr_{g_0}h)^2-2|\mathring h|^2_{g_0}\right)dv_{g_0}\right).
\end{align*}

By Lemma \ref{lem006}, we have

\begin{align*}
    V&(g_0)^{-\frac{4}{n}}   \mathcal E_{g_0}''(g_0)\\  
  &=  \displaystyle -2\int_M\left(\langle \mathring B_g',\mathring h\rangle-\frac{1}{n}\sigma_2(g_0)|\mathring h|^2+\left(\frac{n+4}{4n}\Lambda_{g_0}(h)+\frac{n-2}{2n^2}\sigma_2(g_0)(tr_{g_0}h)\right)(tr_{g_0}h)\right)dv_{g_0}
\end{align*}

\begin{align*}
 & -\frac{n+4}{n^2}\sigma_2(g_0)V(g_0)^{-1}\left(\int_Mtr_{g_0}hdv_{g_0}\right)^2+\frac{1}{n}\sigma_2(g_0)\int_M\left(\frac{n-2}{n}(tr_{g_0}h)^2-2|\mathring h|^2_{g_0}\right)dv_{g_0}\\
 & = -2\int_M\langle \mathring B_g',\mathring h\rangle dv_{g_0}-\frac{n+4}{2n}\int_M\Lambda_{g_0}(\mathring h)(tr_{g_0}h)dv_{g_0}-\frac{n+4}{n^2}\sigma_2(g_0)V(g_0)^{-1}\left(\int_Mtr_{g_0}hdv_{g_0}\right)^2\\
 & -\frac{n+4}{2n^2}\int_M(tr_{g_0}h)\left(tr_{g_0}(\Lambda_{g_0}^*(tr_{g_0}h))\right)dv_{g_0}.
\end{align*}
Using that $\mathring B_{g_0}'=D\mathring B_{g_0}(h)$ and $h=\mathring h+\frac{1}{n}(tr_{g_0}h)g_0$, with $tr_{g_0}\mathring h=0$, we find
\begin{align*}
    -2\int_M\langle \mathring B_g',\mathring h\rangle dv_{g_0} 
    & = -2\int_M\left\langle (D\mathring B_{g_0})\left(\mathring h\right),\mathring h\right\rangle dv_{g_0} -\frac{2}{n}\int_M\left(tr_{g_0}\left((D\mathring B_{g_0})^*(\mathring h)\right)\right) (tr_{g_0}h)dv_{g_0}.
\end{align*}

Also, by \eqref{eq049} and \eqref{eqn003}, we obtain
\begin{align*}
   & -\frac{1}{2}\int_M(tr_{g_0}h)\left(tr_{g_0}(\Lambda_{g_0}^*(tr_{g_0}h))\right)dv_{g_0}-\sigma_2(g_0)V(g_0)^{-1}\left(\int_Mtr_{g_0}hdv_{g_0}\right)^2\\
  &=  \frac{1}{2}\int_M(tr_{g_0}h)\mathcal T_{g_0}(tr_{g_0}h)dv_{g_0}-\sigma_2(g_0)V(g_0)(\overline{tr_{g_0}h})^2= ~\frac{1}{2}\int_M(tr_{g_0}h-\overline{tr_{g_0}h})\mathcal T\left(tr_{g_0}h-\overline{tr_{g_0}h}\right)dv_{g_0},
\end{align*}
where $\overline{tr_{g_0}h}=V(g_0)^{-1}\displaystyle\int_Mtr_{g_0}hdv_{g_0}.$ 
From this and the previous formulas  we obtain the result.
\end{proof}
\begin{proposition}\label{propo003}
Let $g$ be an Einstein metric. Then for any $\mathring h\in S_{2,g}^{TT}(M)$, it holds
\begin{align*}
    D\mathring B_g(\mathring h) 
    = & \frac{1}{4(n-2)^2}\left(\Delta_E^g+2(n-2)^2\sigma_2(g)\right)(\Delta_E^g\mathring h).
\end{align*}
\end{proposition}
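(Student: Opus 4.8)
The plan is to differentiate the \emph{closed form} of $B_g=-\tfrac12\Lambda_g^*(1)$ provided by Lemma \ref{lem007}, and to exploit how drastically its terms collapse at an Einstein background. Multiplying Lemma \ref{lem007} by $-\tfrac12$ writes $4(n-2)^2 B_g$ as an intrinsic sum whose non-pure-trace terms are $\Delta_E^g\mathring{Ric}_g$, $\tfrac{n-2}{2(n-1)}\nabla^2R_g$ and $\tfrac{(n-2)^2}{2n(n-1)}R_g\mathring{Ric}_g$, while every remaining term is proportional to $g$ (the pieces involving $\Delta_gR_g$, $|\mathring{Ric}_g|^2$ and $\sigma_2(g)$). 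Passing to the trace-free part $\mathring B_g=B_g-\tfrac1n\sigma_2(g)g$ annihilates all the $g$-terms, so I would first record the identity expressing $4(n-2)^2\mathring B_g$ as the trace-free part of $\Delta_E^g\mathring{Ric}_g+\tfrac{n-2}{2(n-1)}\nabla^2R_g+\tfrac{(n-2)^2}{2n(n-1)}R_g\mathring{Ric}_g$, since this is precisely what gets linearized.

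Next I linearize at the Einstein metric $g$ in a direction $\mathring h\in S_{2,g}^{TT}(M)$, using that there $\mathring{Ric}_g=0$ and $R_g$ is constant. When differentiating the product $\Delta_E^{g(t)}\mathring{Ric}_{g(t)}$, the contribution in which the derivative falls on the operator $\Delta_E$ carries a factor $\mathring{Ric}_g=0$ and drops out, leaving $\Delta_E^g(\mathring{Ric}_g)'$; likewise $(R_g\mathring{Ric}_g)'=R_g(\mathring{Ric}_g)'$. For the Hessian term one invokes $R_g'=-\Delta_g tr_gh+\delta^2h-\langle h,Ric_g\rangle$, which vanishes identically for a transverse-traceless $h$ on an Einstein manifold, so $(\nabla^2R_g)'$ and its trace-free part vanish. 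Hence the whole variation reduces to
\[
4(n-2)^2\,D\mathring B_g(\mathring h)=\Delta_E^g(\mathring{Ric}_g)'+\frac{(n-2)^2}{2n(n-1)}R_g\,(\mathring{Ric}_g)'.
\]

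The heart of the argument is then the identity $(\mathring{Ric}_g)'=\tfrac12\Delta_E^g\mathring h$ at an Einstein metric for TT $\mathring h$. I would read it off the variation formula $Ric_g'=-\tfrac12(\Delta_gh+\nabla^2 tr_gh+2\delta^*\delta h+2\mathring R(h)-Ric_g\circ h-h\circ Ric_g)$: the terms $\nabla^2 tr_gh$ and $\delta^*\delta h$ vanish by the TT condition, $Ric_g\circ h+h\circ Ric_g=\tfrac{2R_g}{n}h$ by Einstein, and $\Delta_gh+2\mathring R(h)=-\Delta_E^gh$ by the definition \eqref{eq004} (with $\Delta_g=tr_g\nabla^2=-\nabla^*\nabla$); combining these yields $Ric_g'=\tfrac12\Delta_E^gh+\tfrac{R_g}{n}h$, and subtracting $\tfrac{R_g}{n}h$ (legitimate since $R_g'=0$) gives $(\mathring{Ric}_g)'=\tfrac12\Delta_E^g\mathring h$. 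Substituting this into the displayed reduction and rewriting the coefficient of the lower-order term through the Einstein relation $\sigma_2(g)=\tfrac{1}{8n(n-1)}R_g^2$ from \eqref{eq007} produces the asserted factorization $\tfrac{1}{4(n-2)^2}\bigl(\Delta_E^g+2(n-2)^2\sigma_2(g)\bigr)(\Delta_E^g\mathring h)$. A preliminary point I would verify is that $\Delta_E^g$ preserves $S_{2,g}^{TT}(M)$ on an Einstein manifold, so that $\Delta_E^g\mathring h$ is again TT and the iterate $\Delta_E^g\Delta_E^g\mathring h$ is meaningful; this follows from $tr_g\Delta_E^g\mathring h=-2\langle Ric_g,\mathring h\rangle=0$ and the commutation of $\Delta_E^g$ with $\delta_g$ in the Einstein case. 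I expect the main obstacle to be the careful bookkeeping of which terms survive the linearization; once the Einstein and TT reductions are in place, the remaining manipulations are routine.
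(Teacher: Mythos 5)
Your proposal is correct and matches the paper's proof essentially step for step: both differentiate the expression for $\mathring B_g$ coming from Lemma \ref{lem007} at the Einstein background, discard the terms that die because $\mathring{Ric}_g\equiv 0$, $R_g$ is constant, and $R_g'=0$ in a transverse-traceless direction, and then conclude from $D\mathring{Ric}_g(\mathring h)=\tfrac12\Delta_E^g\mathring h$ together with the Einstein identity \eqref{eq007}. The only difference is cosmetic: the paper cites that last identity from Lemma 3.2 of \cite{lin2021deformations}, whereas you rederive it from the first variation formula for the Ricci tensor (and you additionally check that $\Delta_E^g$ preserves $S_{2,g}^{TT}(M)$, a point the paper leaves implicit).
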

\begin{proof}
Since $g$ is Einstein, then $\mathring{Ric}_g\equiv 0$ and $R_g$ is constant. Also, $\mathring h\in S_{2,g}^{TT}(M)$ implies that 
$R_g'=-\Delta_g(tr_g\mathring g)+\delta^2\mathring h-\langle Ric_g,\mathring h\rangle=0$
and $\sigma_2'(g)=0$ (see \eqref{eq034}). By Lemma \ref{lem007} we obtain
\begin{align*}
    4(n-2)^2\mathring B_g =  
     \Delta_E^g\mathring{Ric}_g-\frac{n-2}{2n(n-1)}(\Delta_gR_g)g+\frac{n-2}{2(n-1)}\nabla^2R_g+\frac{(n-2)^2}{2n(n-1)}R_g\mathring{Ric}_g +\frac{4}{n}|\mathring{Ric}_g|^2g.
\end{align*}
Thus the result follows by taking derivatives and using Lemma 3.2 in \cite{lin2021deformations}, which gives us 
$D\mathring{Ric}_g(\mathring h)=\frac{1}{2}\Delta_E^g\mathring h.$
\end{proof}

We notice that the definition \eqref{eq004} of the Einstein operator differs from \cite[Definition 1.6]{lin2021deformations} and \cite[Definition 1.6]{lin2016deformations} by a sign.
Using \eqref{eq007} we have the following.

\begin{coring}\label{corollary001}
Let $g$ be an Einstein metric. Then $D\mathring B_g$ is a self-adjoint operator on $S_{2,g}^{TT}(M)$. In addition, if $g$ is a strictly stable Einstein metric, then $D\mathring B_g$ is positive.
\end{coring}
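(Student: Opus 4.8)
The plan is to prove Corollary \ref{corollary001} as an immediate consequence of Proposition \ref{propo003}, so the entire argument rests on unpacking the operator identity established there. First I would recall that for an Einstein metric $g$, Proposition \ref{propo003} gives, for every $\mathring h\in S_{2,g}^{TT}(M)$,
\begin{equation*}
D\mathring B_g(\mathring h)=\frac{1}{4(n-2)^2}\left(\Delta_E^g+2(n-2)^2\sigma_2(g)\right)(\Delta_E^g\mathring h).
\end{equation*}
The key structural observation is that this expression is a polynomial in the single operator $\Delta_E^g$ (plus a scalar multiple of the identity, using that $\sigma_2(g)$ is constant for an Einstein metric, by \eqref{eq007}). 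Concretely, writing $L:=\Delta_E^g$ and $c:=2(n-2)^2\sigma_2(g)$, one has $4(n-2)^2\,D\mathring B_g=L^2+cL$ on $S_{2,g}^{TT}(M)$.

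To verify self-adjointness I would first check that $\Delta_E^g$ preserves the space $S_{2,g}^{TT}(M)$ of transverse-traceless tensors, so that composition of the operator with itself is well-defined on this space; this is a standard fact for the Einstein operator on an Einstein background, and it is implicit in the setup of \eqref{eq004}. Granting this, since $\Delta_E^g=\nabla^*\nabla-2\mathring R$ is manifestly self-adjoint in the $L^2$-inner product (the rough Laplacian $\nabla^*\nabla$ is self-adjoint and $\mathring R$ is a pointwise symmetric endomorphism of $S_2(M)$), any polynomial in $\Delta_E^g$ with real coefficients is again self-adjoint. Because $\sigma_2(g)$ is a real constant, $L^2+cL$ is such a polynomial, and therefore $D\mathring B_g$ is self-adjoint on $S_{2,g}^{TT}(M)$.

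For the positivity claim I would argue by diagonalizing $\Delta_E^g$ on $S_{2,g}^{TT}(M)$, which is legitimate since it is a self-adjoint elliptic operator on a closed manifold and hence has a discrete real spectrum with an $L^2$-orthonormal basis of eigentensors. Strict stability of $g$ means precisely that $\left\langle\Delta_E^g h,h\right\rangle\geq\lambda\|h\|^2$ for some $\lambda>0$ and all $h\in S_{2,g}^{TT}(M)$; equivalently every eigenvalue $\mu$ of $\Delta_E^g$ on this space satisfies $\mu\geq\lambda>0$. On an eigentensor with eigenvalue $\mu$, the operator $L^2+cL$ acts by the scalar $\mu^2+c\mu=\mu(\mu+c)$. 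Here $c=2(n-2)^2\sigma_2(g)$, and since the hypothesis of strict stability in Theorem \ref{theorem-B} is paired with $\lambda>0$ forcing positive scalar curvature, $\sigma_2(g)\geq 0$ by \eqref{eq007}; thus $c\geq 0$, so $\mu(\mu+c)\geq\mu^2>0$. Summing over the spectral decomposition gives $\left\langle D\mathring B_g(\mathring h),\mathring h\right\rangle>0$ for every nonzero $\mathring h\in S_{2,g}^{TT}(M)$, which is the desired positivity.

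The main obstacle, and the one point I would be careful to justify rather than assert, is the invariance of $S_{2,g}^{TT}(M)$ under $\Delta_E^g$, since without it the factorization $L^2+cL$ does not literally make sense on the transverse-traceless space and the diagonalization argument breaks down; the positivity of the scalar term $c$ (hence the sign of $\sigma_2(g)$) is the secondary subtlety, but it is handled cleanly by \eqref{eq007} together with the strict stability assumption. Everything else is the formal manipulation of a self-adjoint operator and its functional calculus, which requires no further computation.
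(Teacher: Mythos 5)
Your proof is correct and follows essentially the same route as the paper, which derives the corollary directly from Proposition \ref{propo003} together with \eqref{eq007}: the operator is a polynomial $L^2+cL$ in the self-adjoint Einstein operator $L=\Delta_E^g$ with $c=2(n-2)^2\sigma_2(g)$, and strict stability forces positivity. One small remark: you do not need strict stability or positive scalar curvature to get $c\geq 0$, since \eqref{eq007} gives $\sigma_2(g)=\frac{1}{8n(n-1)}R_g^2\geq 0$ for \emph{any} Einstein metric, being a square; your detour through ``$\lambda>0$ forcing positive scalar curvature'' is harmless but unnecessary, and the corollary's self-adjointness and nonnegativity of $c$ hold for Einstein metrics of either sign of scalar curvature.
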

\begin{proposition}\label{proposition001}
Let $g$ be a  Riemannian metric. For
$h\in S_{2,g}^{TT}(M)\oplus(C^\infty(M)\cdot g)$ we have
$$\Lambda_g(h)= \displaystyle\frac{1}{2(n-2)^2}div\left(\langle \mathring{Ric}_g,\Delta_g h \rangle-\langle h,\Delta_g \mathring{Ric}_g \rangle+\frac{n-2}{2(n-1)}\mathring h(\nabla R_g,\cdot)\right)-2\left\langle \mathring B_g,\mathring h\right\rangle-\frac{1}{n}\mathcal T(tr_gh),$$
where $\mathring h$ is the trace free part of $h$.
\end{proposition}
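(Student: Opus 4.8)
The plan is to exploit the orthogonal splitting furnished by the hypothesis $h\in S_{2,g}^{TT}(M)\oplus(C^\infty(M)\cdot g)$, writing $h=\mathring h+\frac1n(tr_gh)g$ with $\mathring h\in S_{2,g}^{TT}(M)$ transverse-traceless, and treating the two summands separately by linearity of $\Lambda_g$. The pure-trace summand is disposed of at once by Lemma \ref{Lem008}, which gives $\Lambda_g(\frac1n(tr_gh)g)=-\frac1n\mathcal T_g(tr_gh)$; this already produces the last term of the claimed identity. Everything then reduces to establishing, for transverse-traceless $\mathring h$, the identity $\Lambda_g(\mathring h)=\frac1{2(n-2)^2}\,div(\cdots)-2\langle\mathring B_g,\mathring h\rangle$.

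First I would specialize the linearization \eqref{eq001} to $\mathring h$. Since $tr_g\mathring h=0$ and $\delta\mathring h=0$, the terms $\nabla^2tr_g\mathring h$, $\delta^*\delta\mathring h$, $\Delta_gtr_g\mathring h$ and $\delta^2\mathring h$ all vanish, leaving
\[2(n-2)^2\Lambda_g(\mathring h)=-\langle Ric_g,\Delta_E^g\mathring h\rangle-\frac{n}{2(n-1)}R_g\langle Ric_g,\mathring h\rangle.\]
Next I would replace the full Ricci tensor by its trace-free part: writing $Ric_g=\mathring{Ric}_g+\frac{R_g}{n}g$ and using $\langle g,\mathring h\rangle=tr_g\mathring h=0$ in the second term, together with $tr_g(\Delta_E^g\mathring h)=-2\langle\mathring{Ric}_g,\mathring h\rangle$ in the first (which follows from $\Delta_E^g=\nabla^*\nabla-2\mathring R$, from $tr_g(\nabla^*\nabla\mathring h)=0$, and from $tr_g\mathring R(\mathring h)=\langle Ric_g,\mathring h\rangle$), one arrives at
\[2(n-2)^2\Lambda_g(\mathring h)=-\langle\mathring{Ric}_g,\Delta_E^g\mathring h\rangle-\frac{(n-2)^2}{2n(n-1)}R_g\langle\mathring{Ric}_g,\mathring h\rangle,\]
the coefficient coming from $\frac2n-\frac{n}{2(n-1)}=-\frac{(n-2)^2}{2n(n-1)}$.

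The heart of the argument is to move the Einstein operator off $\mathring h$ and onto $\mathring{Ric}_g$. Because $\mathring R$ is self-adjoint on symmetric $2$-tensors, the difference $\langle\mathring{Ric}_g,\Delta_E^g\mathring h\rangle-\langle\Delta_E^g\mathring{Ric}_g,\mathring h\rangle$ equals $\langle\mathring{Ric}_g,\nabla^*\nabla\mathring h\rangle-\langle\nabla^*\nabla\mathring{Ric}_g,\mathring h\rangle$, and the latter is an exact divergence by the Wronskian identity for the rough Laplacian; this yields the $\langle\mathring{Ric}_g,\cdot\rangle-\langle\cdot,\mathring{Ric}_g\rangle$ part of $div(\cdots)$. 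To identify $\langle\Delta_E^g\mathring{Ric}_g,\mathring h\rangle$ I would invoke Lemma \ref{lem007} together with \eqref{eq059}, which express $\Delta_E^g\mathring{Ric}_g$ as $4(n-2)^2\mathring B_g$, plus $-\frac{n-2}{2(n-1)}\nabla^2R_g$, plus $-\frac{(n-2)^2}{2n(n-1)}R_g\mathring{Ric}_g$, plus pure-trace tensors. Pairing against the trace-free $\mathring h$ annihilates the pure-trace tensors, the $R_g\mathring{Ric}_g$ term cancels exactly with the curvature term already present from the previous step, and the Hessian term becomes a divergence via $\langle\nabla^2R_g,\mathring h\rangle=div(\mathring h(\nabla R_g,\cdot))$, which again uses $\delta\mathring h=0$. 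Collecting these divergences and dividing by $2(n-2)^2$ produces the stated formula.

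I expect the main obstacle to be the bookkeeping: keeping the sign conventions for $\Delta_E^g$, $\delta=-div$ and $\nabla^*\nabla$ consistent while assembling the several flux one-forms into the single one-form sitting inside the divergence. The most delicate single point is the exact cancellation of the two $R_g\langle\mathring{Ric}_g,\mathring h\rangle$ contributions, which is what removes all undifferentiated curvature from the identity; and it is precisely the transverse-traceless hypothesis on $\mathring h$ that forces every auxiliary term either to vanish or to reorganize into an exact divergence.
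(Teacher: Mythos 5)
Your proposal is correct and follows essentially the same route as the paper: split $h$ into its pure-trace part (handled by Lemma \ref{Lem008}) and its transverse-traceless part, specialize \eqref{eq001} to $\mathring h$, and combine with Lemma \ref{lem007} and \eqref{eq059} so that the two $R_g\langle\mathring{Ric}_g,\mathring h\rangle$ contributions cancel and the remaining commutator and Hessian terms assemble into the divergence. Your bookkeeping (the coefficient $\tfrac{2}{n}-\tfrac{n}{2(n-1)}=-\tfrac{(n-2)^2}{2n(n-1)}$, the identity $tr_g(\Delta_E^g\mathring h)=-2\langle\mathring{Ric}_g,\mathring h\rangle$, and the use of $\delta\mathring h=0$ for the flux terms) is accurate and in fact more explicit than the paper's own proof.
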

\begin{proof}
By Lemma \ref{Lem008} we get
$\Lambda_g\left(\frac{1}{n}(tr_gh)g\right)=-\frac{1}{n}\mathcal T(tr_gh).$
Using \eqref{eq001} and Lemma \ref{lem007} we obtain

$$\Lambda_g(\mathring h) +2\left\langle \mathring B_g,\mathring h\right\rangle=  \displaystyle-\frac{1}{2(n-2)^2}\left\langle \mathring{Ric}_g,\Delta_E^g\mathring{h}+\frac{(n-2)^2}{2n(n-1)}R_g\mathring h\right\rangle$$

\begin{align*}
    & +\frac{1}{2(n-2)^2}\left\langle \Delta_E^g\mathring{Ric}_g+\frac{n-2}{2(n-1)}\nabla^2R_g+\frac{(n-2)^2}{2n(n-1)}R_g\mathring{Ric}_g,\mathring h\right\rangle\\
    = &~ \displaystyle\frac{1}{2(n-2)^2}div\left(\langle \mathring{Ric}_g,\Delta_g h \rangle-\langle h,\Delta_g \mathring{Ric}_g \rangle+\frac{n-2}{2(n-1)}\mathring h(\nabla R_g,\cdot)\right).
\end{align*}
\end{proof}

\begin{coring}\label{cor004}
Let $(M,g_0)$ be an Einstein manifold. For $h\in S_{2,g_0}^{TT}(M)\oplus (C^\infty(M)\cdot g_0)$ we have
\begin{align*}
    V(g_0)^{-\frac{4}{n}}  \mathcal E_{g_0}''(g_0)=    \frac{n+4}{2n^2}\int_M(tr_{g_0}h-\overline{tr_{g_0}h})\mathcal T_{g_0}\left(tr_{g_0}h-\overline{tr_{g_0}h}\right)dv_{g_0}-2\int_M\left\langle (D\mathring B_{g_0})\left(\mathring h\right),\mathring h\right\rangle dv_{g_0},
\end{align*}
where $\mathring h$ is the trace free part of $h$.
\end{coring}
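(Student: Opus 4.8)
The plan is to derive Corollary \ref{cor004} directly from Proposition \ref{proposition002} by specializing to the decomposition $h = \mathring h + \frac{1}{n}(tr_{g_0}h)g_0$ with $\mathring h \in S_{2,g_0}^{TT}(M)$ and using that $g_0$ is Einstein (hence in particular $\sigma_2$-Einstein). The starting point is the four-term second-variation formula already established in Proposition \ref{proposition002}:
\begin{align*}
V(g_0)^{-\frac{4}{n}}\mathcal E_{g_0}''(g_0) = & -2\int_M\left\langle (D\mathring B_{g_0})(\mathring h),\mathring h\right\rangle dv_{g_0}-\frac{n+4}{2n}\int_M\Lambda_{g_0}(\mathring h)(tr_{g_0}h)dv_{g_0}\\
& -\frac{2}{n}\int_M\left(tr_{g_0}\left((D\mathring B_{g_0})^*(\mathring h)\right)\right)(tr_{g_0}h)dv_{g_0}\\
& +\frac{n+4}{2n^2}\int_M(tr_{g_0}h-\overline{tr_{g_0}h})\mathcal T_{g_0}\left(tr_{g_0}h-\overline{tr_{g_0}h}\right)dv_{g_0}.
\end{align*}
Comparing this with the statement of Corollary \ref{cor004}, I see that the first and last terms already match what is claimed, so the whole content of the corollary is to show that the two middle integrals vanish identically when $\mathring h \in S_{2,g_0}^{TT}(M)$ and $g_0$ is Einstein.

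First I would handle the second integral, involving $\Lambda_{g_0}(\mathring h)$. Since $g_0$ is Einstein, $\mathring{Ric}_{g_0}\equiv 0$ and $R_{g_0}$ is constant, so every term inside the divergence in Proposition \ref{proposition001} vanishes: $\langle\mathring{Ric}_{g_0},\Delta_{g_0}h\rangle=0$, $\langle h,\Delta_{g_0}\mathring{Ric}_{g_0}\rangle=0$, and $\nabla R_{g_0}=0$. Moreover $\mathring B_{g_0}=0$ for an Einstein (a fortiori $\sigma_2$-Einstein) metric, and $tr_{g_0}\mathring h = 0$ makes the $\mathcal T(tr_g \mathring h)$ term absent. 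Applying Proposition \ref{proposition001} to $\mathring h$ (which lies in $S_{2,g_0}^{TT}(M)$) therefore yields $\Lambda_{g_0}(\mathring h)=0$ pointwise, so the second integral drops out entirely.

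Next I would treat the third integral. By Proposition \ref{propo003}, for $\mathring h \in S_{2,g_0}^{TT}(M)$ we have $D\mathring B_{g_0}(\mathring h)=\frac{1}{4(n-2)^2}\left(\Delta_E^{g_0}+2(n-2)^2\sigma_2(g_0)\right)(\Delta_E^{g_0}\mathring h)$, and the Einstein operator $\Delta_E^{g_0}$ preserves the TT condition (it maps $S_{2,g_0}^{TT}(M)$ to itself since $g_0$ is Einstein), so $D\mathring B_{g_0}(\mathring h)$ is again transverse-traceless, in particular trace-free. By Corollary \ref{corollary001}, $D\mathring B_{g_0}$ is self-adjoint on $S_{2,g_0}^{TT}(M)$, so its $L^2$-adjoint $(D\mathring B_{g_0})^*$ agrees with $D\mathring B_{g_0}$ on this space and likewise produces a trace-free output; hence $tr_{g_0}\left((D\mathring B_{g_0})^*(\mathring h)\right)=0$ and the third integral vanishes as well. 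The only subtlety I anticipate is making sure the trace-free output is argued cleanly rather than by a heavy recomputation: the cleanest route is to invoke Proposition \ref{propo003} together with the fact that $\Delta_E^{g_0}$ preserves $S_{2,g_0}^{TT}(M)$, so that the trace of the resulting tensor is manifestly zero. With both middle integrals eliminated, substituting back into the Proposition \ref{proposition002} formula gives exactly the two surviving terms stated in Corollary \ref{cor004}.
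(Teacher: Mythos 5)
Your proposal is correct and follows the same overall structure as the paper's proof: both start from Proposition \ref{proposition002} and reduce the corollary to showing that the two middle integrals vanish, and both kill the $\Lambda_{g_0}(\mathring h)$ term by applying Proposition \ref{proposition001} with $\mathring{Ric}_{g_0}=0$, $\nabla R_{g_0}=0$, $\mathring B_{g_0}=0$ and $tr_{g_0}\mathring h=0$. The one place you diverge is the trace term: the paper simply differentiates the identity $tr_g\mathring B_g=0$ with respect to $g$ to get $tr_{g_0}\bigl(D\mathring B_{g_0}(h)\bigr)=\langle \mathring B_{g_0},h\rangle=0$, an identity valid for \emph{arbitrary} $h$ and requiring no structural information about $D\mathring B_{g_0}$; you instead invoke the explicit formula of Proposition \ref{propo003} together with the (true but unstated in the paper) fact that $\Delta_E^{g_0}$ preserves $S_{2,g_0}^{TT}(M)$ when $g_0$ is Einstein. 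Your route is valid, but the paper's is slightly more economical and more robust, since it sidesteps both the TT-preservation lemma and the explicit formula; both arguments share the same mild looseness in identifying $tr_{g_0}\bigl((D\mathring B_{g_0})^*(\mathring h)\bigr)$ with $tr_{g_0}\bigl(D\mathring B_{g_0}(\mathring h)\bigr)$ via the self-adjointness of Corollary \ref{corollary001}, so you are not worse off than the paper on that point.
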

\begin{proof} By Corollary \ref{corollary001}, $D\mathring B_{g_0}$ is a self-adjoint operator on $S_{2,g_0}^{TT}(M)$. Since $\mathring B_{g_0}=0$ when $g_0$ is Einstein, and $tr_{g}\mathring B_{g}=0$, for any metric $g$, then differentiating $tr_{g}\mathring B_{g}=0$ with respect to $g$ we get
$tr_{g_0}\left(D\mathring B_{g_0}(h)\right)=\langle \mathring B_{g_0},h \rangle.$
This implies that
$tr_{g_0}\left((D\mathring B_{g_0})^*(\mathring h)\right)=tr_{g_0}\left(D\mathring B_{g_0}(\mathring h)\right)=\langle \mathring B_{g_0},\mathring h \rangle=0.$
Also, by Proposition \ref{proposition001}, we obtain that $\Lambda_{g_0}(\mathring h)=0$. The result follows by Proposition \ref{proposition002}.
\end{proof}

\begin{proposition}\label{propo004}
Let $(M,g_0)$ be  a strictly stable Einstein manifold with $Ric_{g_0}=(n-1)\lambda g_0$, where $\lambda\geq 0$ is a constant. Then $g_0$ is a critical point of $\mathcal E_{g_0}$ and $D^2\mathcal E_{g_0}(h,h)\leq 0$ for any $h\in S_{2,g_0}^{TT}(M)\oplus(C^\infty(M)\cdot g_0)$. Moreover, the equality holds if and only if 
\begin{enumerate}
    \item[(a)] $h\in C^\infty(M)\cdot g_0$, for $\lambda =0$.
    \item[(b)] $h\in(\mathbb R\oplus E_{\lambda})g_0$, for $\lambda>0$ and $(M,g_0)$ is isometric to the round sphere with radius $\frac{1}{\sqrt{\lambda}}$.
    \item[(c)] $h\in\mathbb Rg_0$, for $\lambda>0$ and $(M,g_0)$ is not isometric to the round sphere, up to rescaling.
\end{enumerate}
Here $E_\lambda:=\{u\in C^\infty(\mathbb S^n(\frac{1}{\sqrt{\lambda}})):\Delta_{\mathbb S^n(\frac{1}{\sqrt{\lambda}})} u+n\lambda u=0\}$.
\end{proposition}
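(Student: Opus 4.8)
The plan is to first settle criticality and then reduce the second variation to the clean expression in Corollary \ref{cor004}, after which the two terms appearing there can be treated independently. For criticality, note that an Einstein metric has $\mathring{Ric}_{g_0}=0$ and $R_{g_0}$ constant, so Lemma \ref{lem007} collapses to $\Lambda_{g_0}^*(1)=-\frac{2}{n}\sigma_2(g_0)g_0$; in particular $g_0$ is $\sigma_2$-Einstein and Lemma \ref{lemcritical} yields that $g_0$ is a critical point of $\mathcal E_{g_0}$. Writing $h=\mathring h+\frac1n(tr_{g_0}h)g_0$ with $\mathring h\in S_{2,g_0}^{TT}(M)$ for $h\in S_{2,g_0}^{TT}(M)\oplus(C^\infty(M)\cdot g_0)$, Corollary \ref{cor004} expresses $V(g_0)^{-4/n}\mathcal E_{g_0}''(g_0)$ as the sum of a trace term $\frac{n+4}{2n^2}\int_M u\,\mathcal T_{g_0}(u)\,dv_{g_0}$, where $u:=tr_{g_0}h-\overline{tr_{g_0}h}$ has zero mean, and a transverse-traceless term $-2\int_M\langle (D\mathring B_{g_0})(\mathring h),\mathring h\rangle\,dv_{g_0}$. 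It therefore suffices to show each term is nonpositive and to analyze their vanishing.

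For the TT term I would invoke Proposition \ref{propo003}, which gives $\langle (D\mathring B_{g_0})(\mathring h),\mathring h\rangle_{L^2}=\frac{1}{4(n-2)^2}\big(\|\Delta_E^{g_0}\mathring h\|^2+2(n-2)^2\sigma_2(g_0)\langle\Delta_E^{g_0}\mathring h,\mathring h\rangle\big)$. Since $g_0$ is strictly stable, $\Delta_E^{g_0}\geq\lambda_0>0$ on $S_{2,g_0}^{TT}(M)$, so $\|\Delta_E^{g_0}\mathring h\|^2\geq\lambda_0^2\|\mathring h\|^2$, while $\sigma_2(g_0)\geq 0$ by \eqref{eq007}; hence $D\mathring B_{g_0}$ is positive \emph{definite} (Corollary \ref{corollary001}), and the TT term is $\leq 0$ with equality if and only if $\mathring h=0$.

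For the trace term, using the Einstein expressions $\mathcal T_{g_0}=\frac{1}{4n}R_{g_0}\Delta_{g_0}+\frac{1}{4n(n-1)}R_{g_0}^2$ and $R_{g_0}=n(n-1)\lambda$ from \eqref{eq007} and integrating by parts gives $\int_M u\,\mathcal T_{g_0}(u)\,dv_{g_0}=-\frac{R_{g_0}}{4n}\big(\int_M|\nabla u|^2-n\lambda\int_M u^2\big)$. If $\lambda=0$ this vanishes identically, so equality in $D^2\mathcal E_{g_0}\leq 0$ forces only $\mathring h=0$, i.e. $h\in C^\infty(M)\cdot g_0$; this is case (a). If $\lambda>0$, the key input is the Lichnerowicz eigenvalue estimate: since $Ric_{g_0}=(n-1)\lambda g_0$, the first nonzero eigenvalue of $-\Delta_{g_0}$ satisfies $\mu_1\geq n\lambda$, whence $\int_M|\nabla u|^2\geq n\lambda\int_M u^2$ for mean-zero $u$ and the trace term is $\leq 0$. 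Combining the two terms yields $D^2\mathcal E_{g_0}(h,h)\leq 0$.

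The rigidity is where the argument becomes delicate, and the main input is Obata's theorem. When $(M,g_0)$ is not isometric (up to scaling) to the round sphere, the Lichnerowicz estimate is strict, $\mu_1>n\lambda$, so the trace term is strictly negative unless $u=0$; together with $\mathring h=0$ this forces $h\in\mathbb R g_0$, giving case (c). When $(M,g_0)$ is the round sphere of radius $1/\sqrt\lambda$, equality $\mu_1=n\lambda$ is attained and the trace term vanishes precisely when $u$ lies in the first eigenspace $E_\lambda$ (the solutions of $\Delta_{g_0}u+n\lambda u=0$); together with $\mathring h=0$ and $tr_{g_0}h=\overline{tr_{g_0}h}+u$ this yields $h\in(\mathbb R\oplus E_\lambda)g_0$, which is case (b). The chief obstacle is thus not the inequality itself but the sharp equality analysis: one must correctly track the mean-zero reduction of $tr_{g_0}h$, match the threshold eigenvalue $n\lambda$ with the coefficient $R_{g_0}/(n-1)$, and invoke Obata rigidity to identify the sphere together with its first eigenfunctions.
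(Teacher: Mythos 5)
Your proof is correct and follows essentially the same route as the paper: criticality via Lemma \ref{lemcritical}, the decomposition of $D^2\mathcal E_{g_0}$ from Corollary \ref{cor004}, positivity of $D\mathring B_{g_0}$ on TT tensors from Proposition \ref{propo003} together with strict stability and $\sigma_2(g_0)\geq 0$, and the Lichnerowicz--Obata theorem applied to $\mathcal T_{g_0}=\frac{n-1}{4}\lambda(\Delta_{g_0}+n\lambda)$ for the trace part and the equality cases. You merely spell out the equality analysis in more detail than the paper's terse final paragraph, but the ingredients and their roles are identical.
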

\begin{proof}
The metric $g_0$ is a critical point of $\mathcal E_{g_0}$ by Lemma \ref{lemcritical}, since an Einstein metric is a $\sigma_2$-Einstein metric. By \eqref{eq007} we have $\sigma_2(g)\geq 0$ and $\mathcal T_{g_0}=\frac{n-1}{4}\lambda(\Delta_{g_0}+n\lambda)$. By the Lichnerowicz-Obata's Theorem (see \cite[Chapter 3]{MR1333601}, for instance) we get that the first eigenvalue of $\mathcal T_{g_0}$ is greater than equal to $n\lambda$. By Proposition \ref{propo003} and Corollary \ref{cor004} we obtain the first part of the result.

Since $g_0$ is stricly stable, by Corollary \ref{cor004} the equality $D^2\mathcal E_{g_0}(h,h)= 0$ implies that $h=fg$, for some $f\in C^\infty(M)$, and
$\lambda\int_M(f-\overline f)(\Delta_{g_0}+n\lambda)(f-\overline f)dv_{g_0}=0.$
This and the Lichnerowicz-Obata's Theorem implies the result.
\end{proof}

\subsection{Volume comparison  with respect to \texorpdfstring{$\sigma_2$}{Lg}-curvature}

Now, with the variational formulae obtained in the previous section we will prove the Theorem \ref{theorem-B}. The proof is motivated by the results related to the volume comparison to the scalar curvature and $Q$-curvature contained in \cite{yuan2016volume} and \cite{lin2021deformations}, respectively. In this way, we will not  provide all details of the proof, which can be found in these references.

We remark that it is well known, see for example \cite[Lemma 4.57]{besse} or \cite{MR3524218}, that  if $(M^{n},  g_0)$ is a closed Einstein manifold, but not the standard sphere, then  we have the direct sum decomposition
$$S_2(M) = \text{Im}\ \delta^* \oplus (C^{\infty}(M)\cdot g_0)  \oplus S_{2,g_0}^{_{TT}}(M).$$
For the standard sphere $\mathbb S^n(\frac{1}{\sqrt{\lambda}})$ of radius $\frac{1}{\sqrt{\lambda}}$, the same result is true if the factor $C^{\infty}(M)$ is replaced by the $L^2$-orthogonal space to the first order spherical harmonics, i.e., by the space $E_\lambda^\perp$, where $E_\lambda:=\{u\in C^\infty(\mathbb S^n(\frac{1}{\sqrt{\lambda}})):\Delta_{\mathbb S^n(\frac{1}{\sqrt{\lambda}})} u+n\lambda u=0\}$.

A {\it local slice} $\mathcal S_{g_0}$ is a set of equivalence classes of metrics near $g_0$ modulo diffeomorphisms. For any closed Einstein manifold $(M,g_0)$, there exists a local slice $\mathcal S_{g_0}$ through $g_0$ in the space of all Riemannian metrics $\mathcal M$. This means that for a fixed real number $p>n$, there exists $\varepsilon>0$ such that  for any metric $g\in\mathcal M$ with $\|g-g_0\|_{W^{2,p}(M,g_0)}<\varepsilon$, there exists a diffeomorphism $\varphi$ with $\varphi^*g\in\mathcal S_{g_0}$, see \cite[Theorem 5.6]{yuan2016volume} for details.

The next result is fundamental to the proof of the volume comparison result, which is a slight modification of   \cite[Proposition 5.8]{lin2016deformations} and \cite[Proposition 5.7]{lin2021deformations}. We prove it by completeness.

\begin{proposition}\label{proposb}
Let $(M^{n},g_0)$ be a strictly stable  Einstein manifold  satisfying $Ric_{g_0} = (n-1)\lambda g_0,$ with $\lambda> 0$. Then there exists a local slice $\mathcal S_{g_0}$ through $g_0$ and a neighborhood $U_{g_0}$ of $ g_0$ in  $\mathcal{S}_{ g_0}$, such that any metric $g \in U_{g_0}$ satisfying
	$
	\mathcal{E}_{g_0}|_{\mathcal{S}_{ g_0}} (g) \geq \mathcal{E}_{g_0}|_{\mathcal{S}_{ g_0}}( g_0)
	$
	implies that $g = c^2  g_0$ for some constant $c > 0$.
\end{proposition}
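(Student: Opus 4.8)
The plan is to localize the problem using the second variation of $\mathcal{E}_{g_0}$ together with a slice argument, exactly in the spirit of Yuan's scalar-curvature comparison. Since $(M,g_0)$ is strictly stable Einstein with $\lambda>0$, Proposition \ref{propo004} tells us that $g_0$ is a critical point of $\mathcal{E}_{g_0}$ and that the second variation $D^2\mathcal{E}_{g_0}(h,h)\le 0$ for all $h\in S_{2,g_0}^{TT}(M)\oplus(C^\infty(M)\cdot g_0)$, with equality characterized precisely in cases (a)--(c). The key point is that on the slice $\mathcal{S}_{g_0}$ we may discard the $\mathrm{Im}\,\delta^*$ (gauge) directions, so tangent vectors to the slice lie in $(C^\infty(M)\cdot g_0)\oplus S_{2,g_0}^{TT}(M)$ (with the first factor replaced by $E_\lambda^\perp\cdot g_0$ for the round sphere). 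On this space the Hessian of $\mathcal{E}_{g_0}$ is negative semi-definite, and its kernel consists only of conformal rescalings $h\in\mathbb{R}g_0$, which are tangent to the one-parameter orbit $c^2g_0$ along which $\mathcal{E}_{g_0}$ is constant.

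The steps I would carry out are as follows. First, invoke the local slice theorem (\cite[Theorem 5.6]{yuan2016volume}) to fix $\mathcal{S}_{g_0}$ and a $W^{2,p}$-neighborhood $U_{g_0}$ of $g_0$; because $\mathcal{E}_{g_0}$ is diffeomorphism- and scaling-invariant, it suffices to analyze its restriction to $\mathcal{S}_{g_0}$. Second, split any tangent direction $h\in T_{g_0}\mathcal{S}_{g_0}$ as $h=\mathring h+\tfrac1n(tr_{g_0}h)g_0$ and apply Corollary \ref{cor004} to write $D^2\mathcal{E}_{g_0}(h,h)$ as a sum of a conformal term, controlled by $\mathcal{T}_{g_0}=\tfrac{n-1}{4}\lambda(\Delta_{g_0}+n\lambda)$ via Lichnerowicz--Obata, and a TT term $-2\int_M\langle(D\mathring B_{g_0})\mathring h,\mathring h\rangle$, which is nonpositive by the strict stability and Corollary \ref{corollary001}. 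Third, conclude that $\mathcal{E}_{g_0}|_{\mathcal{S}_{g_0}}$ has a strict local maximum at $g_0$ transverse to the scaling orbit: the semi-definiteness shows $\mathcal{E}_{g_0}(g)\le\mathcal{E}_{g_0}(g_0)$ for $g\in U_{g_0}$, and the equality analysis of Proposition \ref{propo004} forces any direction along which the functional does not strictly decrease to be a pure conformal rescaling $h\in\mathbb{R}g_0$.

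The main obstacle is upgrading the infinitesimal (second-order) non-degeneracy into a genuine local statement, i.e.\ passing from $D^2\mathcal{E}_{g_0}\le 0$ with the one-dimensional degenerate direction $\mathbb{R}g_0$ to the rigid conclusion $g=c^2g_0$. Since the Hessian has a kernel (the scaling direction, and possibly $E_\lambda\cdot g_0$ on the sphere), a bare Taylor expansion is insufficient; one must quotient out the scaling orbit and show that $\mathcal{E}_{g_0}$ is strictly concave in the transverse directions on the slice. I would handle this by restricting to the unit-volume (or fixed-$\mathcal{E}_{g_0}$-normalized) slice to kill the $\mathbb{R}g_0$ direction, then use the strict positivity gap provided by strict stability and by the Lichnerowicz--Obata eigenvalue bound (first eigenvalue of $\mathcal{T}_{g_0}$ at least $n\lambda$, strict except on the sphere where $E_\lambda$ accounts for the degeneracy) to obtain a definite negative upper bound for $D^2\mathcal{E}_{g_0}$ on the transverse space. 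Feeding this into a Taylor expansion with remainder on the finite-dimensional-modulo-infinite-dimensional slice, as in \cite[Proposition 5.7]{lin2021deformations}, yields that $\mathcal{E}_{g_0}(g)\ge\mathcal{E}_{g_0}(g_0)$ can only hold when the transverse part vanishes, whence $g=c^2g_0$. The remaining bookkeeping (handling the sphere's extra kernel $E_\lambda$ and verifying these directions correspond to rescalings of $g_0$) follows the cited references and I would defer its details to them.
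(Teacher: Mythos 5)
Your proposal is correct and follows essentially the same route as the paper: slice theorem, the sign and equality analysis of $D^2\mathcal E_{g_0}$ from Proposition \ref{propo004} and Corollary \ref{cor004}, strict negativity transverse to the scaling orbit (the paper realizes this by restricting to $\mathcal C_{g_0}=\{h:\int_M tr_{g_0}h\,dv_{g_0}=0\}$, identifying the critical set via Einstein rigidity, and invoking Fischer--Marsden's Lemma 5 through a weak Riemannian gradient argument), and then the same Morse-lemma-type conclusion you defer to \cite[Proposition 5.7]{lin2021deformations}. No substantive gap beyond the technical step you already flag and correctly attribute to the cited references.
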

\begin{proof}

Using the Ebin-Palais Slice Theorem (see \cite[Theorem 5.6]{lin2016deformations} and \cite[Theorem 5.3]{lin2021deformations}) there exists a local slice $\mathcal S_{g_0}$  through $g_0$, such that
$S_2(M)=T_{g_0}\mathcal S_{g_0}\oplus(T_{g_0}\mathcal S_{g_0})^\perp,$
where
\begin{itemize}
    \item $T_{g_0}\mathcal S_{g_0}:=S_{2,g_0}^{TT}(M)\oplus(C^\infty(M)\cdot g_0)$ and $(T_{g_0}\mathcal S_{g_0})^\perp=\{\delta^*X:\langle X,\nabla_{g_0}u\rangle_{L^2}=0,\forall u\in C^\infty(M)\}$
    when $(M,g_0)$ is not isometric to the round sphere, up to rescaling.
    \item $T_{g_0}\mathcal S_{g_0}:=S_{2,g_0}^{TT}(M)\oplus(E_\lambda^\perp\cdot g_0)$, $(T_{g_0}\mathcal S_{g_0})^\perp=\{\delta^*X:\langle X,\nabla_{g_0}u\rangle_{L^2}=0,\forall u\in E_\lambda^\perp\}$, when $(M,g_0)$ is isometric to the round sphere with radius $\frac{1}{\sqrt{\lambda}}$. Here, $E_\lambda=\{u\in C^\infty(\mathbb S^n(1/\sqrt{\lambda})):\Delta u+n\lambda u=0\}$ is the space of first eigenfunctions for the spherical metric.
\end{itemize}
 From Proposition \ref{propo004} we conclude that $g_0$ is a critical point of $\left.\mathcal E_{g_0}\right|_{\mathcal S_{g_0}}$ with $D^2\mathcal E_{g_0}(h,h)\leq 0$ for all $h\in T_{g_0}\mathcal S_{g_0}$. Since $g_0$ is a strictly stable Einstein metric, by \cite[Corollary 3.4]{MR558319} we conclude that $g_0$ is rigid, in the sense that there is a neighborhood $U_{g_0}$ of $g_0$ such that an Einstein metric $g\in U_{g_0}$ is of constant sectional curvature, which implies that $g=cg_0$ for some positive constant $c>0$. Define $$\mathcal Q_{g_0}:=\{g\in U_{g_0}\cap\mathcal S_{g_0}:g\mbox{ is Einstein}\}=\{g\in U_{g_0}\cap\mathcal S_{g_0}:g=cg_0,\mbox{ with }c>0\mbox{ constant}\}.$$
 In particular, the tangent space of $\mathcal Q_{g_0}$ at $g_0$ is given by $T_{g_0}\mathcal Q_{g_0}=\mathbb R g_0$ and its $L^2$-orthogonal complement $\mathcal C_{g_0}$ in $T_{g_0}\mathcal S_{g_0}$ is given by
$\mathcal C_{g_0}:=\left\{ h\in T_{g_0}\mathcal S_{g_0}: \int_Mtr_{g_0}hdv_{g_0}=0 \right\},$
since a 2-tensor $h\in T_{g_0}\mathcal S_{g_0}$ can be written as $h=\mathring h+\frac{1}{n}(tr_{g_0}h)g_0$. By Proposition \ref{propo004} we obtain that $D^2\mathcal E_{g_0}(h,h)<0$  for all $h\in\mathcal C_{g_0}$.

 Using a similar argument as in \cite[Proposition 5.8]{lin2016deformations} and \cite[Proposition 5.7]{lin2021deformations}, we define a weak Riemannian structure\footnote{The term weak  is due  the fact that \eqref{scalarprod} defines in each tangent space a topology weaker  than the current.}  on the local slice $\mathcal{S}_{g_0}.$
\begin{equation}\label{scalarprod}
( h, h)_{\bar{g}}:=\int_{M}\left[\langle h, h\rangle_{\bar{g}}+\left\langle\nabla_{\bar{g}} h, \nabla_{\bar{g}} h\right\rangle_{\bar{g}}\right] d v_{\bar{g}}=\int_{M}\left\langle\left(1-\Delta_{\bar{g}}\right) h, h\right\rangle_{\bar{g}} d v_{\bar{g}}
\end{equation}on $\mathcal{S}_{g_0}.$ 
We observe that it has a smooth connection by \cite{Ebin}. Define a vector field $Z$  on $\mathcal{S}_{g_0}$ as
$$
Z\left(\bar{g}\right):=\mathrm{V}\left(\bar{g}\right)^{\frac{4}{n}}\left(\Lambda_{\bar{g}}^{*}\left(f_{\bar{g}}\right)+\frac{2}{n} \bar{g} \mathrm{~V}\left(\bar{g}\right)^{-\frac{n+4}{n}} \mathcal{E}_{g_0}\left(\bar{g}\right)\right),
$$
where  $f_{\bar{g}}$ is a smooth positive function on $M$ satisfying $d v_{g_0}=f_{\bar{g}} d v_{\bar{g}}$. Since $g_0$  is Einstein,  we have $Z(g_0)=0$.  Note that in the scalar product \eqref{scalarprod} the gradient of $\left.\mathcal{E}_{g_0}\right|_{\mathcal{S}_{g_0}}$ at $\bar g$ is given by
$
Y\left(\bar{g}\right)=P_{\bar{g}}\left(\left(1-\Delta_{\bar{g}}\right)^{-1}\left(Z\left(\bar{g}\right)\right)\right),
$
where $P_{\bar{g}}:S_2(M)\rightarrow T_{\bar{g}}\mathcal S_{g_0}$ is the orthogonal projection to $T_{\bar{g}} \mathcal{S}_{g_0}$. This means that for any $h\in T_{\bar g}\mathcal S_{g_0}$ it holds
$D\left(\left.\mathcal E_{g_0}\right|_{\mathcal S_{g_0}}\right)_{\bar g}(h)=(h,Y(\bar g))_{\bar g}.$
This implies that for any $h=\mathring h + \frac{1}{n} (tr_{g_0}h)g_0  \in \mathcal{C}_{g_0}$ we have
$D^2\left(\left.\mathcal E_{g_0}\right|_{\mathcal S_{g_0}}\right)_{\bar g}(h,h)=(h,DY_{\bar g}(h))_{\bar g}.$
Since $\left.D^{2} \mathcal{E}_{g_0}\right|_{\mathcal{S}_{g_0}}(h,h)<0$ on $\mathcal{C}_{g_0}$, we have that $DY_{g_0}:\mathcal C_{g_0}\rightarrow\mathcal C_{g_0}$ is an isomorphism.

Using \cite[Lemma 5]{MR0380907} we can find a neighborhood $U_{g_0}\subset \mathcal S_{g_0}$ such that any metric $g\in U_{g_0}$ satisfying $\mathcal E_{g_0}(g)\geq \mathcal E_{g_0}(g_0)$ implies that $g\in \mathcal Q_{g_0}$. From the definition of $\mathcal Q_{g_0}$ we conclude that $g=c^2g_0$ for some positive constant $c>0$.

\end{proof}

Now we are ready to present a proof of Theorem \ref{theorem-B}.

\begin{proof}[Proof of Theorem \ref{theorem-B}]
By Ebin-Palais Slice Theorem \cite[Theorem 5.6]{lin2016deformations} we can find a sufficiently small  constant $\varepsilon_0>0$ such that for any metric $g$ satisfying
$ \| g - g_0\|_{C^2} < \varepsilon_0$,
 there exists a diffeomorphism $\varphi$ such that $\varphi^{*} g\in U_{g_0} \subseteq \mathcal{S}_{g_0}$, where $U_{ g_0}$ is given by Proposition \ref{proposb}.

Assume that $\sigma_2(g)\geq \sigma_2(g_0)$, $\|g - g_0\|_{C^2}<\varepsilon_0$ and
\begin{align}\label{volcomp}
	 V(g)\geq  V(g_0)
\end{align}
for some Riemannian metric $g$ on $M$. Since  there exists a diffeomorphism $\varphi$ such that $\varphi^{*}g\in U_{{g}_0}$ and 
$
\mathcal{E}_{g_0}|_{\mathcal{S}_{g_0}}(\varphi^* g)=\mathcal{E}_{g_0}|_{\mathcal{S}_{g_0}}(g)\geq  \mathcal{E}_{g_0}|_{\mathcal{S}_{g_0}}(g_0),
$
where we used \eqref{volcomp} and that  $\sigma_2(g_0)$ is constant.
By  Proposition \ref{proposb}, we have that $\varphi^{*}g=c^2g_0$ for some constant $c>0.$ Observe that  \eqref{volcomp} becomes
$$ V(g)= V(\varphi^*g)=c^n V(g_0) \geq  V(g_0).$$
Thus $c\geq 1$. On the other hand, 
$
	\sigma_2(g_0)= \sigma_2(\varphi^*g_0) \leq \sigma_2(\varphi^*g)=c^{-4}\sigma_2( g_0),
$
which implies that $c \leq 1.$
Hence, $\varphi^* g = g_0$ and the result follows. 
\end{proof}

\section{Variational Characterization of Critical Metrics of the Volume Functional}\label{sec003}

In Section \ref{closed section} we have investigated critical points of the volume functional in a manifold without boundary. In this section we study variational properties of the volume functional constrainted to the space of metrics of constant $\sigma_2$-curvature with a prescribed boundary metric. 

 Let $M^{n}$ be a connected, compact manifold of dimension $n\geq 3$ with smooth nonempty boundary $\partial M$ and a fixed boundary metric $\gamma$. Let $\mathcal M_\gamma\subset\mathcal M$ be the space of metrics on $M$ with induced metric on $\partial M$ given by $\gamma$. Let $K$ be a constant and $\mathcal M_\gamma^K$ be the space of metrics $g\in\mathcal M_\gamma$ which have constant $\sigma_2$-curvature $K$. Let $\mathcal S^{k,2}(M)$ be the space of $W^{k,2}$ symmetric 2-tensors on $M$, with $k>n/2+2$. Thus each $h\in\mathcal S^{k,2}(M)$ is $C^{2,\alpha}$ up to the boundary. By \cite[Lemma 1]{MR0380907} we get that $\sigma_2:\mathcal M_\gamma\rightarrow W^{k-2,2}(M)$ is smooth, where $W^{k-2,2}(M)$ is the space of $W^{k-2,2}$ functions on $M$.  Let $C_0^\infty(M)$ be the space of smooth function which vanishes in $\partial M$.

 \begin{lemma}\label{lemma-auxiliar-teorema-c}
     Let $g_0\in \mathcal M_{\gamma}^K$ be a 2-admissible metric on $M$ such that the first Dirichlet eigenvalue of the operator $-\mathcal T_{g_0}$ is positive.  Then there exists a neighborhood $U\subset\mathcal M$ of $g_0$ and an unique smooth function $\Phi:U\rightarrow C_0^\infty(M)$ such that for every $g\in U$ it holds $e^{2\Phi(g)} g\in\mathcal M_\gamma^K$.
 \end{lemma}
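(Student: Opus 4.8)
The plan is to solve the equation $\sigma_2(e^{2u}g)=K$ for the conformal factor $u$ by the Implicit Function Theorem in Banach spaces, where the constraint $u|_{\partial M}=0$ guarantees that the boundary metric is left unchanged. Concretely, let $W^{k,2}_0(M)$ denote the $W^{k,2}$-functions with vanishing trace on $\partial M$ (recall $k>n/2+2$, so that $W^{k,2}\hookrightarrow C^2$ and $e^{2u}g$ is a genuine metric), and consider the map
\[
\mathcal G:\mathcal M_\gamma\times W^{k,2}_0(M)\longrightarrow W^{k-2,2}(M),\qquad \mathcal G(g,u)=\sigma_2\!\left(e^{2u}g\right)-K.
\]
This map is smooth, since $\sigma_2$ is smooth on this Sobolev scale by \cite[Lemma 1]{MR0380907} and $(g,u)\mapsto e^{2u}g$ is smooth ($W^{k,2}$ being a Banach algebra for $k>n/2$), and $\mathcal G(g_0,0)=0$ because $g_0\in\mathcal M_\gamma^K$. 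Working over $\mathcal M_\gamma$ is essential: for $g\in\mathcal M_\gamma$ and $u\in W^{k,2}_0$, the trace-zero condition gives $(e^{2u}g)|_{T\partial M}=g|_{T\partial M}=\gamma$, so the deformed metric stays in $\mathcal M_\gamma$.

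Next I would linearize $\mathcal G$ in the $u$-direction at $(g_0,0)$. Differentiating $t\mapsto e^{2tv}g_0$ produces the variation $h=2v\,g_0$, so by Lemma~\ref{Lem008},
\[
D_u\mathcal G(g_0,0)(v)=\Lambda_{g_0}(2v\,g_0)=-2\,\mathcal T_{g_0}(v),
\]
with $\mathcal T_{g_0}$ as in \eqref{eq049}. The decisive point is then to show that
\[
-2\,\mathcal T_{g_0}:W^{k,2}(M)\cap W^{1,2}_0(M)\longrightarrow W^{k-2,2}(M)
\]
is a linear isomorphism. Here both hypotheses enter: since $g_0$ is $2$-admissible, the Newton tensor $T_1$ in \eqref{eq018} is positive definite, so $\mathcal T_{g_0}(u)=\tfrac12\,\mathrm{div}(T_1(\nabla u))+2u\,\sigma_2(g_0)$ is a second-order \emph{elliptic} operator, and it is $L^2$-self-adjoint. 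The assumption that the first Dirichlet eigenvalue of $-\mathcal T_{g_0}$ is positive says precisely that $0$ is not a Dirichlet eigenvalue, hence $-\mathcal T_{g_0}$ has trivial kernel on $W^{1,2}_0$. Self-adjointness together with the Fredholm alternative then yields surjectivity, and standard elliptic estimates upgrade this to the claimed $W^{k,2}$-isomorphism.

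Granting the isomorphism, the Implicit Function Theorem applied to $\mathcal G$ at $(g_0,0)$ produces a neighborhood $U$ of $g_0$ in $\mathcal M_\gamma$ and a unique smooth map $\Phi:U\to W^{k,2}_0(M)$ with $\Phi(g_0)=0$ and $\mathcal G(g,\Phi(g))\equiv 0$, that is $\sigma_2(e^{2\Phi(g)}g)=K$. For a smooth metric $g$, an elliptic bootstrap argument upgrades $\Phi(g)$ to $C^\infty(M)$, and since $\Phi(g)|_{\partial M}=0$ we obtain $\Phi(g)\in C_0^\infty(M)$ and $(e^{2\Phi(g)}g)|_{T\partial M}=\gamma$; thus $e^{2\Phi(g)}g\in\mathcal M_\gamma^K$, with uniqueness inherited directly from the Implicit Function Theorem.

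I expect the main obstacle to lie in establishing that the linearization $-\mathcal T_{g_0}$ is an isomorphism on the correct Sobolev scale: one must carefully extract ellipticity from $2$-admissibility, translate the positive-first-Dirichlet-eigenvalue hypothesis into triviality of the kernel, and invoke the appropriate elliptic regularity estimates and Fredholm theory for the Dirichlet problem. The remaining steps are a careful but routine combination of the Implicit Function Theorem with elliptic regularity.
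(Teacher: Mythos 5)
Your proposal is correct and follows essentially the same route as the paper: linearize $u\mapsto\sigma_2(e^{2u}g)$ at $(g_0,0)$ to obtain $-2\mathcal T_{g_0}$ via Lemma~\ref{Lem008}, use $2$-admissibility for ellipticity and the positive first Dirichlet eigenvalue plus the Fredholm alternative for invertibility, and conclude with the Implicit Function Theorem. Your extra care in working on the Sobolev scale $W^{k,2}_0$ rather than directly on $C_0^\infty$ (which is Fr\'echet, not Banach) and then bootstrapping is a welcome refinement of the paper's argument, not a different approach.
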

 \begin{proof}
 Consider the smooth map $\mathcal F:\mathcal M\times C_0^\infty(M)\rightarrow C^\infty(M)$ given by
 $$\mathcal F( g,u)=\sigma_2(e^{2u}g).$$
 Differentiating with respect to $u$ we get
 $$D_2\mathcal F{(g_0,0)}(v)=2\Lambda_{g_0}(vg_0)=-2\mathcal T_{g_0}(v),$$
 where we used Lemma \ref{Lem008} and $\mathcal T_{g_0}$ is defined in \eqref{eq049}. Let $f\in C^\infty(M)$ and consider the following boundary value problem
\begin{equation}
\left\{\begin{array}{rccl}
-\mathcal T_{g_0}(u) & = &f &  \text{ in $M$}\label{eq1009} \\                                 
u & = & 0 & \text{ on }\partial M.
\end{array}\right.
\end{equation}
By hypothesis $g_0$ is an admissible metric, which implies that (\ref{eq1009}) is an elliptic equation and has a unique solution by the Fredholm alternative, see \cite[Theorem 2.2.4]{MR0463624}. Thus, the operator $D_2\mathcal F{(g_0,0)}:C_0^\infty(M)\rightarrow C^\infty(M)$ is an isomorphism.  The Implicit Function Theorem for Banach spaces \cite[Theorem 5.9]{MR1666820} implies that there exists a neighborhood $U\subset\mathcal M$ of $g_0$ and  an unique smooth function $\Phi:U\rightarrow C_0^\infty(M)$ such that $\mathcal F(g,\Phi(g))=K$, for all $g\in U$, i.e. $e^{2\Phi(g)}g\in\mathcal M_\gamma^K$, for every $g\in U\subset\mathcal M$.
 \end{proof}

Next, we consider the volume functional $V:\mathcal{M}_\gamma\to\mathbb{R}$, whose  first variation (see Proposition 1.186 of \cite{besse}) is given by
\begin{equation}\label{eqvol}
DV_g(h)=\frac{1}{2}\displaystyle\int tr_ghdv_g.
\end{equation}
We are interested in critical points of $V$ restricted to $\mathcal{M}^K_{\gamma}$.
\begin{theorem}[Theorem \ref{theorem-C}] \label{main}
Let $g\in \mathcal M_{\gamma}^K$ be a 2-admissible metric such that  the first Dirichlet eigenvalue of $-\mathcal T_{g}$ is positive. Then, $g$ is a critical point of the volume functional in $\mathcal{M}_{\gamma}^K$ if and only if there exists a smooth function $f$ on $M$ such that
\begin{equation}\label{eq010}
\left\{
\begin{array}{rcl}
\Lambda_g^*(f) =  g & \text{ in } & M \\
f  =  0 & \text{ on } & \partial M.
 \end{array}
 \right.
\end{equation}
\end{theorem}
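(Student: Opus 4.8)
The plan is to use the Lagrange multiplier framework on the constrained space $\mathcal M_\gamma^K$, leveraging the adjoint relationship between $\Lambda_g$ and $\Lambda_g^*$ together with the integration-by-parts boundary terms that the presence of $\partial M$ introduces. First I would identify the tangent space to $\mathcal M_\gamma^K$ at $g$. A variation $h = g'(0)$ is admissible (tangent to $\mathcal M_\gamma^K$) precisely when it preserves the constant $\sigma_2$-curvature to first order and fixes the induced metric on the boundary; that is, $\Lambda_g(h)=0$ in $M$ and $h|_{T\partial M}=0$ on $\partial M$. A metric $g$ is a critical point of $V$ restricted to $\mathcal M_\gamma^K$ exactly when $DV_g(h)=\tfrac12\int_M tr_g h\, dv_g = 0$ for all such admissible $h$.

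The key analytic step is to exhibit a function $f$ that realizes the Lagrange multiplier. The hypothesis that $g$ is $2$-admissible and that the first Dirichlet eigenvalue of $-\mathcal T_g$ is positive guarantees, via Lemma \ref{lemma-auxiliar-teorema-c} and the Fredholm alternative applied to the elliptic operator $\mathcal T_g$, that the Dirichlet problem $\mathcal T_g(f) = -n$ with $f=0$ on $\partial M$ is uniquely solvable, or more precisely that $\Lambda_g$ is surjective onto an appropriate function space with the prescribed boundary behavior so that the constraint map is a submersion. This transversality is what lets me invoke the Lagrange multiplier theorem: the differential $DV_g$ must annihilate the kernel of the constraint differential, hence $DV_g$ lies in the image of the adjoint of the constraint. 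Concretely, criticality forces the existence of $f \in C^\infty(M)$ with $f|_{\partial M}=0$ such that $\tfrac12 tr_g h = \langle \Lambda_g^*(f), h\rangle$ modulo divergence terms, for all admissible $h$.

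The main obstacle — and the part that genuinely distinguishes the boundary case from the closed case — is handling the boundary integrals that arise when integrating by parts to move $\Lambda_g$ off $h$ and onto $f$ via the adjoint identity $\int_M f\,\Lambda_g(h)\,dv_g = \int_M \langle \Lambda_g^*(f),h\rangle\,dv_g + (\text{boundary terms})$. Because $\Lambda_g$ is a second-order operator in $h$, integrating by parts produces boundary contributions involving $h$, its normal derivative, and $f$ together with its normal derivative. The condition $h|_{T\partial M}=0$ kills some of these, and the condition $f=0$ on $\partial M$ is precisely what is needed to annihilate the remaining terms that pair $f$ against $\partial_\nu h$ or the tangential data of $h$; this is the structural reason the boundary condition $f=0$ appears in equation \eqref{eq010} rather than a Neumann-type condition. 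I would carry out this integration by parts carefully, tracking each boundary term and checking that the combination $f|_{\partial M}=0$ together with $h|_{T\partial M}=0$ leaves no residual boundary obstruction, so that the bulk identity $\Lambda_g^*(f)=g$ emerges.

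For the converse direction, I would assume $f$ solves \eqref{eq010} and run the computation in reverse: for any admissible $h$ with $\Lambda_g(h)=0$ and $h|_{T\partial M}=0$, pairing $\Lambda_g^*(f)=g$ against $h$ and integrating by parts (again using $f=0$ on $\partial M$ to discard boundary terms) yields $\int_M \langle g,h\rangle\,dv_g = \int_M tr_g h\,dv_g = \int_M f\,\Lambda_g(h)\,dv_g = 0$, which is exactly $DV_g(h)=0$. The trace relation $tr_g \Lambda_g^*(f) = -\mathcal T_g(f)$ from \eqref{eqn003}, combined with $tr_g g = n$, also lets me verify that $f$ necessarily satisfies $\mathcal T_g(f)=-n$, confirming internal consistency with the ellipticity hypothesis. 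I expect the reverse direction to be routine once the integration-by-parts bookkeeping from the forward direction is in hand.
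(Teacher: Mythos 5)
Your converse direction matches the paper's and is fine, as is your identification of the key analytic input (ellipticity of $\mathcal T_g$ and positivity of its first Dirichlet eigenvalue). The forward direction, however, has a genuine gap. You invoke the Lagrange multiplier theorem to produce $f$, but the boundary condition $f|_{\partial M}=0$ is something you must \emph{derive} in this direction, not impose: your write-up treats it as ``precisely what is needed to annihilate the remaining terms,'' which is circular. Concretely, the abstract multiplier is a priori only an element of the dual of $W^{k-2,2}(M)$ on a manifold with boundary (so it could contain a boundary-supported singular part); even after identifying it with a function $f$ by interior elliptic regularity, you would still have to show that the vanishing of the boundary bilinear form arising from integration by parts, for all $h$ with $h|_{T\partial M}=0$ but arbitrary normal components and arbitrary $\nabla_\nu h$ on $\partial M$, actually forces $f=0$ there; this rests on the nondegeneracy of $T_1$ coming from $2$-admissibility and is not carried out in your proposal.

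The paper takes a different, more economical route that avoids these issues entirely. It first \emph{constructs} $f$ as the unique solution of the Dirichlet problem $\mathcal T_g(f)=-n$ with $f=0$ on $\partial M$ (solvable by the Fredholm alternative under the eigenvalue hypothesis), so the boundary condition is built in from the start, and then verifies $\Lambda_g^*(f)=g$. Crucially, the verification does not restrict to tangent directions with $\Lambda_g(h)=0$: for an \emph{arbitrary} symmetric $2$-tensor $h$ with $h|_{T\partial M}=0$, Lemma \ref{lemma-auxiliar-teorema-c} conformally corrects $g+th$ to a path $e^{2u(t)}(g+th)$ in $\mathcal M_\gamma^K$, the linearized conformal factor satisfies $\mathcal T_g(u'(0))=\tfrac12\Lambda_g(h)$ with $u'(0)=0$ on $\partial M$, and the self-adjointness of $\mathcal T_g$ under Dirichlet data converts the criticality condition $\int_M(2nu'(0)+tr_gh)\,dv_g=0$ into $\int_M\langle h,\Lambda_g^*(f)-g\rangle\,dv_g=0$ for all such $h$, giving the pointwise identity. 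If you wish to keep the multiplier framework, you must add the regularity/identification step for the multiplier and the boundary-term analysis that extracts $f|_{\partial M}=0$; otherwise the construct-then-verify scheme of the paper is the cleaner path.
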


\begin{proof}
Suppose that $g$ is a critical point of $V$ in $\mathcal{M}_{\gamma}^K.$ Since $g$ is a 2-admissible metric and the first eigenvalue of $-\mathcal T_g$ is positive, it follows by the Fredholm alternative \cite[Theorem 2.2.4]{MR0463624} that there exists an unique function $f$ on $M$ satisfying the following equation
\begin{equation}\label{systemb}
\left\{
\begin{array}{rclcl}
\mathcal T_g(f) & = &\displaystyle -n &\ \text{ in }& M \\
  f&=&0& \text{ on }& \partial M.
  \end{array}
\right.
\end{equation}

We will prove that $f$ satisfies the equation (\ref{eq010}). Let $h$ be a smooth symmetric 2-tensor such that $h|_{T\partial M}\equiv 0$.  For small $|t|$ we have that $g(t)=g+th$ is a smooth metric in $\mathcal{M}_{\gamma}$. Define $u(t)=\Phi(g+th)$, where $\Phi$ is given by Lemma \ref{lemma-auxiliar-teorema-c}. By the uniqueness we obtain that $u(0)\equiv 0$ and $u'\equiv 0$ in $\partial M$, since $\sigma_2(g)=K$ and $u\equiv 0$ in $\partial M$. Thus, if $\tilde g(t)=e^{2u(t)}g(t)$, then 
\begin{equation}\label{eq019}
\sigma_2(\tilde g(t))=K.    
\end{equation}
 Note that 
$\tilde{g}'(0)=2u'(0)g+h$. 
Since $\tilde g(t)$ and $g(t)$ are conformal metrics, using \eqref{eq019}, their $\sigma_2$-curvature are related by 
\begin{equation}\label{eq024}
   -\mathcal T_{g(t)}(u)+\left(2u(t)+\frac{1}{2}\right)\sigma_2(g(t))-\frac{1}{2}Ke^{4u(t)}+\mathcal I_{g(t)}(u(t))=0,
\end{equation}
where $\mathcal I_{g}$ is given by \eqref{eq015}. Taking the derivative of \eqref{eq024} with respect to $t$ and using that $u(0)=0$ and $\mathcal I_g$ is quadratic in $u$ we obtain that $u'(0)$ satisfies 
\begin{equation}\label{eq027}
\left\{\begin{array}{rcll}
         \displaystyle \mathcal T_{g_0}(u'(0))  & = &\displaystyle \frac{1}{2}\Lambda_g(h)& \mbox{ in }M\\
         u'(0) & = & 0 & \mbox{ on }\partial M.
    \end{array}\right.
\end{equation}
Hence, by equation (\ref{systemb}) and using integration by parts we obtain
\begin{equation}\label{eq014}
\begin{array}{rcl}
n\displaystyle\int_{M}u'(0)dv_g&=&\displaystyle-\int_{M}u'(0)\mathcal T_g(f)dv_g=- \displaystyle\int_{M}f\mathcal T_g(u'(0))dv_g
\\
& = &- \displaystyle\frac{1}{2}\int_Mf\Lambda_g(h)dv_g=- \displaystyle\frac{1}{2}\int_M\langle h,\Lambda^*_g(f)\rangle dv_g,
\end{array}
\end{equation}
where in the third equality we have used \eqref{eq027}. Since $g$ is a critical point of $V$ in $\mathcal{M}^K_{\gamma}$, by (\ref{eqvol}), we have
$\displaystyle\int_{M}\left(2nu'(0)+tr_gh\right)dv_g=0.$ Using this and (\ref{eq014}) we obtain
$$
\displaystyle\int_{M}\left\langle h,\Lambda^*_g(f)-g\right\rangle dv_g=\displaystyle\int_{M}\left(\left\langle h,\Lambda^*_g(f)\right\rangle -tr_gh\right)dv_g=0.
$$
Since $h$ is any 2-tensor, we conclude that $\Lambda^*_g(f)=g.$

Now, suppose that $f$ satisfies the equation \eqref{eq010}. Let $h$ be a smooth symmetric 2-tensor in the tangent space of $g$ in $\mathcal M_\gamma^K$. This implies that $h|_{T\partial M}\equiv 0$ and $\Lambda_g(h)=0$. Therefore, using integration by parts, we obtain
\begin{equation}
    \begin{array}{rcl}\label{eq022}
         0 & = & \displaystyle\int_Mf\Lambda_g(h)dv_g=\int_M\langle h,\Lambda^*_g(f)\rangle dv_g \\
         &  &\displaystyle-\frac{1}{2(n-2)^2}\int_{\partial M} \left(\langle\nabla_\nu(fRic_g)\right. -\delta(fRic_g)(\nu)g,h \rangle +2\langle h(\nu),\delta(fRic_g) \rangle
         \\
         & &\left.-\dfrac{n}{2(n-1)}\left(\nabla_\nu(fR_g)tr_gh-\langle h(\nu),\nabla(fR_g)\rangle  \right)\right)d\sigma_g
         \\
         &=&\displaystyle\int_Mtr_ghdv_g=2DV(h),
    \end{array}
\end{equation}
since  $h|_{T\partial M}\equiv 0$ and $f\equiv 0$ on $\partial M$ implies that
$$\left\langle\nabla_\nu(fRic_g) -\delta(fRic_g)(\nu)g,h \right\rangle +2\langle h(\nu),\delta(fRic_g) \rangle=\nabla_\nu(fR_g)tr_gh-\langle h(\nu),\nabla(fR_g)\rangle=0,$$
on the boundary. Hence $g$ is a critical point of the volume functional in $\mathcal M_\gamma^K$.
\end{proof}

 Theorem \ref{main} gives conditions for a constant $\sigma_2$-curvature metric $g$ to be a critical point of the volume functional in $\mathcal{M}_{\gamma}^K,$ which is equivalent to the existence of  a function $f$  satisfying   $\Lambda_g^*(f) =  g$ in $M$ with  $f  =  0$ on $\partial M$.

It would be interesting to know the first variation of the volume functional in $\mathcal{M}_{\gamma}^K$ if  $\Lambda_g^*(f) =  g$ in $M$, but $f$ is not necessarily null on $\partial M.$ We could not find the expression in the general case, but restrict to a conformal class we have the following.

In 2009, Chen \cite{MR2570314} introduced the $H_k$-curvature of the boundary of a Riemannian manifold $(M^{n},g)$, which for $k=2$ one has
$$H_2=\frac{R_{\overline{g}}}{2(n-2)}H-\frac{n-1}{6}H^3-\frac{1}{n-2}\langle A_0,A_{\overline g}\rangle+\frac{1}{2(n-2)}H|A_0|^2,$$
where $A_0=A-H\overline g$, $\overline g$ is the induced metric on the boundary $\partial M$, $A$ is the second fundamental form, $H=\frac{1}{n-1}tr_{\overline g} A$ is the mean curvature of the boundary and
$A_{\overline g}$ is the Schouten tensor of the boundary $(\partial M,\overline g)$.
The definition for general $k$ can be found in \cite{MR3853045,MR2570314}.

By \eqref{eq049} we obtain
\begin{equation}\label{eq011}
\left.\frac{\partial}{\partial t}\right|_{t=0}\sigma_2(e^{2tu}g)=-2\mathcal T_{g_0}(u)
\end{equation}
and it is well known that
\begin{equation}\label{eq012}
\left.\frac{\partial}{\partial t}\right|_{t=0}H_2(e^{2tu}g)=-3uH_2(g)+T_1(\eta,\nabla_gu)-div_{\overline g}(H(g)\overline{\nabla}u),
\end{equation}
where $T_1$ is defined in \eqref{eq018}. See \cite{MR3853045,MR2570314,MR1738176} for details.

\begin{proposition}\label{VH}
Let $g\in\mathcal M_{\gamma}^K$ be a smooth metric. Let $f$ be a smooth function on $M$ such that
$\Lambda_g^*(f)=g$ on $M.$ 
Consider $\{e^{2tu}g\}$ a smooth path of conformal metrics in $\mathcal M_{\gamma}^K$. Then
$$\left.\frac{d}{dt}\right|_{t=0}V(e^{2tu}g)=\int_{\partial M}fH_2'(0)d\sigma_g,$$
where $H_2(t)$ is the $H_2$-curvature of $\partial M$ in $(M,e^{2tu}g)$ with respect to the unit outward
pointing normal vector.
\end{proposition}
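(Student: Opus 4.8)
The plan is to reduce everything to a Green-type identity for the self-adjoint operator $\mathcal T_g$ and then read off the boundary integrand from the conformal variation formula \eqref{eq012} for $H_2$. First I would record the two constraints imposed by the path $\{e^{2tu}g\}\subset\mathcal M_\gamma^K$. Since the induced boundary metric $e^{2tu}g|_{T\partial M}=e^{2tu}\gamma$ must remain equal to $\gamma$, we have $u\equiv 0$ on $\partial M$. Since $\sigma_2(e^{2tu}g)\equiv K$, differentiating at $t=0$ and using \eqref{eq011} (equivalently Lemma \ref{Lem008}) gives $\mathcal T_g(u)=0$ on $M$. The variation field is $h=\frac{d}{dt}\big|_{t=0}e^{2tu}g=2ug$, so \eqref{eqvol} yields $\frac{d}{dt}\big|_{t=0}V(e^{2tu}g)=\frac12\int_M \operatorname{tr}_g(2ug)\,dv_g=n\int_M u\,dv_g$.

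Next I would extract the information carried by $f$. Tracing the hypothesis $\Lambda_g^*(f)=g$ and using \eqref{eqn003} gives $-\mathcal T_g(f)=\operatorname{tr}_g g=n$, that is, $\mathcal T_g(f)=-n$. Combining this with $\mathcal T_g(u)=0$, I can rewrite the volume derivative as $n\int_M u\,dv_g=-\int_M u\,\mathcal T_g(f)\,dv_g=\int_M\big(f\,\mathcal T_g(u)-u\,\mathcal T_g(f)\big)\,dv_g$. The purpose of this reformulation is that the right-hand side is precisely the quantity governed by Green's identity for $\mathcal T_g$, so the whole problem becomes one of identifying the resulting boundary term.

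The core computation is then the integration by parts. Writing $\mathcal T_g(\cdot)=\tfrac12\operatorname{div}(T_1(\nabla\,\cdot\,))+2\sigma_2(g)(\cdot)$ as in \eqref{eq049}, and using that $T_1$ is symmetric and divergence free \eqref{eq018}, two integrations by parts cancel all interior contributions (the symmetry of $T_1$ removes the first-derivative pairings and the zeroth-order terms match), leaving a pure boundary integral of the form $\int_{\partial M}\big(f\,T_1(\nabla u,\eta)-u\,T_1(\nabla f,\eta)\big)\,d\sigma_g$ up to a universal constant. Because $u\equiv 0$ on $\partial M$, the second term vanishes, so $\frac{d}{dt}\big|_{t=0}V$ is expressed as a boundary integral of $f$ paired against $T_1(\nabla u,\eta)$.

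Finally I would identify this integrand with $f\,H_2'(0)$. On $\partial M$ the condition $u\equiv 0$ forces the tangential gradient $\overline\nabla u$ to vanish, which kills the term $\operatorname{div}_{\overline g}(H(g)\overline\nabla u)$ in \eqref{eq012}, and it also annihilates the pointwise term $-3uH_2(g)$; hence $H_2'(0)=T_1(\eta,\nabla_g u)$ on $\partial M$ (indeed $\nabla_g u=(\partial_\eta u)\eta$ is purely normal there). Substituting this into the boundary integral gives the claimed formula $\frac{d}{dt}\big|_{t=0}V(e^{2tu}g)=\int_{\partial M}f\,H_2'(0)\,d\sigma_g$. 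I expect the main obstacle to be the boundary-term bookkeeping: one must carry out the Green identity for $\mathcal T_g$ while tracking constants carefully, and must verify that exactly the extraneous terms of \eqref{eq012} drop out on $\partial M$, so that what survives is precisely $f\,H_2'(0)$.
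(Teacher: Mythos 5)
Your argument is correct and follows essentially the same route as the paper: both reduce the volume derivative to the boundary term produced by integrating $f$ against the linearized constraint (your Green identity for $\mathcal T_g$ is exactly the paper's integration by parts of $\int_M f\Lambda_g(h)\,dv_g$ specialized to $h=2ug$ via Lemma \ref{Lem008} and \eqref{eqn003}), and both then invoke \eqref{eq012} with $u|_{\partial M}=0$ to identify $T_1(\eta,\nabla_g u)$ with $H_2'(0)$. The only caveat is the constant you explicitly leave unchecked: with the normalizations \eqref{eq049} and \eqref{eq012} as printed, the Green identity gives $n\int_M u\,dv_g=\tfrac12\int_{\partial M}fT_1(\nabla_g u,\eta)\,d\sigma_g$, i.e.\ a factor $\tfrac12$ in front of $\int_{\partial M}fH_2'(0)\,d\sigma_g$ --- a discrepancy that is present in (and inherited from) the paper's own terse proof rather than introduced by your method.
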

\begin{proof}
Note that, by \eqref{eq022} we get
$$
\begin{array}{rcl}
\displaystyle\int_Mf\Lambda_g(h)dv_g
&=&\displaystyle\int_Mtr_ghdv_g-\int_{\partial M}fT_1(\eta,\nabla_g u)hd\sigma_g
\end{array}$$
By \eqref{eq012}, if $u\equiv 0$ in $\partial M$, we get
$\left.\frac{\partial }{\partial t}\right|_{t=0}H_2(e^{2tu}g)=T_1(\eta,\nabla_g u).$

Thus, by \eqref{eqvol} we conclude our result.
\end{proof}

\section{Critical Metrics in Space Forms}\label{space form}
Before presenting rigidity results in space forms, we give  some examples of functions satisfying equation \eqref{eq010}, such functions will be called  potential functions.

\begin{definition}
	Given a connected compact  manifold $M$ with smooth connected boundary $\partial M$, we say a metric $g$ on $M$ is a {\bf $\sigma_2$-critical metric} if there exists a potential function $f$ such that
	\begin{equation}\label{eq023}
	\left\{
	\begin{array}{rcl}
	\Lambda_g^*(f) =  g & \text{ in } & M \\
	f  =  0 & \text{ on } & \partial M.                                                
	\end{array}
	\right.
	\end{equation}
\end{definition}

We will assume that $f^{-1}(0)=\partial M$, which implies that $f$ does not change sign. In the case that $g$ satisfies  $\Lambda_{g}^*(f) = \kappa g$ in $M$, where $\kappa$ is a real constant, we also can assume that $f>0$  in $M \backslash \partial M$, since for the case $f< 0$, we only need to replace $(f,\kappa)$ by $(-f,-\kappa).$ If $\kappa = 0$ such metrics are called  \textbf{$\sigma_2$-singular metric} (see \cite{MR3828913}).

First we observe that if $g$ is an Einstein metric we can rewrite $ \Lambda_g^*(f)$ as
\begin{equation}\label{eq029}
    \Lambda_g^*(f)=\frac{R_g}{4n(n-1)}\left(\nabla^2f-(\Delta_gf)g-\frac{R_g}{n}fg\right).
\end{equation}

Now, we shall discuss the volume functional on domains in space forms. First we show two examples of $\sigma_2$-critical metrics.

\begin{example}\label{geodsphere}
Let $\Omega$ be a geodesic ball in the round sphere $\mathbb S^{n}$ with center $p$ and radius $R<\pi/2$. Consider the function $f:\Omega\to\mathbb R$ given by  $$f=\frac{4}{n-1}\left(\frac{\cos r}{\cos R}-1\right).$$
It is not difficult to see that $f$  satisfies \eqref{eq010}.
\end{example}

\begin{example}\label{geodhiperb}
Consider the hyperboloid model for hyperbolic space 
 $
 \mathbb H^{n}=\{x\in \mathbb{R}^{n,1}; t>0,\langle x,x \rangle_L=-1\},$ where $\mathbb{R}^{n,1}$ is the Minkowski space with the standard flat metric.
  Let $p=(1,0,\cdots,0)\in\mathbb{H}^{n},$ and $\Omega$ be a geodesic ball in the hyperboloid model for hyperbolic space $\mathbb H^{n}$ with center $p$ and radius $R$. It is not difficult to see that the function 	$$f(t,x_1,\cdots,x_{n})=\dfrac{4}{n-1}\left(\dfrac{\cosh r}{\cosh R}-1\right),$$ satisfies \eqref{eq010}, where $r=\cosh^{-1} t$ is the geodesic distance from $(t,x_1,\cdots,x_{n})$ to $p.$ 
\end{example}

Next we show that the only domains in the space forms $\mathbb{H}^{n}$ or $\mathbb{S}^{n}$, on which the canonical metrics are critical points, are geodesic balls.

\begin{theorem} \label{classi}Let $(\Omega,g)$ be 
a bounded connected domain with smooth boundary $\partial \Omega$ with a fixed boundary 
metric $\gamma=g|_{T\partial \Omega}$ in a simply connected space form in $\mathbb{H}^{n}$ or $\mathbb{S}^{n}$. In the case of $\Omega\subset\mathbb{S}^{n},$ we also assume that $V(\Omega)<\frac{1}{2}V(\mathbb{S}^{n}).$ Suppose that $g$ is a critical point  of the volume functional $V$ in $M_{\gamma}^K,$ where $K=\frac{n(n-1)}{8}.$
Then, the corresponding space form metric is a critical metric on $\Omega$ if and only if $\Omega$ is a geodesic ball.
\end{theorem}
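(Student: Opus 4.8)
The plan is to combine Theorem~\ref{main} with the special form \eqref{eq029} of $\Lambda_g^*$ on an Einstein background, reduce the equation $\Lambda_g^*(f)=g$ to a single Obata-type (concircular) equation, and then read off the geometry of the level sets of the potential. The reverse implication is already contained in Examples~\ref{geodsphere} and \ref{geodhiperb}: on a geodesic ball they exhibit an explicit $f$ solving \eqref{eq010}, so by Theorem~\ref{main} the canonical metric is a critical point. So I would concentrate on the direct implication.

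First I would suppose $g$ is critical and invoke Theorem~\ref{main} to produce $f$ with $\Lambda_g^*(f)=g$ in $\Omega$ and $f=0$ on $\partial\Omega$. Since the ambient metric is Einstein with constant scalar curvature, \eqref{eq029} applies; because $R_g\neq0$, the identity $\Lambda_g^*(f)=g$ exhibits $\nabla^2 f$ as a pointwise multiple of $g$, and taking traces gives $\nabla^2 f=\tfrac1n(\Delta_g f)g$. Taking the trace of $\Lambda_g^*(f)=g$ itself yields the scalar relation $-(n-1)\Delta_g f-R_g f=\tfrac{4n^2(n-1)}{R_g}$. The normalization $K=\tfrac{n(n-1)}{8}$ together with $\sigma_2(g)=\tfrac{1}{8n(n-1)}R_g^2$ for Einstein $g$ (see \eqref{eq007}) forces $R_g=\varepsilon\,n(n-1)$ with $\varepsilon=+1$ on $\mathbb S^n$ and $\varepsilon=-1$ on $\mathbb H^n$. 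Eliminating $\Delta_g f$ between the two relations and setting $w:=f+\tfrac{4}{n-1}$, everything collapses to
\begin{equation*}
\nabla^2 w=-\varepsilon\, w\, g \qquad\text{in }\Omega.
\end{equation*}

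Next I would extract the geometry of the level sets of $w$. Since $\Lambda_g^*(0)=0\neq g$, the function $f$, hence $w$, is non-constant; as $\Omega$ is compact and $f\equiv0$ on $\partial\Omega$, $f$ attains an interior extremum at some $p$, so $\nabla w(p)=0$. Along a unit-speed geodesic $\gamma$ from $p$ the equation gives $\tfrac{d^2}{dt^2}w(\gamma(t))=-\varepsilon\,w(\gamma(t))$, which with $\tfrac{d}{dt}w(\gamma(0))=0$ integrates to $w(\gamma(t))=w(p)\cos t$ on $\mathbb S^n$ and $w(\gamma(t))=w(p)\cosh t$ on $\mathbb H^n$; thus $w$ depends only on $r=d(p,\cdot)$. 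A direct differentiation of $\nabla^2 w=-\varepsilon wg$ shows $|\nabla w|^2+\varepsilon w^2$ is constant on the connected set $\Omega$, equal to $w(p)^2$; if $w(p)=0$ the geodesic ODE would force $w\equiv0$, contradicting $w=\tfrac{4}{n-1}$ on $\partial\Omega$, so $w(p)\neq0$ and $\nabla^2w(p)=-\varepsilon w(p)g$ is definite. Hence $p$ is a nondegenerate isolated critical point and the regular level sets of the radial function $w$ are exactly the geodesic spheres centered at $p$. Since $\partial\Omega=\{f=0\}=\{w=\tfrac{4}{n-1}\}$ is one such level set, it is a geodesic sphere centered at $p$ and $\Omega$ is the ball it bounds; on $\mathbb S^n$ the hypothesis $V(\Omega)<\tfrac12 V(\mathbb S^n)$ ensures $r<\pi/2$ and that the antipode of $p$ does not lie in $\Omega$, giving a genuine geodesic ball of radius $<\pi/2$ as in Example~\ref{geodsphere}.

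The hard part, I expect, is the passage from the geodesic-wise identity to the global statement that $w$ is radial and its level sets are geodesic spheres rather than horospheres or equidistant hypersurfaces (the genuinely different alternatives in $\mathbb H^n$). This is precisely the classification of concircular functions on space forms, and the crucial leverage is the \emph{nondegenerate interior critical point} $p$ supplied by $f|_{\partial\Omega}=0$, together with the first integral $|\nabla w|^2+\varepsilon w^2\equiv w(p)^2$: the latter makes $|\nabla w|$ a function of $w$ alone, so the level sets are equidistant, while the integral curves of $\nabla w$ are (reparametrized) geodesics emanating from $p$, forcing the foliation by metric spheres about $p$. Care is needed here if $\Omega$ is not geodesically convex, but the first integral and the single critical point suffice to conclude.
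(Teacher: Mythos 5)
Your proposal is correct and follows essentially the same route as the paper: invoke Theorem \ref{main} to produce the potential $f$, use the Einstein identity \eqref{eq029} to reduce $\Lambda_g^*(f)=g$ to the concircular equation $\nabla^2 w=-\varepsilon wg$ for $w=f+\tfrac{4}{n-1}$, and conclude that the level sets are geodesic spheres centered at the interior critical point. The only difference is that the paper delegates this last classification step to (a slight modification of) Theorem 6 of Miao--Tam \cite{MR2546025}, whereas you carry out the Obata-type argument directly; your sketch of it is sound, modulo the sign of the first integral (which should be $|\nabla w|^2+\varepsilon w^2\equiv\varepsilon\,w(p)^2$) and the standard remark that the 1-jet of $w$ propagates by a linear ODE along arbitrary curves, which handles non-star-shaped domains.
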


\begin{proof}
We observe that if $\Omega\subset\mathbb{H}^{n}$ 
or $\Omega\subset\mathbb{S}^{n},$
then $R_g=-n(n-1)$  or 
 $R_g=n(n-1),$ respectively. In the special case that the volume of $\Omega\subset \mathbb S^n$ is less than the volume of a hemisphere,  $\Omega$ must be strictly contained in the upper hemisphere and the first eigenvalue of $\mathcal{T}_g$ is positive by the  Faber-Krahn inequality \cite{sperner1973symmetrisierung}. Under our assumption $g$ is a critical point of the volume functional $V|_{\mathcal{M}_{\gamma}^K}$ if and only if there exists a smooth function $f$ on $M$ such that $f  =  0$  on $\partial \Omega$ and

 	\begin{equation}
\left\{
\begin{array}{rcll}
\displaystyle R_g\nabla^2f+\frac{R_g^2}{n(n-1)}fg &=& -4n g & \text{ in }  \Omega \\
f &=&0   & \text{ on }  \partial \Omega. 
\end{array}
\right.
\end{equation} The theorem follows by a slightly modification of the proof of Theorem 6 in  \cite{MR2546025} we obtain the desired result.
 \end{proof}

The following is an immediate consequence of the theorem.

\begin{coring} Let $\Omega$ be a connected domain with compact closure in $\mathbb{H}^{n}$ or $\mathbb{S}^{n}$ and with a smooth (possibly disconnected) boundary $\partial \Omega.$  Let $g$ be the standard metric in $\Omega.$ If $\Omega\subset \mathbb{S}^{n},$ we also assume that $V(\Omega)<\frac{1}{2}V(\mathbb{S}^{n}).$ Then, $\Omega$ is a geodesic ball if and only if 
$$\int_{\partial \Omega}H_2'(0)d\sigma_g=0,$$
for any smooth variation 
$\{e^{2tu}g\}$ of $g$ in $\mathcal M_{\gamma}^K$. 
\end{coring}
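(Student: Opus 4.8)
The plan is to deduce the corollary from the rigidity in Theorem \ref{classi} together with the first-variation formula of Proposition \ref{VH}, after producing an explicit potential function for the space-form metric. First I would record that the standard metric $g$ on $\Omega\subset\mathbb{H}^n$ or $\Omega\subset\mathbb{S}^n$ is Einstein with $R_g^2=n^2(n-1)^2$, hence $\sigma_2$-Einstein with $\sigma_2(g)=\frac{1}{8n(n-1)}R_g^2=\frac{n(n-1)}{8}=K$, so that $g\in\mathcal M_\gamma^K$. Substituting a constant into the Einstein expression \eqref{eq029} gives $\Lambda_g^*(c)=-\frac{R_g^2}{4n^2(n-1)}c\,g=-\frac{n-1}{4}c\,g$, so the constant function $c:=-\frac{4}{n-1}$ satisfies $\Lambda_g^*(c)=g$ on all of $\Omega$. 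The key point is that $c\neq0$: this is a potential function that does \emph{not} vanish on $\partial\Omega$, which is exactly the setting of Proposition \ref{VH}.

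With this $f=c$ in hand, I would apply Proposition \ref{VH}: along every smooth conformal variation $\{e^{2tu}g\}$ of $g$ in $\mathcal M_\gamma^K$ one obtains
$$\left.\frac{d}{dt}\right|_{t=0}V(e^{2tu}g)=\int_{\partial\Omega}c\,H_2'(0)\,d\sigma_g=-\frac{4}{n-1}\int_{\partial\Omega}H_2'(0)\,d\sigma_g.$$
Since $-\frac{4}{n-1}\neq0$, the vanishing of $\int_{\partial\Omega}H_2'(0)\,d\sigma_g$ for all such variations is equivalent to the vanishing of the first variation of $V$ along the conformal directions. Before invoking Theorem \ref{classi} I would verify its hypotheses in both models: for $\Omega\subset\mathbb{H}^n$ the operator $-\mathcal T_g$ has positive first Dirichlet eigenvalue automatically, while for $\Omega\subset\mathbb{S}^n$ the hypothesis $V(\Omega)<\frac12V(\mathbb{S}^n)$ places $\Omega$ strictly inside a hemisphere and yields positivity via the Faber--Krahn inequality, exactly as in the proof of Theorem \ref{classi}.

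The two implications then read as follows. If $\Omega$ is a geodesic ball, Theorem \ref{classi} says $g$ is a critical point of $V$ on $\mathcal M_\gamma^K$, so its first variation vanishes along every admissible direction, in particular the conformal ones, and the displayed identity forces $\int_{\partial\Omega}H_2'(0)\,d\sigma_g=0$ for all conformal variations. For the converse I would upgrade the conformal information to full criticality: the positivity of the first Dirichlet eigenvalue of $-\mathcal T_g$ lets me solve $\mathcal T_g(\tilde f)=-n$ with $\tilde f|_{\partial\Omega}=0$ by the Fredholm alternative, and then the computation carried out in the proof of Theorem \ref{main} identifies this $\tilde f$ as a potential, i.e. $\Lambda_g^*(\tilde f)=g$ with $\tilde f|_{\partial\Omega}=0$; hence $g$ is critical and Theorem \ref{classi} gives that $\Omega$ is a geodesic ball.

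The step I expect to be the main obstacle is precisely this converse upgrade: passing from the integrated boundary condition $\int_{\partial\Omega}H_2'(0)\,d\sigma_g=0$ (which a priori only constrains conformal directions) to the full critical-point equation $\Lambda_g^*(f)=g$, $f|_{\partial\Omega}=0$ of Theorem \ref{main}. The delicate points are to pin down the admissible class of conformal paths inside $\mathcal M_\gamma^K$ and to track how the constant potential $c$ enters the boundary terms of Proposition \ref{VH}, and to confirm that the Einstein structure of $g$ together with the eigenvalue positivity makes the conformal directions sufficient to detect criticality, so that no genuinely non-conformal variation is needed.
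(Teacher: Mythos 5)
Your route is the paper's route: you produce the constant potential $f=-\tfrac{4}{n-1}$ from \eqref{eq029}, feed it into Proposition \ref{VH} to get $\left.\frac{d}{dt}\right|_{t=0}V(e^{2tu}g)=-\tfrac{4}{n-1}\int_{\partial\Omega}H_2'(0)\,d\sigma_g$, and then invoke the characterization of geodesic balls as the critical domains (Theorem \ref{classi}, together with Examples \ref{geodsphere}--\ref{geodhiperb} and Theorem \ref{main} for the ``geodesic ball $\Rightarrow$ critical'' implication). The computation of the constant, the verification that $\sigma_2(g)=K=\tfrac{n(n-1)}{8}$, and the eigenvalue checks all match the paper, and the forward implication (geodesic ball $\Rightarrow$ the boundary integral vanishes) is correct as you present it.

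The one place where your proposal goes beyond the paper is the converse ``upgrade,'' and the specific fix you sketch does not work. You propose to solve $\mathcal T_g(\tilde f)=-n$ with $\tilde f|_{\partial\Omega}=0$ and then ``run the computation of Theorem \ref{main}'' to conclude $\Lambda_g^*(\tilde f)=g$. But that computation derives $\int_\Omega\langle h,\Lambda_g^*(\tilde f)-g\rangle\,dv_g=0$ from the vanishing of $V'(0)$ along the paths $e^{2u(t)}(g+th)$ for \emph{arbitrary} symmetric $2$-tensors $h$ with $h|_{T\partial\Omega}=0$; these paths are not conformal to $g$. If you only test against conformal directions $h=vg$, you obtain $\int_\Omega v\,\mathrm{tr}_g\bigl(\Lambda_g^*(\tilde f)-g\bigr)dv_g=0$, i.e.\ $\mathcal T_g(\tilde f)=-n$, which is exactly how $\tilde f$ was constructed and carries no information about the trace-free part of $\Lambda_g^*(\tilde f)-g$. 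So conformal criticality alone does not yield the full potential equation, and the gap you flag is real, not merely technical. To be fair, the paper's own proof is silent on this point: it simply treats the corollary as a reformulation of Theorem \ref{classi} via Proposition \ref{VH}, so the bridge from ``the boundary integral vanishes for all conformal variations'' to ``$g$ is a critical point of $V$ in $\mathcal M_\gamma^K$'' is left implicit there as well. If you want an honest converse, you should either restrict the statement to the forward implication or argue directly (e.g.\ via the structure of static potentials on space forms) rather than through the Theorem \ref{main} mechanism with only conformal test directions.
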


\begin{proof}

Since $\Omega$  is a connected domain with compact closure in $\mathbb{H}^{n}$ or $\mathbb{S}^{n},$ 
 the constant function $f=-4/(n-1)$ satisfies $\Lambda_g^*(f)=g.$ Then Proposition \ref{VH} implies 
$$
\left.\frac{d}{dt}\right|_{t=0}V(e^{2tu}g)=-\frac{4}{n-1}\int_{\partial \Omega}H_2'(0)d\sigma_g.
$$
By Theorem \ref{classi}, $g$ is a critical point of the volume functional $V$ in $\mathcal{M}_{\gamma}^K,$ and so 
$$\int_{\partial \Omega}H_2'(0)d\sigma_g=0,$$
for any smooth conformal variation  $\{e^{2tu}g\}$ of $g$ in $\mathcal{M}_{\gamma}^K.$ This completes the proof. 
\end{proof}

In particular, motivated by results of \cite{miao2011einstein} we have

\begin{proposition}
Suppose $(\Omega^{n},g)$ is a connected, compact, Einstein manifold with a smooth boundary $\partial \Omega.$ If there is a function $f$ on $\Omega$ such that $f=0$ on $\partial \Omega$ and $\Lambda^*_g(f)=g$
in $\Omega,$ then $(\Omega,g)$ is isometric to a geodesic ball in a simply connected space form $\mathbb{H}^n$ or $\mathbb{S}^n.$
\end{proposition}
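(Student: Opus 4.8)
The plan is to show that, on an Einstein background, the equation $\Lambda_g^*(f)=g$ is nothing but the Miao--Tam (V-static) critical equation, and then to invoke the Miao--Tam classification of Einstein critical metrics. Since $g$ is Einstein with $n\geq 3$, Schur's lemma gives that $R_g$ is a constant and $Ric_g=\frac{R_g}{n}g$. First I would insert this into the Einstein expression \eqref{eq029} for $\Lambda_g^*(f)$ to obtain
\[
\Lambda_g^*(f)=\frac{R_g}{4n(n-1)}\left(\nabla^2 f-(\Delta_g f)g-\frac{R_g}{n}fg\right)=\frac{R_g}{4n(n-1)}\,L_g^*(f),
\]
where $L_g^*(f)=\nabla^2 f-(\Delta_g f)g-fRic_g$ is the $L^2$-formal adjoint of the linearized scalar curvature recalled in Section \ref{sec-background}. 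Hence the hypothesis $\Lambda_g^*(f)=g$ is equivalent to $\frac{R_g}{4n(n-1)}L_g^*(f)=g$ on $\Omega$.

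Next I would note that $R_g\neq 0$: were $R_g=0$, the right-hand side of the displayed identity would vanish identically, contradicting $\Lambda_g^*(f)=g$ in the interior. Therefore $c:=\frac{4n(n-1)}{R_g}$ is a well-defined nonzero constant, and setting $\tilde f:=f/c$ yields $L_g^*(\tilde f)=g$ with $\tilde f=0$ on $\partial\Omega$. This is precisely the Miao--Tam critical metric equation, so $g$ is a V-static metric on the compact manifold with boundary $\Omega$, with potential $\tilde f$. Tracking signs as in Examples \ref{geodsphere} and \ref{geodhiperb}, one has $R_g>0$ and $f>0$ in the interior in the spherical case and $R_g<0$ and $f<0$ in the hyperbolic case, so that $\tilde f>0$ inside in both situations; this matches the standard sign normalization of the Miao--Tam potential.

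Finally I would apply the classification of compact Einstein V-static metrics of P. Miao and L.-F. Tam \cite{miao2011einstein}: a connected, compact Einstein manifold with boundary carrying a potential that vanishes on the boundary and solves the critical equation is isometric to a geodesic ball in a simply connected space form. Since $R_g\neq 0$, the flat model $\mathbb{R}^n$ is excluded, leaving exactly $\mathbb{S}^n$ (for $R_g>0$) and $\mathbb{H}^n$ (for $R_g<0$), which is the asserted conclusion. The only genuinely substantive step is the reduction in the first paragraph, namely recognizing that the fourth-order operator $\Lambda_g^*$ degenerates, on Einstein metrics, to a constant multiple of the second-order operator $L_g^*$; once this collapse is in hand, the geometric rigidity is entirely supplied by the cited Miao--Tam theorem, so no further analytic obstacle arises within this argument.
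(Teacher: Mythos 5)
Your proposal is correct and follows essentially the same route as the paper: both reduce $\Lambda_g^*(f)=g$ on an Einstein background to the Miao--Tam equation $L_g^*(\tilde f)=g$ via the identity $\Lambda_g^*(f)=\frac{R_g}{4n(n-1)}L_g^*(f)$ from \eqref{eq029}, and then conclude by the Miao--Tam classification in \cite{miao2011einstein}. Your explicit observation that $R_g\neq 0$ (which both justifies the rescaling of $f$ and excludes the flat model) is a small point the paper glosses over by simply normalizing $R_g=\pm n(n-1)$, but the argument is the same.
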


\begin{proof}
We observe that if $g$ is an Einstein metric then by equation \eqref{eq029}, we obtain that $$\Lambda^*_g(f)=\dfrac{R_g}{4n(n-1)}L_g^*(f),$$
where $L_g^*$ is the $L^2$-formal adjoint of linearization of scalar curvature (see \cite{MR0380907}).
Note that $R_g=n(n-1)k,$ where $k=1$ or $k=-1.$ Thus, $\Lambda^*_g(f)=\dfrac{k}{4}L_g^*(f).$ Now, using the same idea that in Lemma 2.1, Lemma 2.2 and Theorem 2.1 in \cite{miao2011einstein} we conclude this result.
\end{proof}

\begin{remark}
Given a positive constant $k$ consider a geodesic ball $\Omega_{-k}$ in the hyperbolic space $\mathbb H^n_{-k}$ with sectional curvature $-k$. Consider the product $M=\Omega_{-k}\times\mathbb S^n_k$ with the product Riemannian metric $g$. Here $\mathbb S^n_k$ is the round sphere with sectional curvature $k$. It is well known that the canonical metric in $\mathbb S^n_k$ is a critical point for the volume functional restricted to the space of Riemannian metric with constant scalar curvature (see \cite{MR3096517}, for instance). But, using \eqref{eq007} and Theorem \ref{main} we find that the canonical metric in $\mathbb S^n_k$ is a critical point for the volume functional restricted to the space of Riemannian metrics with constant $\sigma_2$-curvature. By the Example \ref{geodhiperb} and Theorem \ref{main} we have that the canonical metric in $\Omega_{-k}$ is a critical point of the volume functional in $\mathcal M_\gamma^{-k}$. 

Note that $(M,g)$ is a smooth compact manifold with nonempty boundary, locally conformally flat and non Einstein manifold, scalar curvature identically zero and $\sigma_2(g)=-\frac{n(n-1)^2}{(n-2)^2}k^2=:c<0$. Since the volume of $M$ is the product of the volume of $\Omega_{-k}$ and $\mathbb S^n_k$, we conclude that $g$ is a critical metric of the volume functional restricted to $\mathcal M_{\overline\gamma}^c$, where $\overline\gamma$ is the fixed metric in the boundary $\partial M$.
\end{remark}

\section{Second variational formula for the volume functional}\label{secondvariat}

In this section we will use the second derivative of the $\sigma_2$-curvature given by Proposition \ref{lem004} to find the second variation of the volume functional at a critical metric in $\mathcal M_\gamma^K$. Then we find a direction where this variation is strictly negative.

\begin{theorem}\label{teo003}
Let $(M,g)$ be a  connected compact Riemannian  manifold of dimension $n$ with a nonempty smooth boundary, such that the first Dirichlet eigenvalue of $-\mathcal T_g$ is positive. Suppose $g$ has constant sectional curvature $\kappa$ and that there is a smooth function $f$ on $M$ satisfying 
\begin{equation}\label{equa001}
\left\{
\begin{array}{rcl}
\Lambda_g^*(f) =  g & \text{ in } & M \\
f  =  0 & \text{ on } & \partial M. 
 \end{array}
 \right.
\end{equation}
Let $\gamma=g_{T\partial M}$ and let $K$ be the constant that equals the $\sigma_2$-curvature of $g$. Suppose $\{g(t)\}$ is a smooth path of metrics in $\mathcal M_\gamma^K$ with $g(0)=g$. Let $V(t)$ be the volume of $(M,g(t))$, then
\begin{align*}
     V''(0)            = & \displaystyle\int_M\left(\frac{1}{4}(tr_gh)^2+f\left(\frac{1}{8(n-2)^2}\left| \Delta h+\nabla^2tr_gh+2\delta^*\delta(h)+2\kappa ((tr_gh)g-h)\right|^2+\frac{\kappa}{16} |\nabla h|^2\right.\right.
     \\
          & \displaystyle +\frac{\kappa^2}{8}|h|^2-\frac{3}{8}k^2(tr_gh)^2-\frac{\kappa}{4}|\delta h|^2-\kappa  \left\langle h,\frac{5n^2-14n+14}{4(n-2)^2}\delta^*\delta h+\frac{n^2-3n+3}{4(n-2)^2}\nabla^2tr_gh\right\rangle
     \\
     & 
     -\dfrac{n}{8(n-1)(n-2)^2}\left(\Delta tr_gh-\delta^2h+\kappa(n-1)tr_gh\right)^2 +\frac{n^2+2n-2 }{4(n-2)^2}\kappa  \left|\delta h+\frac{1}{2}\nabla tr_gh\right|^2
      \\
      &   \displaystyle\left.\left. -\frac{n-1}{2(n-2)^2}\kappa \left( \delta\left(h\circ \left(\delta h+\frac{1}{2}\nabla tr_gh\right)\right)+\left\langle \delta h+\frac{1}{2}\nabla tr_gh, \frac{1}{2}\nabla tr_gh\right\rangle\right)\right)\right),
\end{align*}
    where $h=g'(0)$, $h'= g''(0)$ and all covariant derivative are with respect to $g$.
\end{theorem}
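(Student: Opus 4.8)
The plan is to start from the universal expansion of the volume element and then feed in the two structural facts we are given: that the deformation stays inside $\mathcal M_\gamma^K$, and that $g$ is a space form carrying a potential $f$ with $\Lambda_g^*(f)=g$. Recall, as already used in the proof of Proposition \ref{sv}, that
$$V''(0)=\frac12\int_M\Big(tr_g h'-|h|^2+\tfrac12(tr_g h)^2\Big)\,dv_g .$$
Since $g(t)|_{T\partial M}=\gamma$ for all $t$, differentiating once and twice gives the boundary conditions $h|_{T\partial M}=0$ and $h'|_{T\partial M}=0$; in particular $tr_g h=h(\nu,\nu)$ and $tr_g h'=h'(\nu,\nu)$ on $\partial M$, which is what will make the relevant boundary integrals vanish.

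First I would dispose of the $tr_g h'$ term. Because $\sigma_2(g(t))\equiv K$ we have $\sigma_2''(0)=0$, and since $g$ has constant sectional curvature $\kappa$, Corollary \ref{cor003} turns this into the pointwise identity $\Lambda_g(h')=-\frac{1}{2(n-2)^2}I$, where $I$ is the explicit quadratic form in $h$ stated there. Pairing with the potential and using that $\Lambda_g^*$ is the $L^2$-adjoint of $\Lambda_g$ together with $\Lambda_g^*(f)=g$ gives
$$\int_M f\,\Lambda_g(h')\,dv_g=\int_M\langle h',\Lambda_g^*(f)\rangle\,dv_g=\int_M tr_g h'\,dv_g,$$
the boundary contributions being exactly those in \eqref{eq022}, which vanish here because $f=0$ on $\partial M$ and $tr_g h'=h'(\nu,\nu)$ on $\partial M$ (the same cancellation used in Theorem \ref{main}). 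Hence $\int_M tr_g h'\,dv_g=-\frac{1}{2(n-2)^2}\int_M f\,I\,dv_g$, and substituting leaves
$$V''(0)=\frac14\int_M(tr_g h)^2\,dv_g-\frac12\int_M|h|^2\,dv_g-\frac{1}{4(n-2)^2}\int_M f\,I\,dv_g .$$

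The only summand not yet carrying the weight $f$ is $-\frac12\int_M|h|^2$, and placing it under $f$ is the crux. Here I would use the potential equation: by \eqref{eq029}, on a space form $\Lambda_g^*(f)=g$ is equivalent to $\nabla^2 f-(\Delta_g f)g-(n-1)\kappa f g=\frac{4}{\kappa}g$, whose trace yields the scalar relation $\Delta_g f=-n\kappa f-\frac{4n}{\kappa(n-1)}$. Writing $1=-\frac{\kappa(n-1)}{4n}(\Delta_g f+n\kappa f)$, multiplying by $|h|^2$ and integrating by parts (legitimate since $f=0$ on $\partial M$) rewrites $\int_M|h|^2$ as an $f$-weighted integral of $\Delta_g|h|^2$ and $|h|^2$ plus a boundary term. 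From this point the proof becomes a long reduction: one expands $I$ from Corollary \ref{cor003}, integrates by parts those terms whose shape does not yet match the statement (notably the $\Delta_g|h|^2$ and the divergence contributions), and repeatedly uses the constant-curvature identities $Ric_g=(n-1)\kappa g$, $R_g=n(n-1)\kappa$ and the curvature tensor \eqref{eq054} to commute covariant derivatives and absorb the resulting zeroth-order curvature terms. Collecting everything produces the square $|\Delta h+\nabla^2 tr_g h+2\delta^*\delta(h)+2\kappa((tr_g h)g-h)|^2$, the scalar square $(\Delta_g tr_g h-\delta^2 h+\kappa(n-1)tr_g h)^2$, the terms in $|\nabla h|^2$, $|\delta h|^2$, $|h|^2$ and $(tr_g h)^2$, and the pairing and divergence terms displayed in the statement.

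The main obstacle is precisely this final bookkeeping. Each integration by parts against the weight $f$ generates a boundary integral, and the nontrivial point is that all of them cancel: using $f=0$, $\nabla f=(\nabla_\nu f)\nu$ and $tr_g h=h(\nu,\nu)$ on $\partial M$, the boundary terms produced by the $|h|^2$ conversion are expected to cancel against those coming from the $\Delta_g|h|^2$ and divergence pieces of $\int_M fI$, leaving a formula with no boundary integral. Matching the interior coefficients is then the delicate part: reconciling the $\frac{\kappa^2}{8}|h|^2$ and $-\frac38\kappa^2(tr_g h)^2$ weights and the modified coefficients of $\langle h,\delta^*\delta h\rangle$ and $\langle h,\nabla^2 tr_g h\rangle$, which differ from their counterparts in $I$ exactly by the contributions of the converted $|h|^2$ term, requires keeping careful track of every commutator and divergence generated along the way.
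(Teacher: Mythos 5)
Your proposal follows essentially the same route as the paper: the same second-variation formula for $V$, the same use of $\sigma_2''(0)=0$ with Corollary \ref{cor003} and the adjoint identity $\int_M f\Lambda_g(h')=\int_M tr_gh'$ (with the boundary terms killed by $f=0$ and $h|_{T\partial M}=0$), and the same mechanism — the space-form identities $\Delta_g f=-\tfrac{4n}{(n-1)\kappa}-n\kappa f$, $\nabla^2f=(-\tfrac{4}{(n-1)\kappa}-\kappa f)g$ — to absorb the unweighted $-\tfrac12\int_M|h|^2$ into the $f$-weighted integrand; the paper performs this last conversion by integrating $\int_M f(\Delta|h|^2-g^{ij}g^{pq}g^{sl}\nabla_ph_{sj}\nabla_lh_{qi})$ by parts, which is your substitution read in the opposite direction. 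The remaining coefficient collection you defer is exactly the bookkeeping the paper carries out in its equations (\ref{eq056})--(\ref{eq057}).
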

\begin{proof}
We first note that $V'(0)=0$ by \eqref{equa001} and  Theorem \ref{main}.  Note that 
\begin{equation}\label{eq058}
    V''(0)=\int_M\left(\frac{1}{4}(tr_gh)^2-\frac{1}{2}|h|^2+\frac{1}{2}tr_gh'\right)dv_g,
\end{equation}
where $h=g'(0)$ and $h'=g''(0)$. Our aim is to express the last integral in terms of $h$. Applying the fact that $g(t)$ has constant $\sigma_2$-curvature $K$ and Corollary \ref{cor003}, we have
\begin{equation}\label{eq052}
    0=\sigma_2''(0)=c_n^{-1}I+\Lambda_g(h'),
\end{equation}
where $c_n=2(n-2)^2$ and $I$ involves only $h$ and its derivatives. Integrating by parts, we have
\begin{align*}
      \displaystyle \int_Mf\Lambda_g(h') = & \displaystyle \int_M\langle h',\Lambda_g^*(f)\rangle  -c_n^{-1}\int_{\partial M}\left(\langle\nabla_\nu(fRic_g)-\delta(fRic_g)(\nu)g,h' \rangle +2\langle h'(\nu),\delta(fRic_g) \rangle\right. \\
       &\left.-\dfrac{n}{2(n-1)}\left(\nabla_\nu(fR_g)tr_gh'-\langle h'(\nu),\nabla(fR_g)\rangle  \right)\right).
\end{align*}
 By \eqref{equa001} we have $\langle h',\Lambda_g^*(f)\rangle=tr_gh'$. Using that $h|_{T\partial M}\equiv 0$ and $f\equiv 0$ on $\partial M$, as in \eqref{eq022}, and \eqref{eq052} we obtain that 
\begin{equation}\label{eq056}
    \int_Mtr_gh'=\int_Mf\Lambda_g(h')=-c_n^{-1}\int_MfIdv_g.
\end{equation}

Since $g$ has constant sectional curvature $\kappa$, then $R_{ijkl} =\kappa (g_{il}g_{jk}-g_{ik}g_{jl})$, $Ric_g=(n-1)\kappa g$, $R_g=n(n-1)\kappa$, $\sigma_2(g)=\frac{n(n-1)}{8}k^2$, $\Lambda_g^*(f)=\frac{\kappa}{4}\left(\nabla_g^2f-(\Delta_gf)g-(n-1)\kappa fg\right)$ and $tr_g\Lambda_g^*(f)=-\frac{n-1}{4}\kappa(\Delta_gf+n\kappa f)$. By \eqref{equa001} we obtain
\begin{equation}\label{eq055}
    \Delta_gf=-\frac{4n}{(n-1)\kappa}-n\kappa f\quad\mbox{ and }\quad \nabla^2f=\left(-\frac{4}{(n-1)\kappa}-\kappa f\right)g.
\end{equation}

 Integrating by parts and \eqref{eq055} implies that
 $$\int_Mf\Delta|h|^2dv_g=-\int_M|h|^2\left(n\kappa f+\frac{4n}{(n-1)\kappa}\right)dv_g-\int_{\partial M} |h|^2\partial_\nu f.$$
 
 Also, in an analogous way as in the proof of \cite[Theorem 9]{MR2546025},  we have 
 \begin{align*}
     \int_Mfg^{ij}g^{pq}g^{sl}\nabla_ph_{sj}\nabla_lh_{qi}  = &\displaystyle \int_Mg^{ij}g^{pq}g^{sl}(\nabla_p(fh_{sj}\nabla_lh_{qi})-\nabla _pfh_{sj}\nabla_lh_{qi}-fh_{sj}\nabla_p\nabla_lh_{qi})
      \\
       = &\displaystyle \int_Mg^{ij}g^{pq}g^{sl}(-\nabla_l(\nabla_pfh_{sj}h_{qi})+\nabla_l\nabla_pfh_{sj}h_{qi}+\nabla_pf\nabla_lh_{sj}h_{qi}
      \\
      &  -fh_{sj}\nabla_l\nabla_ph_{qi}+g^{mt}fh_{sj}R_{plqt}h_{mi}+g^{mt}fh_{sj}R_{plit}h_{qm}),
 \end{align*}
     where we used integration by parts and that $f\equiv 0$ on $\partial M$ to infer that the integral of the first term in the r.h.s of the first line is equal to zero. Also, we used in the second equality the Ricci identity.
     Using integration by parts, \eqref{eq054} and \eqref{eq055} we get
     \begin{align*}
         \int_M & fg^{ij}  g^{pq}g^{sl}  \nabla_ph_{sj}\nabla_lh_{qi}    =  \displaystyle -\int_{\partial M}g^{ij}g^{pq}\nabla_pfh_{\nu j}h_{qi}-\int_M\left(\kappa f+\frac{4}{(n-1)\kappa}\right)|h|^2
      \\
      &  \displaystyle +\int_M(f\langle h,\nabla \delta h\rangle-(n-1)\kappa f|h|^2+\kappa f(tr_gh)^2-\kappa f|h|^2
      \\
      & + g^{ij}g^{pq}g^{sl}(\nabla_p(f\nabla_lh_{sj}h_{qi})-f\nabla_p\nabla_lh_{sj}h_{qi}-f\nabla_lh_{sj}\nabla_ph_{qi}))
      \\
       = & \displaystyle \int_Mf(\kappa (tr_gh)^2+2\langle h,\delta^*\delta h\rangle-|\delta h|^2)-\int_M\left((n+1)\kappa f+\frac{4}{(n-1)\kappa}\right)|h|^2-\int_{\partial M}\nabla_\nu f|h|^2.
     \end{align*}
 Thus
\begin{equation}\label{eq057}
    \int_Mf(\Delta|h|^2-g^{ij}g^{pq}g^{sl}\nabla_ph_{sj}\nabla_lh_{qi})=\int_M\left(\kappa f-\frac{4}{\kappa}\right)|h|^2+\int_Mf(-\kappa (tr_gh)^2-2\langle h,\delta^*\delta h\rangle+|\delta h|^2).
\end{equation}
 
 Therefore, by Corollary \ref{cor003}, \eqref{eq058}, \eqref{eq056} and \eqref{eq057} we obtain the result.
\end{proof}

\begin{coring}\label{cor002}
Under the same assumption of Theorem \ref{teo003}, if $tr_gh=0$ and $div_gh=0$, then
 $$\begin{array}{rcl}
     V''(0)     & = & \displaystyle \int_Mf\left(\frac{1}{8(n-2)^2}\left| \Delta_gh-2\kappa h\right|^2 +\frac{\kappa^2}{8}|h|^2+\frac{\kappa}{16} |\nabla h|^2\right).
\end{array}$$
\end{coring}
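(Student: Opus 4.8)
The plan is to obtain this corollary as a direct specialization of the general second-variation formula proved in Theorem \ref{teo003}. The two hypotheses $tr_g h = 0$ and $div_g h = 0$ say precisely that $h \in S_{2,g}^{TT}(M)$; since the paper uses the convention $\delta = -\,div$, the divergence-free condition is equivalent to $\delta h = 0$. First I would substitute these two identities into the long displayed expression for $V''(0)$ in Theorem \ref{teo003} and then simply collect the terms that survive. No new geometric input is needed: the entire content is algebraic cancellation, so the argument should reduce to careful bookkeeping.

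Concretely, under $tr_g h = 0$ every quantity built from the trace of $h$ vanishes, namely $\tfrac14 (tr_g h)^2$, $\nabla^2 tr_g h$, $\Delta tr_g h$, $\nabla tr_g h$, and hence the term $-\tfrac38 \kappa^2 (tr_g h)^2$. Under $\delta h = 0$ one has $\delta^*\delta h = 0$, $\delta^2 h = \delta(\delta h) = 0$, and $|\delta h|^2 = 0$; consequently the summand $-\tfrac{\kappa}{4}|\delta h|^2$, the inner-product term $-\kappa\langle h, \tfrac{5n^2-14n+14}{4(n-2)^2}\delta^*\delta h + \tfrac{n^2-3n+3}{4(n-2)^2}\nabla^2 tr_g h\rangle$, and the combination $\delta(h\circ(\delta h + \tfrac12\nabla tr_g h)) + \langle \delta h + \tfrac12\nabla tr_g h, \tfrac12\nabla tr_g h\rangle$ all drop out. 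The scalar factor $\Delta tr_g h - \delta^2 h + \kappa(n-1) tr_g h$ is identically zero, so the term with coefficient $-\tfrac{n}{8(n-1)(n-2)^2}$ disappears, and $|\delta h + \tfrac12\nabla tr_g h|^2 = 0$ kills the summand with coefficient $\tfrac{n^2+2n-2}{4(n-2)^2}\kappa$.

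After these cancellations the first squared norm collapses from $|\Delta h + \nabla^2 tr_g h + 2\delta^*\delta h + 2\kappa((tr_g h)g - h)|^2$ to $|\Delta_g h - 2\kappa h|^2$, and the only persistent terms are those carrying the coefficients $\tfrac{1}{8(n-2)^2}$, $\tfrac{\kappa^2}{8}$ and $\tfrac{\kappa}{16}$, which reproduces exactly the claimed expression for $V''(0)$. I do not expect any genuine obstacle here; the one point worth checking carefully is the sign convention $\delta = -\,div$, so that $div_g h = 0$ is correctly read as $\delta h = 0$, together with the verification that the three surviving terms $|\Delta_g h - 2\kappa h|^2$, $|h|^2$ and $|\nabla h|^2$ involve only $h$ and its covariant derivatives and therefore conceal no further hidden trace or divergence of $h$. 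Thus the result follows immediately from Theorem \ref{teo003} by imposing $h \in S_{2,g}^{TT}(M)$.
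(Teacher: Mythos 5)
Your proposal is correct and coincides with what the paper does: Corollary \ref{cor002} is stated without proof precisely because it is the direct specialization of Theorem \ref{teo003} to $tr_g h=0$ and $\delta h=0$, under which every trace and divergence term drops out and only the three displayed summands survive. Your bookkeeping of the cancellations, including the reading of $div_g h=0$ as $\delta h=0$ and hence $\delta^*\delta h=0$, is accurate.
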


Now  Theorem \ref{theorem-D} is a consequence of the following result.
\begin{theorem}\label{local_minimum}

Let $(\Omega,g)$ be a geodesic ball  with compact closure in $\mathbb{S}^n_+$ and $\mathbb{H}^n.$   Let $\gamma=g|_{T\partial \Omega}$ and $K$ be the constant equal the $\sigma_2$-curvature of $g.$  Suppose $\{g(t)\}$ is a smooth path of metrics in $\mathcal M_{
\gamma}$ with $g(0)=g$ and  $g^{\prime}(0)=h$.  Let $V(t)$ denote the volume of $(\Omega,g(t))$.
\begin{itemize}
\item[a)]If $\Omega\subset\mathbb{S}_+^{n}$, then $V''(0)>0$
for any $h$ satisfying
$
\operatorname{div}_{g} h=0$ and $\operatorname{tr}_{g} h=0.
$

\item[b)]  If $\Omega\subset \mathbb{H}^n,$ then for any point $p\in \Omega$ there exists a geodesic ball  with radius $\delta$ centered at $p$, denoted by $B(p,\delta),$ depending on $p$ and $\Omega$ such that $V''(0)>0$
for any $h$ which has compact support in $B(p,\delta)\subset (\Omega,g)$ and   satisfying
$
\operatorname{div}_{g} h=0 $ and $\operatorname{tr}_{g} h=0.
$
\end{itemize}
\end{theorem}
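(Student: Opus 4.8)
The plan is to derive both statements from Corollary \ref{cor002}, so the first task is to check that its hypotheses hold and to pin down the potential $f$. Because $g$ has constant sectional curvature it is Einstein with $\mathring{Ric}_g\equiv 0$; hence Proposition \ref{proposition001} gives $\Lambda_g(h)=0$ for every transverse-traceless $h$, so such an $h$ is a genuine tangent direction to $\mathcal M^K_\gamma$ and Theorem \ref{teo003} applies. The needed potential, solving $\Lambda_g^*(f)=g$ with $f|_{\partial\Omega}=0$, is exactly the one written down in Example \ref{geodsphere} for $\kappa=1$ and in Example \ref{geodhiperb} for $\kappa=-1$. Imposing $\operatorname{tr}_gh=0$ and $\operatorname{div}_gh=0$ in Corollary \ref{cor002} then collapses the second variation to
\[
V''(0)=\int_\Omega f\left(\frac{1}{8(n-2)^2}\bigl|\Delta_g h-2\kappa h\bigr|^2+\frac{\kappa^2}{8}|h|^2+\frac{\kappa}{16}|\nabla h|^2\right),
\]
and everything reduces to the sign of this weighted integral, where one checks $|\Delta_g h-2\kappa h|^2=|\Delta_E^g h|^2$ on TT tensors since $\mathring R(h)=-\kappa h$ there.

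For part (a) the estimate is immediate. On $\Omega\subset\mathbb S^n_+$ one has $\kappa=1$, and since $r<R<\pi/2$ the potential $f=\tfrac{4}{n-1}\bigl(\tfrac{\cos r}{\cos R}-1\bigr)$ is strictly positive in the interior and vanishes on $\partial\Omega$. All three terms in the integrand are then nonnegative, and the middle term $\tfrac18|h|^2$ is strictly positive wherever $h\neq0$; integrating against $f>0$ yields $V''(0)\ge\tfrac18\int_\Omega f|h|^2>0$ for every nontrivial TT direction. No localization is required here, which is why part (a) holds on the whole ball.

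The substantive case is (b), and the obstruction is precisely the first-order term: for $\Omega\subset\mathbb H^n$ one has $\kappa=-1$, so $\tfrac{\kappa}{16}|\nabla h|^2$ enters with the unfavorable sign and the integrand is no longer pointwise of one sign. My plan is to restrict to $h$ supported in a small geodesic ball $B(p,\delta)$ and to dominate the gradient term by the leading second-order term $|\Delta_E^g h|^2$. The tool is a scale-dependent spectral estimate: for tensors with compact support in $B(p,\delta)$ the first Dirichlet eigenvalue of the rough Laplacian grows like $\delta^{-2}$, so a Bochner computation combined with a Poincaré inequality gives $\int|\Delta_E^g h|^2\ge \Lambda(\delta)\int|\nabla h|^2$ with $\Lambda(\delta)\to\infty$ as $\delta\to0$. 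One then chooses $\delta$ small, calibrated against the explicit potential whose sign and Hessian are controlled by \eqref{eq055}, so that the second-order contribution overrides the gradient term throughout $B(p,\delta)$; this is exactly why $\delta$ must depend on $p$ and on $(\Omega,g)$. The hard part is precisely this interplay between the indefinite sign of the gradient term and the sign of $f$ on the ball: the whole point of passing to a small ball is to convert the scale-dependent spectral gap of $\Delta_E^g$ into effective control that dictates the sign of the weighted integral, and verifying that the tuning of $\delta$ against $f$ can indeed be carried out uniformly is where the delicate bookkeeping lies.
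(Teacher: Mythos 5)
Your proposal is correct and follows essentially the same route as the paper: part (a) is the pointwise positivity of the integrand of Corollary \ref{cor002} against the explicit positive potential from Example \ref{geodsphere}, and part (b) is the same localization to a small ball where the blow-up of the first Dirichlet eigenvalue of the rough Laplacian overcomes the indefinite gradient term. The only (cosmetic) difference is in the bookkeeping for (b): the paper freezes the potential via $2\min_{B(p,\delta)}f\ge f(p)\ge\frac{1}{2}\max_{B(p,\delta)}f$, integrates by parts to replace $-|\nabla h|^2$ by $\langle h,\Delta_g h\rangle$, and reduces everything to a quadratic in $\lambda_1(B(p,\delta))$ multiplying $\int|h|^2$, rather than bounding $\int|\Delta_E^g h|^2$ below by a multiple of $\int|\nabla h|^2$ as you sketch.
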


\begin{proof}
Suppose that $\Omega$ is a geodesic ball of $\mathbb{S}^n_+,$ then 
 $$\begin{array}{rcl}
     V''(0)     & = & \displaystyle \int_Mf\left(\frac{1}{8(n-2)^2}\left| \Delta_gh-2 h\right|^2 +\frac{1}{8}|h|^2+\frac{1}{16} |\nabla h|^2\right).
\end{array}$$
Since $f>0$ in $\Omega,$ $V''(0)>0.$

Next suppose $\Omega$ is a geodesic ball of $\mathbb{H}^n.$ Then 
 $$\begin{array}{rcl}
     V''(0)     & = & \displaystyle \int_Mf\left(\frac{1}{8(n-2)^2}\left| \Delta_gh+2 h\right|^2 +\frac{1}{8}|h|^2-\frac{1}{16} |\nabla h|^2\right).
\end{array}$$
Given $p \in \Omega$, we can find  $\delta>0$ such that $B(p,\delta) \subset \Omega$ satisfies 
$$
2\min _{B(p,\delta)} f \geq  f(p)\quad\mbox{ and } \quad \frac{1}{2}\max _{B(p,\delta)} f \leq f(p).
$$

Then for a compactly supported  tensor $h$ in $B(p,\delta)$, we have 

\begin{align*}
 V''(0)      \geq & ~ \displaystyle f(p)\int_{B(p,\delta)}\left(\frac{1}{16(n-2)^2}\left| \Delta_gh+2 h\right|^2 +\frac{1}{16}|h|^2-\frac{1}{8} |\nabla_gh|^2\right)\\
      = & ~ \displaystyle f(p)\int_{B(p,\delta)}\left(\frac{1}{16(n-2)^2}\left(| \Delta_gh|^2+4\langle h,\Delta_gh\rangle+4|h|^2\right)\right.\left.+\frac{1}{16}|h|^2+\frac{1}{8}\langle h,\Delta_gh\rangle\right).    
\end{align*}

Thus, 
$$\begin{array}{rcl}
     V''(0)     & \geq& \displaystyle \frac{f(p)}{16(n-2)^2}\Big(\lambda_1(B(p,\delta)^2-(4+2(n-2)^2)\lambda_1(B(p,\delta))+4+(n-2)^2\Big)\left(\int_{B(p,\delta)}|h|^2\right),
\end{array}$$
where $\lambda_1(B(p,\delta))$ is the first eigenvalue of the rough Laplacian on $B(p,\delta)$.

We conclude that  $V^{''}(0)>0$ as $\displaystyle\lim _{\delta \rightarrow 0} \lambda_{1}\left(B(p,\delta)\right)=+\infty$. Hence the result follows for a sufficiently smaller $\delta.$
\end{proof}

\begin{proof}[Proof of Theorem \ref{theorem-D}]

As in the proof of Theorem \ref{main},  we can find a family of metrics $\tilde g=e^{2u(t)}(g_0+th)$ with  constant $\sigma_2$-curvature equal to $K$, and if $v=\left.\frac{\partial u}{\partial t}\right|_{t=0}$, then we have 
\begin{equation}\label{eq005}
\left\{\begin{array}{rcll}
         \displaystyle \mathcal T_{g_0}(v)  & = &\displaystyle \frac{1}{2}\sigma''_2(0)& \mbox{ in }\Omega\\
         v & = & 0 & \mbox{ on }\partial \Omega.
    \end{array}\right.
\end{equation}
If $h$ satisfies $h|_{T\partial \Omega}=0,$ 
$
\operatorname{div}_{g} h=0$ and $\operatorname{tr}_{g} h=0,
$
then $\sigma''_2(0)=0.$ Since in a space form the first Dirichlet eigenvalue of $\mathcal T_g(u)$ is positive, the result follows from Theorem \ref{local_minimum} and the existence of  trace free and divergence free  symmetric 2-tensors with prescribed compact support on space forms (see Appendix \cite{MR2546025} or \cite{corvino2007existence}). 
\end{proof}

In the setting of critical metrics of the volume functional with constant scalar curvature, P. Miao and L.F. Tam \cite{MR2546025} proved  the existence of  deformations along which the volume of the standard metric is a strict local maximum. For that, they use a limit argument where the limit metric has zero scalar curvature (in fact in the Euclidean space). This fact was essential to show  further the  nonexistence  of a  global volume minimizer in 
$$
 \mathcal M_{\gamma}^0=\{g\in \mathcal M\mid R(g)=0\quad\mbox{and}\quad g|_{T\partial M}=\gamma\}.
$$
We observe that using the same technique an analogue of this fact in our context   is not possible  since   $\sigma_2$-critical metrics  do not have models in the Euclidean space (see Section \ref{space form}).  

\vspace{0.5cm}

\noindent{\bf Conflict of interest statement}

\vspace{0.5cm}

The authors declare that there is no conflict of interest regarding the publication of this article.

\bibliography{main-ref.bib}
\bibliographystyle{acm}

\end{document}